\numberwithin{equation}{section}
\newtheorem{theorem}{Theorem}[section]
\newtheorem{lemma}[theorem]{Lemma}
\newtheorem{proposition}[theorem]{Proposition}
\newtheorem{corollary}[theorem]{Corollary}
\newtheorem{assumption}{Assumption}
\newtheorem{claim}{Claim}
\theoremstyle{definition}
\newtheorem{definition}{Definition}[section]
\theoremstyle{remark}
\newtheorem{remark}[theorem]{Remark}
\def\dd{{\rm d}}
\def\weight(#1,#2){c_{#1,#2}}
\newcommand{\norm}[1]{\left\lVert#1\right\rVert}
\newcommand\findem{\hfill{$\blacksquare$}\medskip}
\def\ph{\hat{p}}
\def\uh{\hat{u}}
\def\vh{\hat{v}}
\def\wh{\hat{w}}
\def\xh{\hat{x}}
\newcommand\Ih{\hat{I}}
\newcommand\Th{\hat{T}}
\newcommand\Wh{\hat{W}}
\def\hb{\bar{h}}
\def\pb{\bar{p}}
\def\ub{\bar{u}}
\def\vb{\bar{v}}
\def\wb{\bar{w}}
\def\xb{\bar{x}}
\def\yb{\bar{y}}
\def\ut{\tilde{u}}
\def\wt{\tilde{w}}
\def\St{\tilde{S}}
\def\calc{{\mathcal C}}
\def\cals{{\mathcal S}}
\def\cR{{\mathcal R}}
\def\C{\mathcal{C}}
\def\F{\mathcal{F}}
\def\G{\mathcal{G}}
\def\H{\mathcal{H}}
\def\I{\mathcal{I}}
\def\J{\mathcal{J}}
\def\L{\mathcal{L}}
\def\P{\mathcal{P}}
\def\U{\mathcal{U}}
\def\V{\mathcal{V}}
\def\W{\mathcal{W}}
\def\X{\mathcal{X}}
\def\eps{\varepsilon}
\def\nuh{\hat{\nu}}
\def\chib{\bar{\chi}}
\def\nub{\bar{\nu}}
\def\xib{{\bar\xi}}
\def\1B{{\bf  1}}
\def\min{\mathop{\rm min}}
\def\max{\mathop{\rm max}}
\def\half{\mbox{$\frac{1}{2}$}}
\def\1B{{\bf  1}}
\def\cR{\mathbb{R}}
\newcommand{\bbR}{{\mathbb R}}
\newcommand\be{\begin{equation}}
\newcommand\ee{\end{equation}}
\newcommand\ba{\begin{array}}
\newcommand\ea{\end{array}}
\newcommand{\bea}{\begin{eqnarray}}
\newcommand{\eea}{\end{eqnarray}}
\newcommand{\bean}{\begin{eqnarray*}}
\newcommand{\eean}{\end{eqnarray*}}
\def\disp{\displaystyle}
\title[The shooting algorithm for partially control-affine problems]{The shooting algorithm for partially control-affine problems with application to an SIRS epidemiological model}
\author{M.S. Aronna}
\address{Escola de Matem\'atica Aplicada FGV EMAp, Funda\c{c}\~ao Getulio Vargas, Rio de Janeiro, Brazil}
\email{soledad.aronna@fgv.br, \href{https://sites.google.com/view/aronna/home}{https://sites.google.com/view/aronna/home} }
\author{J.M. Machado}
\address{Universit\'e Paris-Saclay, Orsay, France}
\email{joao-miguel.machado@u-psud.fr}
\thanks{$^*$ The first author was supported by FAPERJ, CNPq and CAPES (Brazil) and by the Alexander von Humboldt Foundation (Germany). The second author was supported by CAPES (Brazil) and Fondation de Math\'ematiques Jacques Hadamard.}
\begin{document}

\begin{abstract}
	In this article we propose a shooting algorithm for partially-affine optimal control problems, this is, systems in which the controls appear both linearly and nonlinearly in the dynamics. Since the shooting system generally has more equations than unknowns, the algorithm relies on the Gauss-Newton method. As a consequence, the convergence is locally quadratic provided that the derivative of the shooting function is injective and Lipschitz continuous at the optimal solution. We provide a proof of the convergence for the proposed algorithm using recently developed second order sufficient conditions for weak optimality of partially-affine problems. We illustrate the applicability of the algorithm by solving an optimal treatment-vaccination epidemiological problem.
\end{abstract}
\maketitle

\section{Introduction}
\label{intro}
In this article we propose and study the convergence of a shooting algorithm for the numerical solution of optimal control problems governed by equations of the form
\begin{equation}
\label{eqintro}
	\dot{x}(t) = f_0(x(t), u(t)) + \sum_{i = 1}^{m}v_i(t)f_i(x(t), u(t)), \quad \text{ a.e. on $[0,T]$.}
\end{equation}
Note that when $m=0$ then a nonlinear control system arises and when the $f_i$'s do not depend on $u$, for all $i=0,\dots,m$,  then the resulting system is {\em control-affine} (we will call the latter {\em totally control-affine} to differentiate them from {\em partially control-affine systems}). In this article, however, we are particularly interested in the case where both $m$ and the dimension $l$ of $u$ are positive and then the two types of control appear.

This study is motivated by many models that emerge in practice in which the associated system is partially control-affine. Among them we can cite the followings: the Goddard's problem proposed in \cite{Goddard} and analyzed in Bonnans {\em et al.} \cite{martinon2009numerical}, other models for rocket motion studied in Lawden \cite{Law63}, Bell and Jacobson \cite{BelJac}, Goh \cite{GohThesis,goh2008optimal}, Oberle \cite{Obe77}, Azimov \cite{Azi05} and Hull \cite{Hul11}, an optimal hydrothermal electricity production problem investigated in Bortolossi {\em et al.} \cite{BPT02}, a problem of atmospheric flight considered by Oberle in \cite{oberle90}, and an optimal production process in Cho {\em et al.} \cite{ChoAbaPar93} and Maurer {\em et al.} \cite{MauKimVos05}. Regarding applications, in this article we analyse, in particular, an epidemiological model inspired from Ledzewicz and Sch\"attler \cite{ledzewicz2011optimal}, with treatment and vaccination as control policies (see Section \ref{SIRS_OC}).

For optimal control problems subject to the dynamics \eqref{eqintro}, with endpoint and control constraints, we propose a shooting algorithm and show that its local convergence is guaranteed if second order sufficient optimality conditions (established in Aronna \cite{Aronna2018}) hold. These second order conditions are written in terms of the second derivative of the associated Lagrangian function and are an extension of results proved in Dmitruk \cite{Dmi77} for control-affine systems. It is worth mentioning that these conditions rely on Goh transform \cite{goh1966second}. More details, references and timeline for second order conditions for partially control-affine and (totally) control-affine problems can be found in {\em e.g.} Aronna \cite{Aronna2018} and Aronna {\em et al.} \cite{ABDL12}, respectively.

Shooting-like methods applied to the numerical solution of partially control-affine problems can be found in Oberle \cite{OberleThesis, oberle90} and Oberle-Taubert \cite{ObeTau97}, where a generalization of the algorithm proposed by Maurer \cite{Mau76} for (totally) affine systems is given.
These works present  practical implementations of shooting algorithms, but they do not deal with the issue of convergence through optimality conditions.

\if{
The subject of second order optimality conditions for these partially affine problems have been studied by Goh in \cite{Goh66a,Goh67,GohThesis,Goh08}, Dmitruk in \cite{Dmi11}, Dmitruk and Shishov in \cite{DmiShi10}, Bernstein and Zeidan \cite{BerZei90},  and Maurer and Osmolovskii \cite{MauOsm09}.
The first works were by Goh, who introduced a change of variables in \cite{Goh66a} and used it to obtain optimality conditions in \cite{Goh66a,Goh66,GohThesis}, always assuming uniqueness of the multiplier. The necessary conditions we present imply those by Goh \cite{Goh66}, when there is only one multiplier.
Recently, Dmitruk and Shishov \cite{DmiShi10} analysed the  quadratic functional associated with the second variation of the Lagrangian function and provided a set of necessary conditions for the nonnegativity of this quadratic functional. Their results are consequence of a second order necessary condition that we present.
In  \cite{Dmi11}, Dmitruk proposed, without proof, necessary and sufficient conditions for a problem having a particular structure: the affine control variable applies to a term depending only on the state variable, i.e. the affine and nonlinear controls are  `uncoupled'. This hypothesis is not used in our work.
The conditions established here coincide with those suggested in Dmitruk \cite{Dmi11}, when the latter are applicable.
In \cite{BerZei90}, Bernstein and Zeidan derived a Riccati equation for the singular linear-quadratic regulator, which is a modification of the classical linear-quadratic regulator where only some components of the control enter quadratically in the cost function.
All of these articles use Goh's Transformation to derive their conditions; we use this transformation as well.
On the other hand, in \cite{MauOsm09}, Maurer and Osmolovskii gave a sufficient condition for a class of problems having one affine control 
subject to bounds and such that it is bang-bang at the optimal solution. This structure is not studied here since no control constraints are considered, i.e. our optimal control is suppose to be totally singular.

Regarding second order optimality conditions, we provide a pair of necessary and sufficient conditions for weak optimality of totally singular solutions. These conditions are `no gap' in the sense that  the sufficient condition is obtained from the necessary one by strengthening an inequality. We do not assume uniqueness of multiplier. When our second order conditions are applied to the particular cases having either $m=0$ or $l=0,$ they give already existing results as we point out along the article.

Among the applications of the shooting method to the numerical solution of partially affine problems we can mention the articles Oberle \cite{OberleThesis,oberle90} and Oberle-Taubert \cite{ObeTau97}. In these articles the authors use a generalization of the algorithm that Maurer \cite{Mau76} suggested for totally affine systems.
These works present interesting implementations of a shooting-like method to solve partially affine control problems having bang-singular or bang-bang solutions and, in some cases, running-state constraints are considered. No result on convergence is given in these articles. 

In this paper we propose a shooting algorithm which can be also used to solve problems with bounds on the controls. Our algorithm is an extension of the method for totally affine problems presented in Aronna et al. \cite{ABM11}.
We give a theoretical support to this method by showing that the second order sufficient condition above-mentioned ensures the local quadratic convergence of the algorithm.
}\fi

The article is organized as follows. In Section \ref{problem_statement} we give the statement of the problem,  the main definitions and assumptions, and state the first order optimality conditions. The differential-algebraic system (DAE) derived from the first order conditions is deduced and analized in Section \ref{differential_algebraic}, while the shooting algorithm used to solve this DAE is described in Section \ref{shooting_algorithm}. In Section \ref{second_order} we recall second order necessary conditions, and we state the main result of the article on convergence of the shooting algorithm in Section \ref{converge_shooting}. In Section \ref{simpler_control_constraints} we extend our analysis to problems with control constraints by means of an auxiliary unconstrained transformed problem. In Section \ref{Section_examples} we work out examples and solve them numerically.

\vspace{0.5cm}
\textbf{Notations.}
Throughout the text we shall omit the arguments of some functions  whenever the context is clear, {\em e.g.} the time dependence is frequently omitted. If $h$ is a function of time and some other variables, {\em i.e.} $h = h(t,x)$, the time derivative is frequently referred as $\dot{h}$. For partial derivatives with respect to other variables we write $D_xh,$ $h_x$ or $h_{x_i}$ if $x_i$ is a component of $x$. The same convention is adopted for higher-order derivatives. Given two differentiable vector fields $g,h:\mathbb{R}^n \to \mathbb{R}^n$, the {\em Lie bracket} between them is defined by
\begin{equation}
\label{Lie.Brackets}
[g,h] := D_x h(x)g(x) - D_x g(x)h(x).
\end{equation}
We use the same notation for functions depending on $u$ and $v$ as well; nevertheless, the derivatives are always taken w.r.t. $x$.

By $\mathbb{R}^k$ we denote the $k$-dimensional Euclidean real space, {\em i.e.} the space of $k$-dimensional column vectors with the usual euclidean norm; and by $\mathbb{R}^{k,*}$ its dual space consisting of $k$-dimensional row vectors. $\mathbb{B}$ denotes the open unitary ball of $\mathbb{R}^k$. By $L^p([0,T]; \mathbb{R}^k)$ we mean the Lebesgue space of functions with domain being the interval $[0,T]$ and taking values in $\mathbb{R}^k$; while $W^{q,s}([0,T]; \mathbb{R}^k)$ denotes the Sobolev spaces.

\section{Statement of the Problem and Assumptions}
\label{problem_statement}
We start with the control-unconstrained setting, the control-constrained case being left for Section \ref{simpler_control_constraints}. 
Considering the function spaces 
$\U := L^{\infty}([0,T];\bbR^l)$, 
$\V := L^{\infty}([0,T];\bbR^m)$ and
$\X := W^{1, \infty}([0,T]; \bbR^n)$, we define the optimal control problem in the Mayer form
\begin{align}
	 \label{objectivefunction}
	 \text{ minimize }&
	 \phi(x(0),x(T)) \\
	 \text{subject to} & \nonumber \\
	\label{state.dynamics}
	 &  \dot{x}(t) = f(x(t), u(t), v(t)), \ \ \text{a.e. on $[0,T]$},\\
	 \label{initial-final.constraints}
	 &  \eta_j(x(0),x(T)) = 0, \ \ \text{for $ j = 1, \cdots, d_{\eta}$}.
\end{align}
We let (OC) denote problem \eqref{objectivefunction}-\eqref{initial-final.constraints}, where $\phi \colon \bbR^{2n} \to \bbR$,   $\eta_j\colon \bbR^{2n} \to \bbR$, for $j = 1, \dots, d_{\eta}$, and the dynamics $f:\mathbb{R}^{n+l+m}\to \mathbb{R}^n$ is of the form
\begin{equation}
	 f(x, u, v) := f_0(x,u)+\sum_{i=1}^m v_if_i(x,u).
\end{equation}
We make the following assumption for the aforementioned functions.
\begin{assumption}
	\label{datafunctions.lipschitz}
	All data functions $f_0, f_1, \cdots, f_m$, $\eta$ and $\phi$ have Lipschitz continuous second order derivatives.
\end{assumption}

A {\em feasible trajectory} is a tuple $w := (x,u,v) \in \W := \X \times \U \times \V$ that verifies the state dynamics \eqref{state.dynamics} and the initial-final constraints \eqref{initial-final.constraints}. In order to state the {\em Pontryagin Maximum Principle (PMP)}, we consider the {\em costate space} $\X_* := W^{1, \infty}([0,T]; \bbR^{n,*})$. Given an element $\lambda = (\beta, p)\in \bbR^{d_{\eta},*} \times \X_*$, we define the {\em pre-Hamiltonian}
\begin{equation}
\label{Hamiltonian}
	H(x,u,v,p):= p\cdot\left(f_0(x,u)+\sum_{i=1}^m v_if_i(x,u)\right),
\end{equation}  
the {\em endpoint Lagrangian}
\begin{equation}
\label{inicial-finalLagrangian}
	\ell(x_0, x_T,\beta) := \phi(x_0,x_T) + \sum_{j = 1}^{d_{\eta}} \beta_j\eta_j(x_0,x_T),
\end{equation}
and the {\em Lagrangian} function
\begin{multline}
\label{Lagrangian}
		\L(w, \lambda) := \ell(x(0), x(T), \beta) 
		 \\\disp + \int_0^T p(t)\cdot\left(f_0(x(t),u(t))+\sum_{i=1}^m v_i(t)f_i(x(t),u(t)) - \dot{x}(t)\right) \dd t.
\end{multline}

Before stating the PMP, we specify the notion of optimality that will be used.
\begin{definition}[Weak minimum]
	\label{weakminimum}
	A feasible trajectory $\wh = (\xh, \uh, \vh) \in \W$ is said to be a {\em weak minimum} of problem (OC) if, for some $\varepsilon > 0$, it is optimal in the set of feasible trajectories $w = (x,u,v)$ that satisfy
	\begin{equation*}
		\norm{ x- \xh }_{\infty} + \norm{u - \uh }_{\infty} + \norm{v - \vh}_{\infty} < \varepsilon.
	\end{equation*}
\end{definition}

For the reminder of the article we shall fix a nominal feasible trajectory $\wh = (\xh, \uh, \vh)$ for which optimality conditions will be given. Whenever the arguments of a function are omitted, we mean that it is evaluated at such trajectory. For a proof of the Pontrygin's Principle we refer the reader to the original work from Pontryagin \cite{pontryagin1962mathematical} or the more recent monographs \cite{schattler2012geometric,vinter2010optimal}. 

\begin{theorem}[Pontryagin's Maximum Principle]
	\label{PMP}
	If $\wh = (\xh, \uh, \vh)$ is a weak minimum of (OC), then there exists a multiplier $\lambda = (\beta, p) \in  \bbR^{d_{\eta},*} \times \X_*$, satisfying the costate dynamics:
		\begin{equation}
		\label{costate_dynamics}
			\dot{p} = -D_xH(\wh, p), \quad a.e. \ on \ [0,T];	
		\end{equation}
	the transversality conditions:
		\begin{equation}
		\label{transversality.conditions}
			\begin{split}
				p(0) &= -D_{x_0}\ell(\xh(0),\xh(T),\beta),\\
				p(T) &= D_{x_T}\ell(\xh(0),\xh(T),\beta),
			\end{split}
		\end{equation}
	and the stationarity of the Hamiltonian
	\begin{equation}
	\label{stationarity.Hamiltonian}
		D_uH(\wh,p) = 0 \textnormal{ and } \ D_vH(\wh,p) = 0,  \quad a.e. \ on \ [0,T]. 
	\end{equation}	
\end{theorem}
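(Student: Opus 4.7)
The statement is a weak-minimum version of Pontryagin's principle for a system with equality endpoint constraints and free (unconstrained) controls, so the natural route is to recast it as an equality-constrained minimization in Banach spaces and apply the abstract Lagrange multiplier rule (Lyusternik/Zowe--Kurcyusz), rather than the stronger needle-variation machinery required for a strong minimum. Concretely, I would introduce the constraint operator
\[
G \colon \W \to L^{\infty}([0,T];\bbR^{n}) \times \bbR^{d_{\eta}}, \qquad
G(w) := \bigl(\dot{x} - f(x,u,v),\; \eta(x(0),x(T))\bigr),
\]
and write (OC) as minimizing the smooth cost $J(w) := \phi(x(0),x(T))$ on $\{w \in \W : G(w) = 0\}$ locally around $\wh$. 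Under Assumption~\ref{datafunctions.lipschitz}, both $J$ and $G$ are $C^{1}$ on $\W$ endowed with the $L^{\infty}$ norm on each factor.

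The core technical step is to verify a constraint qualification that yields a \emph{normal} multiplier (so that no cost multiplier $\alpha_0$ is needed, matching the form of \eqref{transversality.conditions}). This amounts to showing that $DG(\wh) \colon \W \to L^{\infty}([0,T];\bbR^{n}) \times \bbR^{d_{\eta}}$ has closed range of finite codimension; the surjectivity onto the first factor is classical and follows from the controllability of the linearized state equation using variations of $u$ (the system $\dot{\delta x} = f_x \delta x + f_u \delta u + f_v \delta v$ can match any $L^{\infty}$ residual by solving an initial value problem), while the finite-dimensional part contributes at most $d_{\eta}$ directions lost on $\bbR^{d_{\eta}}$. If full normality fails, the standard abstract multiplier rule still produces a nontrivial pair $(\alpha_0, \beta, p)$ with $\alpha_0 \geq 0$; one then assumes (or argues from the qualification) that $\alpha_0 > 0$ and rescales to $\alpha_0 = 1$, which is the form stated.

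Applying the abstract multiplier rule produces $(\beta, p) \in \bbR^{d_{\eta},*} \times \X_{*}$ such that $D_{w}\L(\wh,\lambda) = 0$ on $\W$. Taking independent variations in $x$, $u$, and $v$ in the expression \eqref{Lagrangian} and performing the single integration by parts
\[
\int_{0}^{T} p(t)\cdot \dot{\delta x}(t)\,\dd t
 = p(T)\cdot \delta x(T) - p(0)\cdot \delta x(0) - \int_{0}^{T} \dot p(t)\cdot \delta x(t)\,\dd t,
\]
one reads off the three conclusions: collecting the coefficients of $\delta x(t)$ inside the integral yields the costate equation \eqref{costate_dynamics}, collecting the boundary coefficients of $\delta x(0)$ and $\delta x(T)$ yields the transversality conditions \eqref{transversality.conditions}, and collecting the coefficients of $\delta u(t)$ and $\delta v(t)$ (which range freely in $L^{\infty}$, since no control constraints are present) yields the stationarity conditions \eqref{stationarity.Hamiltonian} via the fundamental lemma of calculus of variations.

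The main obstacle is the constraint qualification step: one must ensure that the linearized endpoint-to-constraint map has closed range with the correct complement so that the multiplier is normal. Once this is in place, the remaining derivations of \eqref{costate_dynamics}--\eqref{stationarity.Hamiltonian} reduce to book-keeping in the variational equation, exactly as in the cited monographs \cite{schattler2012geometric,vinter2010optimal}, to which I would defer for the detailed verification.
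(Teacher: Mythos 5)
The paper does not prove Theorem~\ref{PMP} at all: it is quoted as a classical result with references to Pontryagin's monograph and to \cite{schattler2012geometric,vinter2010optimal}, so there is no in-paper argument to compare against. Your sketch is the standard and appropriate route for this \emph{weak}-minimum version of the principle (which asserts only stationarity $D_uH=D_vH=0$, not the full minimum condition, and hence needs no needle variations): recast (OC) as an equality-constrained problem, invoke the abstract multiplier rule, and read off \eqref{costate_dynamics}--\eqref{stationarity.Hamiltonian} from $D_w\L(\wh,\lambda)=0$ by integration by parts. You also correctly flag that normality (cost multiplier equal to $1$, as implicitly asserted through $\ell=\phi+\sum_j\beta_j\eta_j$) requires a constraint qualification; in the paper this is supplied only \emph{after} the theorem, by Assumption~\ref{qualification.constraints}, which is exactly the surjectivity of the reduced endpoint map and gives both normality and uniqueness of $(\beta,p)$.

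Two technical points in your plan deserve care. First, placing the dynamics constraint in $L^{\infty}([0,T];\bbR^n)$ means the corresponding multiplier a priori lives in $(L^{\infty})^{*}$, which contains finitely additive measures, so some extra work is needed to conclude that it is represented by an absolutely continuous $p\in\X_*$. The cleaner variant (and the one implicit in the paper's Assumption~\ref{qualification.constraints}) is to eliminate the state entirely: parametrize trajectories by $(x(0),u,v)$, so the only remaining constraint is the finite-dimensional map $\hat\eta(x(0),u,v)=\eta(x(0),x(T))\in\bbR^{d_\eta}$, obtain $\beta\in\bbR^{d_\eta,*}$ from the finite-dimensional multiplier rule, \emph{define} $p$ as the solution of the adjoint equation with terminal condition $D_{x_T}\ell$, and then verify \eqref{transversality.conditions} and \eqref{stationarity.Hamiltonian} by computing the derivative of the reduced cost. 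Second, surjectivity of the linearized dynamics onto the first factor does not need controllability via $u$: the map $\delta x\mapsto\dot{\delta x}-f_x\delta x$ with free initial condition is already onto $L^\infty$ by solving a linear ODE. Neither point invalidates your plan; both are routine and are handled in the monographs you (and the paper) cite.
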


An element $\lambda$ that satisfies the PMP for a trajectory $w \in \W$ is called a {\em multiplier} and the pair $(w,\lambda)$ is called an {\em extremal}. For a solution $w$ of (OC), we can, in general, expect a set of multipliers, instead of a single one. This is problematic for the shooting algorithm proposed later in this article, therefore we make the following assumption which guarantees uniqueness of multiplier \cite{pontryagin1962mathematical}. 

\begin{assumption}
	\label{qualification.constraints}
	The derivative of the mapping 
	\begin{equation}
		\begin{split}
			\hat\eta \colon \bbR^n \times \U \times \V &\to \bbR^{d_{\eta}}\\
						(x(0), u, v) &\mapsto \eta(x(0), x(T))
		\end{split}
	\end{equation}
	is onto. Here the vector $x$ is the solution to \eqref{state.dynamics} given the control $(u,v)$ and initial condition $x(0)$. 
\end{assumption}

Throughout the reminder of the article, Assumption \ref{qualification.constraints} shall be assumed without declaration and the trajectory $\hat w$ is supposed to satisfy the first order necessary conditions given by the PMP. Hence, in view of Assumption \ref{qualification.constraints}, $\hat w$ possesses a unique multiplier $\hat \lambda := (\hat \beta, \ph)$. 

\section{The Equivalent Differential-Algebraic System}
\label{differential_algebraic}

The Pontryagin Maximum Principle implies that the optimal state $\xh$ together with the multiplier $\ph$ are solutions of a DAE induced by equations \eqref{state.dynamics}, \eqref{initial-final.constraints}, \eqref{costate_dynamics}, \eqref{transversality.conditions}, and \eqref{stationarity.Hamiltonian}. The next step consists in showing that there exists a representation of the controls as a function of $x$ and $p$, in such way that one can eliminate them and transform the DAE into a {\em two-point boundary value problem} (TPBVP). This can be achieved by using the stationarity of the Hamiltonian along with a suitable strengthened version of the {\em Legendre-Clebsch conditions} and application of the {\em Implicit Function Theorem} (IFT). 
\if{
We use the stationarity of the Hamiltonian, \eqref{stationarity.Hamiltonian}, together with a suitable strengthened version of the Legendre-Clebsch second order necessary condition to write the system in the form
\begin{equation}
\tag{IFTsys}
\label{IFTsys}
	\begin{split}
	\Theta(\xi, \alpha) &= 0,\\
	D_{\alpha}\Theta(\xi, \alpha) &\succ 0.
	\end{split}
\end{equation}
In this general system \eqref{IFTsys}, $\alpha$ and $\xi$ play the role of the controls $(u,v)$ and the tuple $(x, p)$, respectively. After a system as \eqref{IFTsys} is assembled, the {\em Implicit Function Theorem} (IFT) is applied to find a representation of $\alpha$ in terms of $\xi$.}\fi

\subsection{Controls in feedback form}
The conventional Legendre-Clebsch condition assumes the form 
\begin{equation}
	\label{LegendreClebsch.mixed}
	\tag{LC}
	 \left(
	 \begin{array}{cc}
	 	H_{uu}(\wh,\ph) & H_{uv}(\wh,\ph)\\
	 	\\
	 	
	 	H_{vu}(\wh,\ph)
	 	 & H_{vv}(\wh,\ph)
	 \end{array}
	 \right)
	 \succeq 0.
\end{equation}
A proof of \eqref{LegendreClebsch.mixed} for the present setting can be found in Aronna \cite[Corollary 1]{Aronna2018}. Note that, since $H_{vv}(\wh, \ph) \equiv 0$ and $H_{vu} = H_{uv}^T$, condition \eqref{LegendreClebsch.mixed} holds if, and only if 
\begin{equation}
	\label{LegendreClebsch.mixed.equivalent}
	H_{uu}(\wh,\ph) \succeq 0 \text{ and } H_{uv}(\wh,\ph) = 0.
\end{equation}
Since the matrix in \eqref{LegendreClebsch.mixed} is singular we cannot apply the IFT to \eqref{stationarity.Hamiltonian} and obtain our desired representations of the controls. Instead, what one usually does is computing the time derivatives of the {\em switching function} $H_v$ that may depend explicitly on the controls (see {\em e.g.} Bryson and Ho \cite{BrysonHo75}).
In order to simplify the calculations involved in computing these derivatives, we consider a general formula for the time derivative of a product $p \cdot F,$ where $F:\mathbb{R}^n \times \mathbb{R}^m \to \mathbb{R}^n$ is a vector field. Employing the notation of Lie brackets given in the notation paragraph, we get
\begin{equation}
\label{genericvectorfield.D1}
\frac{\dd}{\dd t} \big( \ph \cdot F(\xh, \uh) \big) = \ph \cdot [f_0, F] + \sum_{i = 1}^m \vh_j \ph \cdot [f_i, F] + \ph \cdot D_u F \dot{\uh}.
\end{equation}
We obtain $\dot{H}_{v_i}$ by choosing $F = f_i$. Recalling that $H_{vu}=0$, we get
\begin{equation}
\label{switching.function.D1_preview}
\dot{H}_{v_j}(\wh,\ph) = \ph \cdot [f_0, f_j] + \sum_{i = 1}^m \vh_j \ph \cdot [f_i, f_j].
\end{equation}
As a consequence of the following Proposition \ref{Goh.condition}, equation \eqref{switching.function.D1_preview} does not depend explicitly of the linear control $v$. 
\begin{proposition}[Goh conditions]
	\label{Goh.condition}
	Assume that $\wh$ is a weak minimum. Then the following identities hold
	\begin{equation*}
	\ph \cdot [f_i, f_j] = 0, \quad \text{ for $i,j = 1, \dots, m$}.
	\end{equation*}
\end{proposition}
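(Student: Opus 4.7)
The plan is to derive these identities from the second-order necessary conditions for weak optimality of Aronna~\cite{Aronna2018}, applied after Goh's transformation to the linear control $v$. The classical Legendre-Clebsch condition is silent here because $H_{vv}\equiv 0$; moreover, time-differentiating the stationarity identity $H_v\equiv 0$ combined with the equivalent Legendre-Clebsch condition $H_{uv}=0$ from \eqref{LegendreClebsch.mixed.equivalent} yields only the $m$ relations made explicit in \eqref{switching.function.D1_preview}, which constitute $m$ linear equations in the $m^2$ scalars $\ph\cdot[f_i,f_j]$ and are therefore insufficient by themselves.

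First I would consider a smooth test perturbation $\delta v\in C_c^{\infty}((0,T);\bbR^m)$ with $\int_0^T \delta v(t)\,dt=0$, set $y(t):=\int_0^t \delta v(s)\,ds$ so that $y(0)=y(T)=0$ and $\dot y=\delta v$, take $\delta u=0$, and let $\delta x$ be the induced linearized state. Substitute $\delta v=\dot y$ into the second variation of $\L$ at the extremal $(\wh,\hat\lambda)$ and perform Goh's change of variables $\xi:=\delta x-\sum_{i=1}^{m} f_i(\xh,\uh)\,y_i$, integrating by parts to eliminate every $\dot y$ (boundary contributions cancel because $y(0)=y(T)=0$). The identity $H_{uv}(\wh,\ph)=0$ suppresses every instantaneous cross term $\delta u\cdot y$ that would otherwise appear, so the partially control-affine situation collapses to the totally control-affine one treated by Dmitruk~\cite{Dmi77}, and the resulting quadratic form on $(\xi,y)$ contains as its only nontrivial $y$-bilinear contribution the antisymmetric integral
\begin{equation*}
\sum_{1\le i<j\le m}\int_0^T \bigl(y_i(t)\dot y_j(t)-y_j(t)\dot y_i(t)\bigr)\,\ph(t)\cdot[f_i,f_j](\xh(t),\uh(t))\,dt,
\end{equation*}
made manifest by the antisymmetry of the Lie bracket.

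Second, the necessary second-order condition forces this quadratic form to be nonnegative on its critical cone. For each fixed pair $i<j$ and for a.e.\ $t_0\in(0,T)$, I would construct $y$ highly oscillating and supported in a shrinking window around $t_0$ so that the antisymmetric bilinear $\int (y_i\dot y_j-y_j\dot y_i)\,dt$ can be arranged to take either sign while the coefficient $\ph\cdot[f_i,f_j]$ is essentially frozen at its value at $t_0$, and the remaining $\xi$-dependent contributions are of lower order. Nonnegativity then forces $\ph(t_0)\cdot[f_i,f_j](\xh(t_0),\uh(t_0))=0$ for a.e.\ $t_0$; the diagonal case $i=j$ is trivial since $[f_i,f_i]=0$.

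The main obstacle is the careful bookkeeping of the Goh-transformed second variation: one must track the numerous quadratic and bilinear terms produced by the change of variables and the integration by parts, verify that the only bilinear-in-$y$ contribution that cannot be absorbed by $\xi$- and $\delta u$-terms is the antisymmetric one displayed above, and make precise the concentration-of-perturbation argument. This is the technical backbone of the second-order theory developed in \cite{Aronna2018,Dmi77}, on which the present proof leans heavily.
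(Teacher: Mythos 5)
The paper does not prove this proposition at all: it is imported by citation, from Goh's original paper and from \cite[Cor.~5.2]{Aronna2018}, as a corollary of the second-order necessary conditions. Your sketch reconstructs exactly the classical argument that underlies those references --- Goh transformation of the second variation along directions with $\ub=0$, isolation of the antisymmetric bilinear term $\int (y_i\dot y_j-y_j\dot y_i)\,\ph\cdot[f_i,f_j]\,\dd t$, and concentrated oscillating variations to force its coefficient to vanish pointwise --- so in spirit you are proving the statement the way the cited sources do, rather than taking a different route.

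There is, however, one concrete step in your plan that does not work as written. A variation $(\xb,0,\vb)$ with $\xb(0)=0$ and $\vb=\dot y$ compactly supported is in general \emph{not} in the critical cone $\C_2$: although $y(T)=0$, the induced $\xb(T)=\bar\xi(T)$ is typically nonzero (it is only $O(\|y\|_{L^1})$), so the linearized endpoint constraints \eqref{endpoint.linearized.constraints} fail, and Theorem \ref{SONC.theorem.C2} cannot be invoked directly. The standard repair is to use the surjectivity in Assumption \ref{qualification.constraints} to add a correction variation of size $O(\|y\|_{L^1})=O(\eps)$ restoring \eqref{endpoint.linearized.constraints}, and then to check that all cross terms it generates in $\Omega$ are $o(1)$ relative to the $O(1)$ antisymmetric term under your concentration scaling. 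Without this correction (or some equivalent device, such as working with the full set of multipliers as in \cite{Aronna2018}), the nonnegativity of $\Omega$ on your test directions is not justified. With that ingredient added, and the scaling bookkeeping you defer to carried out, the argument is sound; the diagonal case $i=j$ is of course trivial as you note.
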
 

Proposition \ref{Goh.condition} was proposed and proved by Goh \cite{Goh66}. A generalization that applies to the framework of the current paper was given by Aronna in \cite[Cor. 5.2]{Aronna2018} as a corollary of second order necessary conditions for optimality when the set of multipliers is a singleton (see also \cite{ABDL12} and \cite{frankowska2013pointwise}). In view of Proposition \ref{Goh.condition}, equation \eqref{switching.function.D1_preview} reduces to
\begin{equation}
\label{switching.function.D1}
\dot{H}_{v_i}(\wh,\ph) = \ph \cdot [f_0, f_i].
\end{equation}
By derivating the latter equation once more w.r.t. time, we obtain
\begin{equation}
\label{hamiltonian_2timedev}
	\ddot{H}_{v_i} = \ph \cdot \left[f_0,  [f_0, f_i]\right]+\sum_{j = 1}^m \vh_j \ph \cdot \left[f_j,  [f_0, f_i]\right] + \ph \cdot D_u[f_0, f_i]\dot{\uh}.
\end{equation} 
We aim at removing the dependence on $\dot{\uh}$ from \eqref{hamiltonian_2timedev}. This can be done by using the stationarity condition $H_u(\wh,\ph) = 0$. Assuming enough regularity, the total time derivative of this expression gives
\begin{equation}
\label{H_udot}
\dot{H}_u(\wh,\ph) = H_{ux} \dot{\xh} + H_{up} \dot{\ph} + H_{uu} \dot{\uh} = 0,
\end{equation}
where the term $H_{uv}\dot{v}$ vanishes in view of \eqref{LegendreClebsch.mixed.equivalent}. To make \eqref{H_udot} more rigorous, we make the following assumption on the controls.
\begin{assumption}[Regularity of the controls]
	\label{regularity.controls}
	The nonlinear control $\uh$ is continuously differentiable and the linear control $\vh$ is continuous. 
\end{assumption}
This assumption is not restrictive since it follows from the IFT, once we assume the strengthened generalized Legendre-Clesbch condition \eqref{LC-like.mixedcontrols} below. In fact, using equation \eqref{H_udot} and assuming the strengthened Legendre-Clebsch condition w.r.t. $u$, {\em i.e.} $H_{uu}\succ 0$, we can lose the dependence of $\dot{\uh}$, by using the IFT on \eqref{H_udot}, which yields
\begin{equation}
\label{representation.nonlin.derivative}
\dot{\uh} = \Gamma(\uh, \vh, \xh, \ph),
\end{equation}
for  a $\C^1$-function $\Gamma$.

Equation \eqref{representation.nonlin.derivative} shows that the dependence on $\dot{\uh}$ can be removed from \eqref{hamiltonian_2timedev}. We are now in position to formulate a system that can be used to achieve our desired representation.
Consider the mapping
{\small 
\begin{equation}
	\label{somemapping}
	(w, \lambda) \mapsto
	\left(\begin{array}{cc}
		H_u(w,p)\\
		\\
		-\ddot{H}_v(w,p)
	\end{array}\right),
\end{equation}
}
whose Jacobian w.r.t. $(u,v)$ at the extremal $(\wh, \hat\lambda)$ is
\begin{equation}
\label{somemapping_jacobian}
	\J :=
	\left(\begin{array}{cc}
	\disp
	H_{uu}(\wh,\ph) & H_{uv}(\wh,\ph) \\
	& \\
	\disp
	-\frac{\partial \ddot{H}_v}{\partial u}(\wh,\ph) &\disp  -\frac{\partial \ddot{H}_v}{\partial v}(\wh,\ph)
	\end{array}\right).
\end{equation}
To apply (IFT) to $H_u = 0, -\ddot{H}_v = 0$ and retrieve the controls, we assume the following {\em strengthened generalized Legendre-Clebsch condition}
\begin{equation}
	\label{LC-like.mixedcontrols}
	\tag{SLC}
	H_{uu}(\wh,\ph) \succ 0, \quad -\frac{\partial \ddot{H}_v}{\partial v}(\wh,\ph) \succ 0. 
\end{equation}

We get the following result.
 \begin{theorem}
	\label{OC-OSequivalence}
	Assume that \eqref{LC-like.mixedcontrols} holds. If $\wh$ is a {\em weak minimum} with associated multiplier $\hat\lambda$, then the optimal control $(\uh, \vh)$ admits the feedback form
	\begin{equation}
	\label{control.elimination}
		\uh = U(\xh, \ph) \quad \vh = V(\xh, \ph),
	\end{equation}
	where $U$ and $V$ are $\mathcal{C}^1$-functions.
	Furthermore, the extremal $(\wh, \hat\lambda)$ satisfies the {\em optimality system}
	\begin{equation}
	\label{optimality.system}
	\tag{OS}
	\left\{\begin{split}
	&\dot{x} = f(x,U(x,p), V(x,p)), \quad\text{ a.e. on $[0,T],$}\\
	&\dot{p} = -p\cdot D_xf(x,U(x,p), V(x,p)), \quad \text{ a.e. on $[0,T],$}\\
	&\eta_j(x(0), x(T)) = 0, \quad \text{ for } j = 1, \cdots, d_{\eta},\\
	&\left(p(0),p(T)\right) = \left(-D_{x_0}\ell,D_{x_T}\ell \right)(x(0), x(T),\beta),\\
	&H_v(x(T), U(x(T), p(T))) = 0,\quad \dot{H}_v(x(0), U(x(0), p(0))) = 0. 
	\end{split}\right.
	\end{equation}
\end{theorem}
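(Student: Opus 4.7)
The strategy is to apply the Implicit Function Theorem (IFT) to the mapping
\[
\Phi(u,v;x,p) := \bigl(H_u(x,u,v,p),\ -\ddot H_v(x,u,v,p)\bigr),
\]
regarded as a nonlinear equation in the unknowns $(u,v)$ with parameters $(x,p)$. The feedback representation \eqref{control.elimination} will then follow, and the system (OS) will be assembled from the PMP together with these feedbacks. The proof therefore splits into three tasks: verifying invertibility of $\J$, applying the IFT, and matching the boundary data to what the PMP already provides.

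\textbf{Step 1: invertibility of $\J$.} By the standard Legendre--Clebsch identity \eqref{LegendreClebsch.mixed.equivalent} one has $H_{uv}(\wh,\ph)=0$, so the Jacobian $\J$ in \eqref{somemapping_jacobian} is block lower-triangular with diagonal blocks $H_{uu}(\wh,\ph)$ and $-\partial_v \ddot H_v(\wh,\ph)$. Under \eqref{LC-like.mixedcontrols} both blocks are positive definite, hence $\J$ is non-singular at $(\wh,\hat\lambda)$.

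\textbf{Step 2: feedback representation.} One checks that $\Phi(\uh,\vh;\xh,\ph)=0$. The identity $H_u=0$ is exactly \eqref{stationarity.Hamiltonian}. For $\ddot H_v=0$ one starts from $H_v\equiv 0$ (also in \eqref{stationarity.Hamiltonian}), differentiates in time to obtain \eqref{switching.function.D1_preview}, invokes Proposition \ref{Goh.condition} to drop the $v$-dependent terms and arrive at \eqref{switching.function.D1}, then differentiates once more to get \eqref{hamiltonian_2timedev}; the residual $\dot{\uh}$ is eliminated via \eqref{representation.nonlin.derivative}, which is itself produced by applying the IFT to $H_u=0$ under $H_{uu}\succ 0$. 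The IFT, combined with Step 1 and the $\mathcal{C}^2$ regularity of the data (Assumption \ref{datafunctions.lipschitz}), yields $\mathcal C^1$ functions $U,V$ on a neighbourhood of $(\xh,\ph)$ with $\uh=U(\xh,\ph)$ and $\vh=V(\xh,\ph)$.

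\textbf{Step 3: assembly of (OS).} The first two lines of \eqref{optimality.system} are obtained by plugging the feedbacks into \eqref{state.dynamics} and \eqref{costate_dynamics}; the third and fourth lines are \eqref{initial-final.constraints} and \eqref{transversality.conditions} verbatim. The last line records two scalar (vector) identities that are pointwise consequences of the PMP: $H_v(\wh,\ph)\equiv 0$ gives $H_v(T)=0$, and its time derivative $\dot H_v(\wh,\ph)=\ph\cdot[f_0,f_i]$ from \eqref{switching.function.D1} vanishes identically, hence in particular at $t=0$. The main delicate point — and the part that required most care upstream — is the derivation of $\ddot H_v$ as a function depending only on $(x,u,v,p)$, free of $\dot{\uh}$ and $\dot{\vh}$; this rests crucially on Goh's relations (Proposition \ref{Goh.condition}) to kill the $v$-linear terms in $\dot H_v$ and on the IFT-based elimination of $\dot{\uh}$ via \eqref{H_udot}--\eqref{representation.nonlin.derivative}. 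Once these are in place, the invertibility of $\J$ and the IFT deliver the conclusion in a routine manner.
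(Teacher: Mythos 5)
Your proof is correct and takes essentially the same approach as the paper: invertibility of $\J$ from $H_{uv}=0$ together with \eqref{LC-like.mixedcontrols} (you argue via block lower-triangularity where the paper writes an explicit factorization of $\J$ into two invertible factors, a cosmetic difference), the IFT for the feedback laws after eliminating $\dot{\uh}$ via Goh's conditions and \eqref{representation.nonlin.derivative}, and the observation that the boundary conditions $H_v(T)=\dot H_v(0)=0$ are supplied by the PMP and are what restores the full stationarity in $v$ from $\ddot H_v=0$.
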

\begin{proof} 
	From our previous discussion, since $H_{uu}\succ 0$, we can remove the dependence of $\dot{\uh}$ from $\ddot{H}_v$. Note that since $H_{uv} \equiv 0$,
	\begin{equation}
		\label{matrices}
		\mathcal{J}  = \left(\begin{array}{cc}
		H_{uu} & 0 \\
		\disp - \frac{\partial \ddot{H_v}}{\partial u}   & \disp - \frac{\partial \ddot{H_v}}{\partial v}
		\end{array}\right) = 
		\left(\begin{array}{cc}
		H_{uu} & 0 \\
		0      & \disp - \frac{\partial \ddot{H_v}}{\partial v}
		\end{array}\right)
		\left(\begin{array}{cc}
		I & 0 \\
		\disp \frac{\partial \ddot{H_v}}{\partial v} ^{-1}\frac{\partial \ddot{H_v}}{\partial u} & I
		\end{array}\right).
	\end{equation}
	Since the second matrix in \eqref{matrices} is invertible from \eqref{LC-like.mixedcontrols} and the third one is invertible by inspection, $\mathcal{J}$ is also invertible. Representation \eqref{control.elimination} follows from the IFT. 
	
	Moving on to \eqref{optimality.system}, note that it is derived from the PMP. However, the feedback forms in \eqref{control.elimination} are equivalent to $H_u = 0, \ddot{H}_v = 0$. To obtain the stationarity of the Hamiltonian w.r.t. $v$, we include the boundary conditions $H_v(T) = \dot{H}_v(0) = 0$. 
	We could have chosen other pair of boundary conditions, but this choice will simplify the presentation of the results that follow. 
\end{proof}

\subsection{Computing the Linear Controls}
To solve \eqref{optimality.system}, we need explicit analytical expressions for the controls in terms of $x$ and $p$. The nonlinear controls usually can be obtained from the stationarity $H_u = 0.$ We start by assuming that the representation $\uh = U(\xh, \ph)$ was already obtained. 

In the sequel we introduce the Poisson bracket notation. Given two functions $g,h$ that depend on $x,p$, the {\em Poisson bracket} is given by
\begin{equation}
	\label{poisson_bracket}
	\{g,h\} := D_xgD_ph - D_pgD_xh = \sum_{i = 1}^n\left(\frac{\partial g}{\partial x_i}\frac{\partial h}{\partial p_i} - \frac{\partial g}{\partial p_i}\frac{\partial h}{\partial x_i}\right).
\end{equation}

The following result is a direct consequence of this definition.
\begin{proposition}
	\label{poisson_bracket.proposition}
	Let $F = F(x,p,t)$ be a $\mathcal{C}^1$-function. Then
	\begin{equation}
		\frac{\dd}{\dd t}F(x,p,t) = \{F,H\} + \frac{\partial F}{\partial t},
	\end{equation}
	provided that $(x,p)$ follows the Hamiltonian dynamics $\dot{x}  = H_p, \,\, -\dot{p} = H_x.$
\end{proposition}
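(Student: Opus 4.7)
The plan is to prove the identity by a direct application of the chain rule, followed by a recognition of the resulting expression as a Poisson bracket.

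First, since $F$ is $\mathcal{C}^1$ and the curve $t\mapsto (x(t),p(t))$ is absolutely continuous (being a solution of the Hamiltonian ODE), the classical chain rule yields, for a.e. $t\in[0,T]$,
\begin{equation*}
  \frac{\dd}{\dd t}F(x(t),p(t),t) = D_xF\,\dot x + D_pF\,\dot p + \frac{\partial F}{\partial t}.
\end{equation*}

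Next, I would substitute the Hamiltonian dynamics $\dot x = D_pH$ and $\dot p = -D_xH$ into this expression, obtaining
\begin{equation*}
  \frac{\dd}{\dd t}F(x(t),p(t),t) = D_xF\,D_pH - D_pF\,D_xH + \frac{\partial F}{\partial t}.
\end{equation*}

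Finally, I would observe that the first two terms coincide exactly with the definition \eqref{poisson_bracket} of the Poisson bracket $\{F,H\}$, which delivers the desired identity. Since the argument is a routine chain rule computation, there is no real obstacle; the only point worth double-checking is that the scalar products are written in the correct order (row-vector conventions), consistent with how $D_x$ and $D_p$ are defined in the notation paragraph, so that the signs in the Poisson bracket match the convention adopted in \eqref{poisson_bracket}.
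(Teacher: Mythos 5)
Your proof is correct and follows exactly the route the paper intends: the paper gives no explicit proof, merely noting that the proposition ``is a direct consequence of this definition,'' and your chain-rule computation with substitution of $\dot x = H_p$, $\dot p = -H_x$ is precisely that direct consequence. Your remark about checking the row-vector conventions is apt but raises no actual issue, since the paper's definition \eqref{poisson_bracket} of $\{g,h\}$ matches the sign pattern you obtain.
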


As a consequence of Proposition \ref{poisson_bracket.proposition}, if the optimal control $(\uh, \vh)$ admits a feedback representation $\uh = U(x,p)$, then 
\begin{equation}
	\label{u_dev}
	\dot{\uh} = \{U, H\} = \{U, p\cdot f_0\} + \sum_{j = 1}^{m}\vh_j \{U,p\cdot f_j \}.
\end{equation}
By substituting \eqref{u_dev} in equation \eqref{hamiltonian_2timedev}, we obtain, for $i,j = 1, \dots, m$, 
\begin{equation}
\label{kernel_equation.prev1}
\begin{split}
		\ddot{H}_{v_i} = \ \gamma_{i0} + \disp \sum_{j = 1}^{m}\vh_j\gamma_{ij} = 0,\quad\text{where }\gamma_{ij} := \ \disp \ph\cdot \left([f_j, [f_0, f_i]] + D_u[f_0, f_i]\{U,\ph\cdot f_j\}\right).
\end{split}
\end{equation}

\section{The Shooting Algorithm}
\label{shooting_algorithm}
A well-known method for solving TPBVPs is the {\em shooting algorithm}. Given an initial guess for the states and costates, the method iteratively adjusts these initial values in order to verify the boundary conditions.

Our goal is to numerically solve  \eqref{optimality.system} by applying a shooting algorithm. Note that the unknown multiplier $\beta$ is involved in the formulation of \eqref{optimality.system}, it is then included as a shooting variable as shown below.
\subsection{The shooting function}
We define the {\em shooting function} as follows.
\begin{definition}[Shooting function]
	\label{shooting.function}
	Let $\cals : \mathbb{R}^n \times \mathbb{R}^{n,*} \times \mathbb{R}^{d_{\eta}} =: D(\cals) \to \mathbb{R}^{d_{\eta}} \times \mathbb{R}^{2n + 2m}$ be the shooting function given by
	\begin{equation}
		\label{shooting.function.equation}
		(x_0, p_0, \beta) =: \nu \mapsto \cals (\nu) =
		\left(\begin{array}{c}
			\eta(x_0,x(T))\\
			p_0 + D_{x_0}\ell(x_0,x(T),\beta)\\
			p(T) - D_{x_T}\ell(x_0,x(T),\beta)\\
			H_v\left(x(T), U(x(T), p(T))\right)\\
			\dot{H}_v\left(x_0, U(x_0, p_0)\right)
		\end{array}\right),
	\end{equation}
where $(x,p)$ is the solution of the initial value problem
	\begin{equation}
		\label{dynamics.substituted}
		\begin{split}
			\dot{x} &= H_p(x,U(x,p), V(x,p),p), \quad x(0) = x_0,\\
			\dot{p} &= -H_x(x,U(x,p), V(x,p),p), \quad p(0) = p_0.\\	
		\end{split}
	\end{equation}
\end{definition}

Solving the differential-algebraic system \eqref{optimality.system} is equivalent to finding the roots of the shooting function $\cals$. Since the number of unknowns in $\cals(\hat\nu) = 0$ may be smaller than the number of equations, the Gauss-Newton method is a suitable approach. At each step the method updates the current approximation $\nu_k$ by 
\begin{equation}
	\label{nu.update}
	\nu_{k+1} \leftarrow \nu_k + \Delta_k, 
\end{equation}
where the increment $\Delta_k$ is computed by solving the linear approximation of the least squares problem
\begin{equation}
	\label{gauss-newton.approximation}
	\begin{array}{l}
		\disp \min_{\Delta \in D(\cals)} \left| \cals(\nu_k) + \cals'(\nu_k)\Delta \right|^2.
	\end{array}
\end{equation}
The solution of the linear regression \eqref{gauss-newton.approximation} is known to be
\begin{equation}
	\label{linear.regression.solution}
	\Delta_k = -\left( \cals '(\nu_k)^T\cals '(\nu_k)\right)^{-1}\cals '(\nu_k)^T\cals (\nu_k),
\end{equation}
provided the matrix $\cals '(\nu_k)^T\cals '(\nu_k)$ is non-singular. 
One can prove that the Gauss-Newton method \eqref{nu.update}-\eqref{linear.regression.solution} converges at least linearly as long as the derivative $\cals'(\hat\nu)$ exists and is injective. If in addition it is also Lipschitz continuous, the method converges locally quadratically (see {\em e.g.} Fletcher \cite{fletcher2013practical}, or alternatively Bonnans \cite{bonnans2006numerical}). \

\subsection{ Computation of the derivative of the shooting function}
In this paragraph we aim at obtaining a linearized differential system to be used afterwards to compute the derivative of the shooting function.

A general differential-algebraic control system can be written as
\begin{equation}
\label{differential-algebraic.general}
\left\{\begin{split}
\dot{\xi} &= \F(\xi, \alpha),\\
0 &= \G(\xi,\alpha),\\
0 &= \I(\xi(0), \xi(T)),
\end{split}\right.
\end{equation}
where $\F:\mathbb{R}^n\times\mathbb{R}^m \to \mathbb{R}^n,\, \G:\mathbb{R}^n\times\mathbb{R}^m \to \mathbb{R}^{d_{\G}}$ and $\I:\mathbb{R}^n\times\mathbb{R}^n \to \mathbb{R}^{d_{\I}}$ are $\C^1$-functions. The functions $\xi$ and $\alpha$ represent the tuple of states and costates and the control, respectively. Consider $\wt = (\tilde\xi, \tilde\alpha)$ a solution of \eqref{differential-algebraic.general}, then the {\em linearization of \eqref{differential-algebraic.general}} at $\wt$ is given by
\begin{equation}
\label{differential-algebraic.general.linearized}
\left\{\begin{split}
\dot{\bar{\xi}}&= D_{\xi}\F(\wt)\bar{\xi} + D_{\alpha}\F(\wt)\bar{\alpha},\\
0 &= D_{\xi}\G(\wt)\bar{\xi} + D_{\alpha}\G(\wt)\bar{\alpha},\\
0 &= D_{\xi_0}\I\left(\tilde{\xi}(0), \tilde{\xi}(T)\right)\bar{\xi}(0) +  D_{\xi_T}\I\left(\tilde{\xi}(0), \tilde{\xi}(T)\right)\bar{\xi}(T).
\end{split}\right.
\end{equation}

Let us apply this procedure to get the linearization of \eqref{optimality.system}.  We set $\xi:=(x,p)$, $\alpha:=(u,v)$  and $w := (\xi, \alpha)$. 
The linearized state and costate dynamics \eqref{state.dynamics}, \eqref{costate_dynamics} can be written as
\begin{align}
	\label{statedynamics.linearized}
	\dot{\xb} &= D_xf(w)\xb + D_uf(w)\ub + D_vf(w)\vb,\\ 
	\label{costatedynamics.linearized}
	\dot{\pb} &= -\left(\pb H_{xp} + \xb^T H_{xx} + \ub^T H_{ux} + \vb^TH_{vx}\right).
\end{align}
The endpoint conditions are also easily linearized, giving
{\small 
\begin{align}
\label{endpoint.linearized.constraints}
0 &= D\eta(\xh(0), \xh(T))(\xb(0), \xb(T)),\\
\label{transversality_0.linearized}
\pb(0) &= -\left(\xb^T(0) D^2_{x_0}\ell(\wh, \hat\beta) + \xb^T(T) D^2_{x_0x_T}\ell(\wh, \hat\beta) + \sum_{j = 1}^{d_{\eta}} \hat\beta_jD_{x_0}\eta_j \right),\\
\label{transversality_T.linearized}
\pb(T) &= \left(\xb^T(T) D^2_{x_T}\ell(\wh, \hat\beta) + \xb^T(T) D^2_{x_0x_T}\ell(\wh, \hat\beta) + \sum_{j = 1}^{d_{\eta}} \hat\beta_jD_{x_T}\eta_j \right). 
\end{align}}
The linearization of the other components of \eqref{shooting.function} gives
\begin{align}
\label{linHu}
\textnormal{Lin } H_u &= \pb D_uf + \xb^TH^T_{ux} + \ub^TH_{uu}\\
\label{linHv}
\textnormal{Lin } \ddot{H}_v &= \pb D_vf + \xb^TH^T_{vx}\\
\label{linHvT}
\left.\textnormal{Lin } H_v \right|_{t = T} &=  \left. \pb D_vf \right|_{t = T}+\left. \xb^TH^T_{vx} \right|_{t = T}\\
\label{lindotHv0}
\left.\textnormal{Lin } \dot{H_v}\right|_{t = 0} &=  \left. \frac{\dd}{\dd t}\right|_{t = 0} \left(\pb D_vf + \xb^TH^T_{vx}\right).
\end{align}
The linearized system \eqref{statedynamics.linearized}-\eqref{transversality_T.linearized}, \eqref{linHu}-\eqref{lindotHv0} is referred as (LS).
Finally, the evaluation of $\cals'$ in the direction $\bar{\nu} := (\xb_0, \pb_0, \bar{\beta})$ gives:
\begin{equation}
\label{shootingfunction.derivative.explicit}
\cals'(\hat{\nu}) \bar{\nu} =
\left(
\begin{array}{c}
D\eta(\xh(0), \xh(T))(\xb_0, \xb(T))\\
\pb_0 + \left[\xb^T_0 D^2_{x_0}\ell + \xb^T(T) D^2_{x_0x_T}\ell + \sum_{j = 1}^{d_{\eta}} \bar{\beta}_jD_{x_0}\eta_j \right]\\
\pb(T) - \left[\xb^T(T) D^2_{x_T}\ell + \xb^T(T) D^2_{x_0x_T}\ell + \sum_{j = 1}^{d_{\eta}} \bar{\beta}_jD_{x_T}\eta_j \right]\\ \disp
\left. \pb D_vf + \xb^TH^T_{vx} \right|_{t = T}\\
\left. \frac{\dd}{\dd t} \left(\pb D_vf + \xb^TH^T_{vx}\right) \right|_{t = 0}
\end{array}
\right).
\end{equation}

\section{Second Order Optimality Conditions}
\label{second_order}
In this section, we briefly review second order optimality conditions given in Aronna \cite{Aronna2018} which we will apply later to prove convergence of the shooting algorithm.

The optimality conditions will be presented in terms of the quadratic form
	\begin{multline}
	\label{secondvariation}
		 \Omega(\wb) := D^2_{(x_0,x_T)^2} \ell(\xb(0), \xb(T))^2 \\+ \disp  \int_{0}^{T} \left( \xb^TH_{xx}\xb
		    + \ub^TH_{uu}\ub + 2\xb^TH_{ux}\ub + 2\xb^TH_{vx}\vb + 2\vb^TH_{uv}\ub\right) \dd t,
	\end{multline}
or some transformed version of it. A well-known result around such quadratic form, obtained by means of a second order Taylor expansion, is that
\begin{equation}
\label{lagrangina.secondderivative}
	D^2\L(\wb)^2 = \Omega(\wb).
\end{equation}

We define the {\em critical cone} as
\begin{equation}
\label{criticalcone}
	\C := \left\{ \wb \in \W: \text{\eqref{statedynamics.linearized} and \eqref{endpoint.linearized.constraints}} \ \text{hold} \right\}.
\end{equation}
Since we are interested in stating second order sufficient conditions, we will require perturbations of the controls and states in $L^2$. Hence, we extend $\Omega$ to the function space $\W_2 := \X_2 \times \U_2 \times \V_2$, where $\X_2 := W^{1,2}([0,T]; \bbR^n),$ $\U_2 := L^2([0,T];\bbR^l)$ and $\V_2 := L^2([0,T];\bbR^m)$. The closure of $\C$ in $\W_2$ becomes
\begin{equation}
\label{criticalcone.W2}
	\C_2 := \left\{ \wb \in \W_2: \text{\eqref{statedynamics.linearized} and \eqref{endpoint.linearized.constraints} \ hold} \right\},
\end{equation}
and one has $\C = \C_2 \cap \W$. Hence $\C \subset \C_2$ and the inclusion is dense, as discussed in \cite{Dmi77}.

\subsection{Second Order Necessary Conditions of Optimality}
The following result holds.
\begin{theorem}[Second order necessary condition \cite{Aronna2018, MR1641590}]
	\label{SONC.theorem.C2}
	Suppose that $\wh$ is a weak minimum of problem (OC). Then
	\begin{equation}
	\label{SONC.general.C2}
	\Omega(\wb) \ge 0,\quad \text{ for all } \wb \in \C_2. 
	\end{equation}
\end{theorem}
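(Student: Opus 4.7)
The plan is to establish the inequality first for critical directions in the stronger topology $\W$ and then extend by density to $\C_2$. For the first step, fix $\bar w = (\bar x, \bar u, \bar v) \in \C$. I would construct a family of admissible trajectories $w_\varepsilon \in \W$ depending smoothly on a small parameter $\varepsilon \ge 0$ such that $w_0 = \hat w$ and the derivative at $\varepsilon = 0$ equals $\bar w$. The subtlety is that $\bar w$ satisfies the endpoint conditions $\eta$ only to first order, so I need a second-order correction to achieve $\eta(x_\varepsilon(0),x_\varepsilon(T)) = 0$ exactly. Assumption \ref{qualification.constraints} provides the surjectivity needed to apply the implicit function theorem, yielding corrections of order $O(\varepsilon^2)$ on, say, the initial datum or the controls. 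The resulting feasible family satisfies $w_\varepsilon = \hat w + \varepsilon \bar w + O(\varepsilon^2)$ in $\W$.

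Next, I would perform a second-order expansion of the Lagrangian $\L(w_\varepsilon,\hat\lambda)$ at $\hat w$. Because $w_\varepsilon$ is feasible, the dynamics and endpoint constraints make the integral and multiplier terms in \eqref{Lagrangian} vanish, so $\L(w_\varepsilon,\hat\lambda) = \phi(x_\varepsilon(0),x_\varepsilon(T))$. The first-order term $D\L(\hat w,\hat\lambda)\bar w$ vanishes thanks to the PMP (Theorem \ref{PMP}), while the second-order term is $\Omega(\bar w)$ by \eqref{lagrangina.secondderivative}. Hence
\begin{equation*}
\phi(x_\varepsilon(0),x_\varepsilon(T)) - \phi(\hat x(0),\hat x(T)) = \tfrac{\varepsilon^2}{2}\Omega(\bar w) + o(\varepsilon^2).
\end{equation*}
Since $\hat w$ is a weak minimum and $w_\varepsilon \to \hat w$ in $\W$, the left-hand side is nonnegative for $\varepsilon$ small, yielding $\Omega(\bar w) \ge 0$ for every $\bar w \in \C$.

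For the extension to $\C_2$, I would exploit density together with continuity of $\Omega$ on $\W_2$. The coefficients $H_{xx}$, $H_{uu}$, $H_{ux}$, $H_{vx}$, $H_{uv}$ evaluated along $\hat w$ are essentially bounded by Assumption \ref{datafunctions.lipschitz} and Assumption \ref{regularity.controls}, and $D^2\ell$ evaluated at $(\hat x(0),\hat x(T),\hat\beta)$ is a fixed bilinear form on $\mathbb{R}^{2n}$; moreover the trace maps $\bar x \mapsto (\bar x(0),\bar x(T))$ are continuous from $\X_2$ to $\mathbb{R}^{2n}$. Therefore $\Omega$ is a continuous quadratic form on $\W_2$. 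Given $\bar w \in \C_2$, pick a sequence $\bar w^k \in \C$ converging to $\bar w$ in $\W_2$ (possible by density, as pointed out after \eqref{criticalcone.W2}). Then $0 \le \Omega(\bar w^k) \to \Omega(\bar w)$, concluding the proof.

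The main obstacle is the construction of the admissible family $w_\varepsilon$: one must solve the nonlinear endpoint constraint $\eta(x_\varepsilon(0),x_\varepsilon(T)) = 0$ exactly while keeping $w_\varepsilon = \hat w + \varepsilon\bar w + O(\varepsilon^2)$. This is precisely the step where Assumption \ref{qualification.constraints} is used, through an implicit function argument in a Banach-space setting. A secondary technical point is ensuring that the remainder in the Taylor expansion of $\L$ is $o(\varepsilon^2)$ in the $\W$-topology; this follows from the Lipschitz continuity of the second derivatives of the data, so no further regularity is required.
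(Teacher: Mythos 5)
Your argument is correct and is essentially the standard proof of this result: the paper itself does not prove Theorem \ref{SONC.theorem.C2} but quotes it from \cite{Aronna2018, MR1641590}, where the same two-step scheme is carried out --- a Lyusternik/implicit-function correction (licensed by the surjectivity in Assumption \ref{qualification.constraints}) producing a feasible family $w_\varepsilon=\wh+\varepsilon\wb+O(\varepsilon^2)$, a second-order expansion of the Lagrangian (whose first derivative vanishes by the PMP, so the $O(\varepsilon^2)$ correction does not pollute the quadratic term) giving $\Omega(\wb)\ge 0$ on $\C$, and then passage to $\C_2$ by density and $L^2$-continuity of $\Omega$. The only point worth flagging is that the density of $\C$ in $\C_2$ is itself nontrivial, but the paper explicitly defers it to \cite{Dmi77}, so invoking it as you do is consistent with the text.
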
 
\if{
\begin{remark}
	\label{LegendreClesbch.corollary}
	As discussed in \cite{Aronna2018}, for the case when the set of multipliers is not a singleton, \ref{SONC.general.C2} can be extended by restricting the set of multipliers to a set with more information around the nominal trajectory. 
	
	The new second order necessary condition is given as
	\begin{equation}
		\label{maximum.SONC}
		\max_{\lambda \in \left({\rm co}\Lambda\right)^\#}\Omega[\lambda](\wb) \ge 0, \text{ for all } \wb \in \C_2,
	\end{equation}
	where ${\rm co} \Lambda$ denotes the convex hull of $\Lambda$ and the set $\left({\rm co}\Lambda\right)^\#$ can be characterized as
	\begin{equation}
		\left({\rm co}\Lambda\right)^\# = \{ \lambda \in {\rm co} \Lambda : H_{uu}[\lambda] \succeq 0 \ \text{and} \ H_{uv}[\lambda] = 0, \text{ a.e. on }[0,T] \}.
	\end{equation}
	
	When the set of multipliers is a singleton, \eqref{maximum.SONC} implies that the set $\left({\rm co}\Lambda\right)^\#$ is nonempty and coincides with $\Lambda$. As a consequence, the Legendre-Clebsch condition stated in \eqref{LegendreClebsch.mixed.equivalent} follows. Hence the term $H_{uv}$ can be omitted from the quadratic form $\Omega$.
\end{remark}
}\fi
To state second order sufficient conditions one can not rely on coercivity of $\Omega$ w.r.t. the controls since $H_{vv} \equiv 0$. In order to overcome this problem, the {\em Goh transform} is employed. The latter is a change of variables introduced by Goh  in \cite{goh1966second} and applied by him and other authors to derive second order conditions \cite{Goh66, Dmi77}. For the linearized system \eqref{statedynamics.linearized}, Goh transform is defined as
\begin{equation}
\label{transformation.Goh}
		\yb(t) := \disp \int_0^t \vb(\tau)\dd \tau,\quad \bar{\xi}(t):= \xb(t) - f_v(t)\yb(t),
		\quad	\text{for} \,\, t \in [0,T].
\end{equation}
One can easily check that the dynamics of the new variable $\bar{\xi}$ is given by
\begin{gather}
	\label{dynamics.xi}
	\dot{\bar{\xi}} = f_x\bar{\xi} + f_u\ub + B\yb, \quad \bar{\xi}(0) = \xb(0),\\
\label{matrixB}
	\text{where }B := f_xf_v - \frac{\dd}{\dd t}f_v,  
\end{gather}
and $B$ is well-defined since $u$ is differentiable as stated in Assumption \ref{regularity.controls}. 

We are interested in how the functional $\Omega$ and the critical cone are expressed in terms of the transformed variables $(\bar{\xi}, \ub, \yb)$. For this, consider a critical direction $ \wb \in \calc$. Note that $\xb(T) = \bar{\xi}(T) + f_v(T)\yb(T)$ and $\xb(0) = \bar{\xi}(0)$. Hence we introduce the new variable $\hb := \yb(T)$, which appears in the transformation of the quadratic functional through integration by parts and becomes a value that is independent of $\yb$ when passing to the limit in the $L^2$-topology. Equation \eqref{endpoint.linearized.constraints} can be rewritten as
\begin{equation}
\label{endpoint.linearized.constraints.Goh}
D \eta_j(\xh(0), \xh(T))\left(\bar{\xi}(0), \bar{\xi}(T) + f_v(T)\hb\right) = 0, \ \text{  for $j = 1, \cdots, d_{\eta}$},
\end{equation}
so that the critical cones $\calc_2$ and $\calc$ are respectively mapped into the sets
{\small 
\begin{gather}
\P_2 := \left\{ (\bar{\xi}, \ub, \yb, \hb) \in \W_2 \times \mathbb{R}^m : \yb(0) = 0, \yb(T) = \hb, \eqref{dynamics.xi} \text{ and }\eqref{endpoint.linearized.constraints.Goh} \text{ hold}\right\},\\
\P := \left(\P_2 \cap \W\right) \times \mathbb{R}^m.
\end{gather}}
The quadratic functional $\Omega$ {can also be written in terms of the new variables $(\bar{\xi}, \ub, \yb, \hb)$, and} takes the form
\begin{multline}	
	\label{second_variation.Gohtransform}
		\Omega_{\P}(\bar{\xi}, \ub, \vb, \yb, \hb):= g(\bar{\xi}(0), \bar{\xi}(T), \hb) + \disp\int_0^T\left(\bar{\xi}^TH_{xx}\bar{\xi} + 2\ub^TH_{ux}\bar{\xi} \right. \\
		\left. + 2\yb^TM\bar{\xi} + \ub^TH_{uu}\ub + 2\yb^TE\ub + \yb^TR\yb + 2\vb^TG\yb\right) \dd t,
\end{multline}
where
\begin{gather}
\label{matrixM}
	M:= f_v^TH_{xx}-\dot{H}_{vx} - H_{vx}f_x, \ \ E:= f_v^TH^T_{ux} - H_{vx}f_u,\\
	\label{matrixG}
	S:= \half \left(H_{vx}f_v + (H_{vx}f_v)^T\right), \ \ G:=  \half\left(H_{vx}f_v - (H_{vx}f_v)^T\right),\\
	\label{matrixR}
	R:= f_v^TH_{xx}f_v - (H_{vx}B + (H_{vx}B)^T) - \dot{S},\\
	\label{function.g}
	g(\bar{\xi}_0, \bar{\xi}_T , \hb):= D^2\ell(\bar\xi_0, \bar\xi_T + f_v(T)\hb)^2 + \hb^T(2H_{vx}(T)\bar\xi_T + S(T)\hb).
\end{gather}
For every critical variation $(\xb, \ub,\vb)$ and its respective transformed version $(\bar{\xi}, \ub, \yb, \yb(T))$, one can relate the quadratic functionals $\Omega$ and $\Omega_{\P}$ through integration by parts, as in \cite{Dmi77, Aronna2018}, obtaining
\begin{equation}
	\label{quadraticforms.relation}
	\Omega(\xb, \ub, \vb) = \Omega_{\P}(\bar{\xi}, \ub, \vb, \yb, \yb(T)).
\end{equation}
In view of latter identity, one can obtain optimality conditions in terms of $\Omega_{\P}$ and its extension to to $\W_2 \times \cR^m$ introduced below. 

An important issue is the presence of the term $2\vb^TG\yb$, which depends on the untransformed variation $\vb$. The expression of $G$ (see \eqref{second_variation.Gohtransform} and \eqref{matrixG}) gives
\begin{equation}
G_{ij} = - p\cdot[f_i,f_j].
\end{equation}
Hence, using Goh's conditions from Proposition \ref{Goh.condition}, the matrix $G$ vanishes and our quadratic form does not depend on $\vb$. The new quadratic form $\Omega_{\P_2}$, obtained from continuously extending $\Omega_{\P}$ to $\W_2 \times \cR^m$, assumes the form
{\small  
\begin{multline}
\label{second_variation.Gohtransform.P2}
\Omega_{\P_2}(\bar{\xi}, \ub, \yb, \bar h):= g(\bar{\xi}(0), \bar{\xi}(T), \bar h) \\+ \int_0^T\left(\bar{\xi}^TH_{xx}\bar{\xi} + 2\ub^TH_{ux}\bar{\xi} + 2\yb^TM\bar{\xi} + \ub^TH_{uu}\ub + 2\yb^TE\ub +  \yb^TR\yb \right) \dd t.
\end{multline}}
We are able now to state a version of necessary conditions which can be strengthened to sufficient conditions, once we assume coerciveness of $\Omega_{\P_2}$.
\begin{theorem}[\cite{Aronna2018}]
	If $\wh$ is a weak minimum of problem (OC), then
	\begin{equation}
	\label{SONC.Goh.setG.P2}
	\Omega_{\P_2}(\bar{\xi}, \ub, \yb, \hb) \ge 0, \quad \text{on} \ \P_2. 
	\end{equation}
\end{theorem}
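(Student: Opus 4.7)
The plan is to derive the inequality from Theorem \ref{SONC.theorem.C2} by inverting Goh's transform on the regular subset $\P \subset \P_2$, and then to pass to the $L^2$-limit. The first step uses the identity \eqref{quadraticforms.relation} together with Goh's conditions of Proposition \ref{Goh.condition}; the second step is a density argument, which is the only delicate point of the proof.

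On $\P$, the component $\yb$ is absolutely continuous, so I set $\vb := \dot{\yb} \in \V$ and $\xb := \bar{\xi} + f_v \yb \in \X$. A direct substitution in \eqref{dynamics.xi} shows that $(\xb,\ub,\vb)$ solves the linearized state equation \eqref{statedynamics.linearized}, while \eqref{endpoint.linearized.constraints.Goh} becomes \eqref{endpoint.linearized.constraints} since $\yb(T) = \hb$. Hence $(\xb,\ub,\vb) \in \C \subset \C_2$, and Theorem \ref{SONC.theorem.C2} yields $\Omega(\xb,\ub,\vb) \ge 0$. By identity \eqref{quadraticforms.relation} this equals $\Omega_{\P}(\bar{\xi},\ub,\vb,\yb,\hb)$, and Proposition \ref{Goh.condition} makes the antisymmetric matrix $G$ in \eqref{matrixG} vanish, so the only $\vb$-dependent term $2\vb^{T}G\yb$ drops out. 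Therefore $\Omega_{\P_2}(\bar{\xi},\ub,\yb,\hb) \ge 0$ on $\P$.

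To extend the inequality to $\P_2$, I would approximate an arbitrary $(\bar{\xi},\ub,\yb,\hb) \in \P_2$ by a sequence $(\bar{\xi}_k,\ub_k,\yb_k,\hb_k) \in \P$ converging in $\W_2 \times \mathbb{R}^m$. Concretely, one mollifies and truncates $\ub$ and $\yb$ to obtain $\ub_k \in \U$ and $\yb_k \in W^{1,\infty}$ with $\yb_k(0)=0$ and $L^2$-convergence, sets $\hb_k := \yb_k(T)$, and lets $\bar{\xi}_k$ solve \eqref{dynamics.xi} with data $(\ub_k,\yb_k)$ and with initial value adjusted by a small correction so that the linearized endpoint constraint \eqref{endpoint.linearized.constraints.Goh} is satisfied exactly. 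The existence of this correction, and its convergence to zero, is ensured by the surjectivity of the linearized endpoint map provided by Assumption \ref{qualification.constraints}. Since the coefficient matrices $H_{xx}, H_{ux}, H_{uu}, M, E, R$ of $\Omega_{\P_2}$ are in $L^\infty$, the quadratic form is $L^2$-continuous, and passing to the limit in $\Omega_{\P_2}(\bar{\xi}_k,\ub_k,\yb_k,\hb_k) \ge 0$ gives the conclusion. I expect this density step, which is the Goh-transformed analogue of the density $\C = \C_2 \cap \W \subset \C_2$ exploited in Dmitruk \cite{Dmi77}, to be the main technical obstacle; once it is handled, the result is an immediate consequence of the three ingredients above.
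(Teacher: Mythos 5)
Your argument follows the same route as the paper, which in fact does not prove this statement itself but quotes it from \cite{Aronna2018}; the discussion surrounding the theorem supplies exactly your three ingredients (the identity \eqref{quadraticforms.relation}, the vanishing of $G$ via Proposition \ref{Goh.condition}, and the extension by $L^2$-continuity and density). The first part of your proof --- inverting Goh's transform on $\P$ to land in $\C$ and deducing nonnegativity of $\Omega_{\P_2}$ on $\P$ from Theorem \ref{SONC.theorem.C2} --- is correct and is what the paper intends.

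The one place your density step needs repair is the treatment of $\hb$. As the paper itself remarks just before \eqref{endpoint.linearized.constraints.Goh}, in the $L^2$-closure the variable $\hb$ ``becomes a value that is independent of $\yb$'': an element of $\P_2$ is a quadruple in which $\hb$ is a free finite-dimensional variable, not the trace of $\yb$ (which is not even defined for a general $\yb \in L^2$). Your approximation sets $\hb_k := \yb_k(T)$ after mollifying $\yb$, so $\hb_k$ remains tied to $\yb$ and its limit will in general miss the prescribed $\hb$. The fix is standard: after mollifying, add to $\yb_k$ a boundary-layer correction supported on $[T-1/k,\,T]$ that forces $\yb_k(T) = \hb$; this correction is uniformly bounded and vanishes in $L^2$, so the convergence of $(\bar{\xi}_k, \ub_k, \yb_k, \hb)$ and the passage to the limit in the $L^2$-continuous quadratic form go through exactly as you describe. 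With that adjustment, together with the endpoint correction via Assumption \ref{qualification.constraints} that you already included, the proof is complete and coincides with the argument of \cite{Aronna2018} (and of \cite{Dmi77} in the totally affine case).
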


\subsection{Second Order Sufficient Conditions of Optimality}
We introduce the following $\gamma$-order, which shall be used to state the sufficient conditions. For $(\xb(0), \ub, \yb, \hb) \in \mathbb{R}^n\times \U_2\times \V_2 \times \mathbb{R}^m,$ we define
\begin{equation}
	\gamma_{\P}(\xb(0), \ub, \yb, \hb) := |\xb(0)|^2 + \left|\bar h\right|^2 + \disp \int_0^T(|\ub(t)|^2 + |\yb(t)|^2) \dd t.	
\end{equation}
We can also express it as a function of the original variations by setting
\begin{equation*}
	\gamma(\xb(0), \ub, \vb) := \gamma_{\P}(\xb(0), \ub, \yb, \hb),
\end{equation*}
where $\yb$ is obtained from $\vb$ through Goh's transform \eqref{transformation.Goh} and $\hb:=\yb(T)$.
\begin{definition}[$\gamma$-growth]
	We say that a trajectory $\wh = (\xh, \uh, \vh)$ satisfies the {\em $\gamma$-growth condition in the weak sense} if there exist $\varepsilon, \rho >0$ such that
	\begin{equation}
		\label{g-growth}
		\phi(x(0), x(T)) \ge \phi(\xh(0), \xh(T)) + \rho\gamma(x(0) - \xh(0), u - \uh, v - \vh),
	\end{equation} 
for every feasible trajectory $w$ that verifies $\norm{w - \wh}_{\infty} < \varepsilon$.
\end{definition}

The following theorem was proved in \cite{Aronna2018} for a more general case allowing inequality endpoint constraints and possibly non-unique multiplier, and previously proposed by Dmitruk in \cite{Dmi77} in the totally control-affine setting.

\begin{theorem}[Sufficient condition for weak optimality \cite{Aronna2018}]
	\label{SOSC}
	Let $\hat w$ be a feasible trajectory satisfying the PMP with unique associated multiplier $\hat\lambda$. If for some $\rho > 0$ the quadratic functional $\Omega_{\P_2}$ satisfies
		\begin{equation}
		\label{Omega.P2.coersity.unique}
		\Omega_{\P_2}(\bar{\xi}, \ub, \yb, \hb) \ge \rho \gamma_{\P}(\xb(0), \ub, \yb, \hb) , \quad on \ \P_2,
		\end{equation}
		then  $\wh$ is a weak minimum satisfying the $\gamma$-growth in the weak sense.
		
		Conversely, if $\wh$ is a weak minimum satisfying $\gamma$-growth, then \eqref{Omega.P2.coersity.unique} is satisfied for some $\rho > 0$.
\end{theorem}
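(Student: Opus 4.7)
The plan is to treat the two implications separately, using a contradiction/normalization argument for sufficiency and a careful approximation argument for the converse. In both cases the central object is the second order Taylor expansion of the Lagrangian $\L$ around $\wh$. Since $\hat\lambda$ is the unique multiplier associated with $\wh$, the PMP gives $D\L(\wh,\hat\lambda)=0$ along any admissible variation, so for a feasible trajectory $w=(x,u,v)$ near $\wh$ one has
\begin{equation*}
	\phi(x(0),x(T))-\phi(\xh(0),\xh(T)) \;=\; \L(w,\hat\lambda)-\L(\wh,\hat\lambda) \;=\; \tfrac{1}{2}\Omega(\bar w) + r(\bar w),
\end{equation*}
with $\bar w := w-\wh$ and a remainder $r(\bar w)$ controlled by the Lipschitz regularity of the second derivatives of the data (Assumption \ref{datafunctions.lipschitz}). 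Applying Goh's transform \eqref{transformation.Goh} and invoking Proposition \ref{Goh.condition} to kill the $2\vb^T G\yb$ term, identity \eqref{quadraticforms.relation} rewrites the principal part as $\Omega_{\P_2}(\bar\xi,\ub,\yb,\hb)$ with $\hb=\yb(T)$.

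For the sufficient direction, I would argue by contradiction. Suppose $\gamma$-growth fails: then there is a sequence of feasible $w_k\to\wh$ in $L^\infty$ with $\phi(x_k(0),x_k(T)) - \phi(\xh(0),\xh(T)) < \tfrac{\rho}{2k}\gamma_k$, where $\gamma_k:=\gamma(\xb_k(0),\ub_k,\vb_k)>0$. Dividing the variations after Goh transform by $\sqrt{\gamma_k}$ produces a bounded sequence in $\mathbb{R}^n\times\U_2\times\V_2\times\mathbb{R}^m$ of unit $\gamma_{\P}$-norm. Using the linearity of the Goh transform and the linearized constraints, extract a subsequence converging weakly in $L^2$ and strongly in the finite-dimensional pieces to a limit $(\bar\xi^\infty,\ub^\infty,\yb^\infty,\hb^\infty)\in\P_2$ (the linearized dynamics and endpoint constraints pass to the limit by standard compactness of the solution map of a linear ODE with $L^2$ inhomogeneity). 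Divide the Lagrangian expansion by $\gamma_k$: the left-hand side is negative by assumption and tends to zero, the remainder vanishes (here one uses the Lipschitz estimate for the second derivatives of $\ell$ and $H$, combined with the crucial fact that after Goh transform the quadratic form contains no $\vb$, so the relevant norm is $\gamma_{\P}$ and not $\|v-\vh\|_2$), while the quadratic part is weakly lower semicontinuous once the $H_{uu}$-block is handled by the strong $L^2$ convergence of $\ub_k/\sqrt{\gamma_k}$ on the critical cone (a standard compactness-plus-coercivity argument, since $H_{uu}\succ 0$ by \eqref{LC-like.mixedcontrols}). Passing to the liminf and using \eqref{Omega.P2.coersity.unique} applied to the limit yields $0\ge \tfrac{\rho}{2}$, a contradiction.

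For the converse, assume $\wh$ is a weak minimum with $\gamma$-growth. Pick any $(\bar\xi,\ub,\yb,\hb)\in\P_2$; by density of $\P$ in $\P_2$ (the same density argument as the one discussed below \eqref{criticalcone.W2}, now in the Goh-transformed variables) it suffices to establish the bound on $\P$. For an element of $\P$ I would build, for small $\tau>0$, a family of feasible trajectories $w_\tau = \wh + \tau\bar w + o(\tau)$ in $L^\infty$ using the inverse function theorem in the spirit of Assumption \ref{qualification.constraints}: the surjectivity of $D\hat\eta$ absorbs the second-order error in the endpoint constraint while keeping $\gamma(w_\tau-\wh)=\tau^2\gamma_{\P}(\bar\xi,\ub,\yb,\hb)+o(\tau^2)$. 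Plugging $w_\tau$ into the Lagrangian expansion, using the $\gamma$-growth on the left, dividing by $\tau^2$ and letting $\tau\downarrow 0$ produces $\Omega_{\P_2}(\bar\xi,\ub,\yb,\hb)\ge 2\rho\,\gamma_{\P}(\bar\xi,\ub,\yb,\hb)$, which is the required coercivity (with $\rho$ renamed).

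The main obstacle is the sufficient direction, and specifically controlling the remainder $r(\bar w)$ relative to $\gamma$ rather than to $\|\bar w\|_\infty^2$. Because $\gamma$ only sees $\yb=\int\vb$, large oscillations in $\vb$ with small primitive are compatible with small $\gamma$ but not small $\|\vb\|_2$, so a naive expansion bound $r=o(\|\bar w\|_\infty^2)$ is insufficient. The fix is the identity \eqref{quadraticforms.relation} together with Proposition \ref{Goh.condition}, which ensures the $\vb$-quadratic terms cancel exactly; the remaining cross terms involving $\vb$ are integrated by parts against the trajectory equation, transferring all $\vb$-dependence to $\yb$ modulo terms that are truly $o(\gamma_k)$ thanks to the Lipschitz bound of Assumption \ref{datafunctions.lipschitz}. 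Once this reduction is in place, the rest of the argument is the standard weak-lower-semicontinuity machinery used by Dmitruk \cite{Dmi77} and extended to the partially affine setting in \cite{Aronna2018}.
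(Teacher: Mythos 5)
First, a point of comparison: the paper does not prove Theorem \ref{SOSC} at all --- it is imported from \cite{Aronna2018} (and, for the totally affine case, from Dmitruk \cite{Dmi77}) --- so there is no in-paper proof to measure yours against. Your sketch follows the standard Dmitruk-type route (second-order expansion of $\L$, Goh transform, normalization by $\sqrt{\gamma_k}$, weak compactness), and the converse direction is essentially right, modulo the usual care in constructing the feasible family $w_\tau$ and in checking that $\Omega_{\P_2}$ and $\gamma_{\P}$ are $L^2$-continuous so that the inequality extends from $\P$ to $\P_2$.

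The genuine gap is at the crux of the sufficiency direction. After normalizing so that the transformed variations $z_k$ satisfy $\gamma_{\P}(z_k)=1$ and extracting a weak limit $z_\infty\in\P_2$, you conclude that ``passing to the liminf and using \eqref{Omega.P2.coersity.unique} applied to the limit yields $0\ge\rho/2$.'' This does not follow: weak lower semicontinuity only gives $0\ge\liminf\Omega_{\P_2}(z_k)\ge\Omega_{\P_2}(z_\infty)\ge\rho\,\gamma_{\P}(z_\infty)$, and nothing so far excludes $z_\infty=0$, in which case the chain ends in $0\ge 0$ and there is no contradiction; the normalization $\gamma_{\P}(z_k)=1$ is not preserved under weak convergence. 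This is precisely where the proofs in \cite{Aronna2018,Dmi77} invoke the \emph{Legendre form} property of $\Omega_{\P_2}$, guaranteed by the pointwise condition of Corollary \ref{ColrecoverLCconditions}, i.e. $\left(\begin{smallmatrix}H_{uu}&E^T\\ E&R\end{smallmatrix}\right)\succeq\rho I$: either one writes $\Omega_{\P_2}=\rho\gamma_{\P}+(\Omega_{\P_2}-\rho\gamma_{\P})$ and uses that the second summand is nonnegative, hence weakly l.s.c., along sequences with vanishing linearized-constraint residuals, which together with $\gamma_{\P}(z_k)\equiv 1$ gives $0\ge\rho$; or one argues that $\Omega_{\P_2}(z_k)\to 0=\Omega_{\P_2}(z_\infty)$ forces strong convergence $z_k\to z_\infty=0$, contradicting $\gamma_{\P}(z_k)=1$. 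Relatedly, your assertion that $\ub_k/\sqrt{\gamma_k}$ converges strongly in $L^2$ is not a consequence of $H_{uu}\succ 0$ alone; that strong convergence is the \emph{output} of the Legendre-form argument, not an input to it. A second, smaller omission in the same direction: the $z_k$ are differences of solutions of the nonlinear system, so they do not belong to $\P_2$; one must first show that the linearized dynamics and endpoint residuals are $o(\sqrt{\gamma_k})$ before the limit can be placed in $\P_2$ and before $\Omega$ evaluated on the sequence can be compared with $\Omega_{\P_2}$.
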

\begin{corollary}[\cite{Aronna2018}]
	\label{ColrecoverLCconditions}
	Let $\hat w$ be a feasible trajectory satisfying the PMP with unique associated multiplier $\hat\lambda$ and satisfying the coercivity condition \eqref{Omega.P2.coersity.unique}, then
	\begin{equation}
		\label{coercivity.matrix}
		\left(\begin{array}{cc}
			H_{uu} & E^T \\
			E      & R
		\end{array}\right) \succeq \rho I, \quad \textnormal{a.e. on $[0,T]$}. 
	\end{equation}
\end{corollary}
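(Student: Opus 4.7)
The plan is to argue by contradiction through a localization of \eqref{Omega.P2.coersity.unique} in the spirit of classical derivations of pointwise Legendre--Clebsch conditions. Suppose the conclusion fails. Then, by a standard measure-theoretic argument (discretizing the unit sphere of $\mathbb{R}^{l+m}$ and extracting a level set of positive measure), there exist a set $\Sigma \subset [0,T]$ of positive measure, constant vectors $u_0 \in \mathbb{R}^l$, $y_0 \in \mathbb{R}^m$ with $|u_0|^2 + |y_0|^2 = 1$, and $\delta > 0$ such that
\begin{equation*}
	u_0^T H_{uu}(t) u_0 + 2 y_0^T E(t) u_0 + y_0^T R(t) y_0 < \rho - 2\delta, \quad \text{for all } t \in \Sigma.
\end{equation*}
Fix a Lebesgue point $t_0 \in \Sigma \cap (0,T)$ of the matrix-valued entries of $H_{uu}, E, R$ that is also a density point of $\Sigma$. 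For small $\epsilon > 0$ I would take the concentrated variations
\begin{equation*}
	\ub_\epsilon := \frac{u_0}{\sqrt{\epsilon}}\,\chi_{[t_0, t_0 + \epsilon]}, \qquad \yb_\epsilon := \frac{y_0}{\sqrt{\epsilon}}\,\chi_{[t_0, t_0 + \epsilon]},
\end{equation*}
which each have unit $L^2$-norm.

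The key step is to embed these controls into an admissible critical direction $(\bar\xi, \ub, \yb, \hb) \in \P_2$. I would integrate the linearized dynamics \eqref{dynamics.xi} starting from $\bar\xi(0) = 0$ with controls $(\ub_\epsilon, \yb_\epsilon)$: since $\norm{\ub_\epsilon}_{L^1} + \norm{\yb_\epsilon}_{L^1} = O(\sqrt{\epsilon})$, Gr\"onwall's inequality yields $\norm{\bar\xi}_\infty = O(\sqrt{\epsilon})$, whence $|\bar\xi(T)| = O(\sqrt{\epsilon})$. The endpoint constraint \eqref{endpoint.linearized.constraints.Goh} is generically not satisfied, so I would invoke Assumption \ref{qualification.constraints}, which guarantees a bounded right inverse of the linearized endpoint map and hence produces a correction $(\delta\xb_0, \delta\ub, \delta\yb, \delta\hb)$ of $L^\infty$-norm $O(\sqrt{\epsilon})$ restoring \eqref{endpoint.linearized.constraints.Goh}. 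The resulting direction lies in $\P_2$.

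Finally I would evaluate both sides of \eqref{Omega.P2.coersity.unique} on this direction. By the Lebesgue point property,
\begin{equation*}
	\int_0^T (\ub_\epsilon^T H_{uu} \ub_\epsilon + 2 \yb_\epsilon^T E \ub_\epsilon + \yb_\epsilon^T R \yb_\epsilon) \, dt \xrightarrow[\epsilon \to 0^+]{} u_0^T H_{uu}(t_0) u_0 + 2 y_0^T E(t_0) u_0 + y_0^T R(t_0) y_0 < \rho - 2\delta,
\end{equation*}
while the remaining contributions to $\Omega_{\P_2}$---the terms containing $\bar\xi$, the boundary term $g$, and the cross terms produced by the $O(\sqrt{\epsilon})$ correction---are all uniformly $O(\sqrt{\epsilon})$. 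On the other side $\gamma_\P = 1 + O(\sqrt{\epsilon})$, since the concentrated variations already carry unit $L^2$-mass. Thus for sufficiently small $\epsilon > 0$ we obtain $\Omega_{\P_2} < \rho\, \gamma_\P$, contradicting \eqref{Omega.P2.coersity.unique}.

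The main obstacle I anticipate is the quantitative bookkeeping of the endpoint-correction step: one must verify that Assumption \ref{qualification.constraints} indeed supplies a correction whose $L^2$-size is $O(\sqrt{\epsilon})$ and---crucially---whose own contribution to the pure-control part of $\Omega_{\P_2}$ is of lower order than the leading $O(1)$ term, so that it cannot absorb the margin $-2\delta$ gained by the localization. Keeping this decomposition clean is the technical heart of the argument.
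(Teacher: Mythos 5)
Your argument is sound, and it is the classical localization (``needle variation'') proof of pointwise Legendre--Clebsch-type conditions from coercivity of a quadratic form. The paper itself offers no proof of Corollary \ref{ColrecoverLCconditions}: it is quoted from \cite{Aronna2018}, where the argument is of exactly this type, so there is nothing in the text to compare against beyond noting that your route is the expected one.

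Two points deserve explicit care if you write this up. First, the admissibility of the concentrated variation $\yb_\epsilon$ in $\P_2$: as literally written, $\P_2$ carries the conditions $\yb(0)=0$, $\yb(T)=\hb$, which are meaningless for a general $L^2$ function; the correct reading (consistent with \cite{Dmi77,Aronna2018}) is that $\P_2$ is the $L^2$-closure of $\P$, in which $\yb$ ranges freely over $L^2$ and $\hb$ decouples from $\yb(T)$, subject only to \eqref{dynamics.xi} and \eqref{endpoint.linearized.constraints.Goh}. Under that reading your bump is admissible; you should say so, since otherwise the whole construction is blocked. Second, the endpoint-correction step is fine but should be phrased as: Assumption \ref{qualification.constraints} makes the linearized endpoint map $(\bar{\xi}(0),\ub,\yb,\hb)\mapsto D\eta\left(\bar{\xi}(0),\bar{\xi}(T)+f_v(T)\hb\right)$ onto $\mathbb{R}^{d_\eta}$, hence it admits a bounded right inverse, giving a correction of $L^2$-size $O(\sqrt{\epsilon})$; the cross terms with the unit-norm concentrated direction are then $O(\sqrt{\epsilon})$ by Cauchy--Schwarz, exactly as you anticipate. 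With those two clarifications the proof is complete.
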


Goh stated in \cite{Goh66} that \eqref{coercivity.matrix} can be used to recover the strengthened Legendre-Clebsch condition \eqref{LC-like.mixedcontrols}. This result (see Proposition \ref{coercivity_implies_LC} below) is of great use since condition \eqref{LC-like.mixedcontrols} is necessary to obtain the controls in feedback form and assemble the optimality system (OS), as done in Theorem \ref{OC-OSequivalence}. To prove this implication we use the following Lemma \ref{goh_computations} that can be found in \cite{GohThesis, Goh66} and that was used in the literature by numerous authors. Nevertheless, since we believe that in Goh's work \cite{Goh66} there were some miscalculations, we included a revisited proof of Lemma \ref{goh_computations} in Appendix \ref{appendix_Goh.computations}.
\begin{lemma}
	\label{goh_computations}
	The following identities hold:
	\begin{equation}
	E = -\frac{\partial \dot{H}_v}{\partial u} \quad \text{ and } \quad R - EH_{uu}^{-1}E^T = -\frac{\partial \ddot{H}_v}{\partial v}.
	\end{equation}
\end{lemma}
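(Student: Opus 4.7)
My plan is direct computation along the extremal, repeatedly using the Hamiltonian equations $\dot{\xh}=f$, $\dot{\ph}=-H_x$, the Legendre--Clebsch identity $H_{vu}=0$ (equivalently $\ph\cdot D_u f_j=0$), and Goh's conditions $\ph\cdot[f_i,f_j]=0$ for $i,j\ge 1$, together with their time derivatives along the trajectory. The key device is that these pointwise identities, when differentiated along the trajectory, produce exactly the relations needed to absorb residual $\dot u$-dependent terms.

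For the first identity, starting from $H_{v_j}=p\cdot f_j$ and using the Hamiltonian equations, one gets $\dot H_{v_j}=p\,[f,f_j]+p\,D_u f_j\,\dot u$ as a formal function of $(x,u,v,p,\dot u)$. Taking $\partial/\partial u_i$ with $\dot u$ frozen yields four bracket-type terms plus $p\,D_{u u_i}f_j\,\dot u$. Time-differentiating $H_{v_j u_i}=p\cdot D_{u_i}f_j=0$ gives
\[
p\,D_{u u_i}f_j\,\dot u = p\,D_x f\cdot D_{u_i}f_j - p\,D_{u_i x}f_j\cdot f,
\]
and substituting this cancels two of the four bracket-type terms, leaving $-\partial\dot H_{v_j}/\partial u_i = p\bigl(D_{u_i x}f\cdot f_j - D_x f_j\cdot D_{u_i}f\bigr)$. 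An entrywise comparison with $E=f_v^T H_{ux}^T - H_{vx}f_u$, after expanding $f=f_0+\sum_\ell v_\ell f_\ell$, yields the first identity.

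For the second identity, time-differentiate the post-Goh expression $\dot H_{v_j}=\ph\cdot[f_0,f_j]$ along the trajectory to obtain $\ddot H_{v_j}=p\,[f,[f_0,f_j]] + p\,D_u[f_0,f_j]\,\dot u$, then substitute $\dot u=\{U,H\}=\{U,p f_0\}+\sum_i v_i\{U,p f_i\}$ using the feedback $u=U(x,p)$ coming from the IFT on $H_u=0$. The partial $\partial\ddot H_v/\partial v_i$ then splits into a \emph{direct} piece $p\,[f_i,[f_0,f_j]]$ and an \emph{indirect} piece $p\,D_u[f_0,f_j]\cdot\{U,p f_i\}$. Differentiating $H_u(x,U(x,p),V(x,p),p)=0$ in $x$ and $p$ gives $U_x=-H_{uu}^{-1}H_{ux}$ and $U_p=-H_{uu}^{-1}f_u^T$; hence $\{U,p f_i\}=-H_{uu}^{-1}\bigl(H_{ux}f_i-f_u^T(pf_{i,x})^T\bigr)$, and the first identity (read entrywise) identifies the components of $H_{ux}f_i-f_u^T(pf_{i,x})^T$ with entries of $E$. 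This produces the Schur-complement form $(EH_{uu}^{-1}E^T)_{ij}$ for the indirect piece. Matching the direct piece with $-R_{ij}$ is then a matter of expanding $R=f_v^T H_{xx}f_v-(H_{vx}B+(H_{vx}B)^T)-\dot S$ with $B=f_x f_v-\dot f_v$ and $S=\tfrac12(H_{vx}f_v+(H_{vx}f_v)^T)$, computing the time derivatives $\dot S,\dot f_v$, and collapsing antisymmetric terms via Goh's conditions.

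The main obstacle is this final matching step. The antisymmetric part $G=\tfrac12(H_{vx}f_v-(H_{vx}f_v)^T)$ has entries $-\ph\cdot[f_i,f_j]$ and vanishes by Goh's conditions; its time derivative $\dot G$ vanishes as well, as follows from time-differentiating $p[f_l,f_i]=0$ to get $p[f,[f_l,f_i]]=-p\,D_u[f_l,f_i]\,\dot u$. These vanishings are what force the direct-$v$ terms in $p[f,[f_0,f_j]]$ to collapse cleanly into the symmetric $R$, in parallel with the Schur-complement reduction from the $\dot u$-dependence. Keeping track of these antisymmetric cancellations is the delicate bookkeeping step where, as the authors note, Goh's original derivation went wrong; no new structural insight is required beyond careful algebra.
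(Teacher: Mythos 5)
Your treatment of the first identity is fine and in fact more explicit than the paper's (which simply asserts it from the definition of $E$): you correctly differentiate the \emph{full} expression $\dot H_{v_j}=\ph\cdot[f,f_j]+\ph\cdot D_uf_j\,\dot{\uh}$ and use the total time derivative of $H_{v_ju_i}=0$ to eliminate the $\ph\cdot D_{uu_i}f_j\,\dot{\uh}$ term, landing on $E_{ji}$.

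The second identity has a genuine gap, and it is exactly the pitfall the paper's appendix is written to avoid. You differentiate the \emph{reduced} (post-Goh, post-Legendre--Clebsch) expression $\ddot H_{v_j}=\ph\cdot[f,[f_0,f_j]]+\ph\cdot D_u[f_0,f_j]\dot{\uh}$ with respect to $v_i$. But the full second derivative \eqref{Hvddot} also contains the terms $\sum_k v_k\tfrac{\dd}{\dd t}\bigl(\ph\cdot[f_k,f_j]\bigr)$ and $\tfrac{\dd}{\dd t}\bigl(H_{v_ju}\bigr)\dot{\uh}$, which vanish \emph{along the extremal} but are not identically zero as functions of $(x,u,v,p)$; their partial derivatives in $v$ therefore survive and contribute $\sum_k v_k\bigl(\ph\cdot[f_i,[f_k,f_j]]+\ph\cdot D_u[f_k,f_j]\,\partial_{v_i}\dot{\uh}\bigr)$ plus $\ph\cdot\bigl(\tfrac{\partial^2 f_j}{\partial x\partial u}f_i-\tfrac{\partial f_i}{\partial x}\tfrac{\partial f_j}{\partial u}\bigr)\dot{\uh}+\dot{\uh}^TH_{v_juu}\,\partial_{v_i}\dot{\uh}$ (the terms $A_k$ and $B$ in \eqref{DvddotHv}). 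These are precisely what upgrades your direct piece $\ph\cdot[f_i,[f_0,f_j]]$ to $\ph\cdot[f_i,[f,f_j]]$ plus the $\dot{\uh}$-dependent part of $-R_{ji}$ (Claim \ref{claim1}), and your indirect coefficient $\ph\cdot D_u[f_0,f_j]$ to the full $-E_{(j,:)}=\ph\cdot D_u[f,f_j]+\dot{\uh}^TH_{v_juu}$ (Claim \ref{claim3}). Without them your two pieces do not match $R$ and $E$ as defined in \eqref{matrixM}--\eqref{matrixR}: for instance $R$ contains $\dot S$, which carries the term $\ph\cdot\bigl(\tfrac{\partial f_i}{\partial x}\tfrac{\partial f_j}{\partial u}+\tfrac{\partial^2 f_i}{\partial x\partial u}f_j\bigr)\dot{\uh}$, and nothing in your reduced computation produces it. The appeal to $G=0$ and $\dot G=0$ does not close this: the identity $\ph\cdot[f,[f_k,f_j]]=-\ph\cdot D_u[f_k,f_j]\dot{\uh}$ is the \emph{time} derivative of $\ph\cdot[f_k,f_j]=0$ and involves $[f,\cdot\,]$ and $\dot{\uh}$, whereas the missing terms involve $[f_i,\cdot\,]$ and $\partial_{v_i}\dot{\uh}$; a quantity that vanishes along the extremal has vanishing total time derivative, but not vanishing partial derivatives. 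To repair the argument you must start from the full expression \eqref{Hvddot}, as the paper does, and only then discard the terms whose coefficients vanish identically.
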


\begin{proposition}
	\label{coercivity_implies_LC}
	Let $\wh$ be a feasible trajectory satisfying the coercivity condition \eqref{coercivity.matrix}. Then, the strengthened Legendre-Clebsch conditions, in the form of \eqref{LC-like.mixedcontrols}, hold.  
\end{proposition}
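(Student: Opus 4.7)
The plan is to read off the two desired strict inequalities from the positive-definiteness of the $2 \times 2$ block matrix in \eqref{coercivity.matrix} by a Schur-complement argument, and then translate the second one via Lemma \ref{goh_computations}.

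First I would dispatch $H_{uu} \succ 0$ directly. Testing the quadratic form associated with the block matrix against the vector $(\ub, 0)$ for arbitrary $\ub \in \mathbb{R}^l$ gives $\ub^T H_{uu} \ub \geq \rho |\ub|^2$, so $H_{uu} \succeq \rho I$, which in particular yields invertibility and strict positivity a.e.\ on $[0,T]$.

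Next I would handle $-\partial \ddot{H}_v/\partial v \succ 0$. Fix $\yb \in \mathbb{R}^m$ arbitrary, and plug $(\ub, \yb)$ with the minimizing choice $\ub := -H_{uu}^{-1} E^T \yb$ (well-defined by the previous step) into the quadratic form. The cross term and the $\ub^T H_{uu} \ub$ term combine to $-\yb^T E H_{uu}^{-1} E^T \yb$, leaving
\begin{equation*}
\yb^T\bigl(R - E H_{uu}^{-1} E^T\bigr)\yb \;\geq\; \rho\bigl(|H_{uu}^{-1} E^T \yb|^2 + |\yb|^2\bigr) \;\geq\; \rho |\yb|^2.
\end{equation*}
Hence the Schur complement satisfies $R - E H_{uu}^{-1} E^T \succeq \rho I$.

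Finally I would apply the second identity of Lemma \ref{goh_computations}, namely $R - E H_{uu}^{-1} E^T = -\partial \ddot{H}_v / \partial v$, to translate this into $-\partial \ddot{H}_v / \partial v \succeq \rho I$, which is strict positivity and completes \eqref{LC-like.mixedcontrols}. No step is really an obstacle here: the whole argument is standard linear algebra once Lemma \ref{goh_computations} is granted, and the only mild subtlety is justifying the pointwise use of the coercivity inequality \eqref{coercivity.matrix}, which holds almost everywhere on $[0,T]$ and hence so do the two strengthened conditions.
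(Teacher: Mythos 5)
Your proof is correct and follows essentially the same route as the paper: both arguments reduce the block matrix in \eqref{coercivity.matrix} to the Schur complement $R - EH_{uu}^{-1}E^T$ (the paper via an explicit block-triangular congruence $Q$, you via testing the quadratic form at the minimizing $\ub = -H_{uu}^{-1}E^T\yb$, which is the same computation) and then invoke Lemma \ref{goh_computations} to identify it with $-\partial \ddot{H}_v/\partial v$. The only cosmetic difference is that your version also retains the quantitative lower bound $\succeq \rho I$, which is slightly more than \eqref{LC-like.mixedcontrols} requires.
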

\begin{proof}
The argument is inspired by the discussion from Goh in \cite{goh2008optimal}. If the matrix in \eqref{coercivity.matrix} is positive definite then, for any $Q \in \mathbb{R}^{(l+m) \times (l+m)}$, we have
\begin{equation}
	a^TQ^T 
	\left(\begin{array}{cc}
	H_{uu} & E^T \\
	E      & R
	\end{array}\right)
	Qa > 0,
\end{equation}
provided that the vector $a$ is not in the kernel of $Q$. Therefore, in order for the product matrix to be positive definite, it suffices to choose $Q$ with full rank. 

Setting 
	$Q:={\small \left(\begin{array}{cc}
		I & -(H_{uu})^{-1}E^T \\
		0 & I
		\end{array}\right)^T}
	$, we check that
	{\small
		\begin{equation*}
		Q^T 
		\left(\begin{array}{cc}
		H_{uu} & E^T \\
		E      & R
		\end{array}\right)
		Q = 
		\left(\begin{array}{cc}
		H_{uu} & 0 \\
		0      & R - E(H_{uu})^{-1}E^T
		\end{array}\right) = \left(\begin{array}{cc}
		H_{uu} & 0 \\
		0      & - \disp \frac{\partial \ddot{H_v}}{\partial v}
		\end{array}\right),
		\end{equation*}}
	where the last equality comes from Lemma \ref{goh_computations}. Since the matrix $Q$ is non singular, \eqref{LC-like.mixedcontrols} follows.
\end{proof}

\section{Convergence of the Shooting Algorithm}
\label{converge_shooting}
Now we turn to the proof of convergence for the proposed shooting scheme. For this we formulate an auxiliary linear quadratic system as follows. 
\if{The logical steps of the proof will be as follows
{\color{red}
\begin{enumerate}[i]
	\item We define an auxiliary linear quadratic (LQ) problem that presents a single solution once we assume the coercivity condition \eqref{Omega.P2.coersity.unique} from Theorem \ref{SOSC};
	\item Propose a one-to-one transformation of Goh type that has the property of mapping the solutions of the linearization of (OS), the optimality system obtained from the original problem (OC), into the solutions of the optimality system that arises from the new auxiliary problem;
	\item The coercivity condition will imply that the linearization is around a weak minimum for the original problem; 
	\item Since the only solution from the linear quadratic system is null and the transformation is one-to-one the solution from the linearized original system is also null; 
	\item This will imply that the derivative of the shooting function is injective and the local convergence of the Gauss-Newton method will follow. 
\end{enumerate}}
}\fi
\subsection{The auxiliary linear quadratic problem}
Let (LQ) denote the optimal control problem defined by \eqref{LQ.cost}-\eqref{LQ.constraints} below
\begin{align}
	\label{LQ.cost}
	\text{minimize } &  \Omega_{\P_2}(\xib, \ub, \yb, \hb)\\
	\nonumber \text{subject to} & \\
	\label{LQ.dynamics}
	 & \dot{\bar{\xi}} = f_x\bar{\xi} + f_u\ub + B\yb,\\
	 \label{h.dyanmics}
	& \dot{\hb} = 0,\\
	\label{LQ.constraints}
	& 0 = D \eta_j(\xh(0), \xh(T))\left(\bar{\xi}(0), \bar{\xi}(T) + f_v(T)\hb\right),
\end{align}
where $\ub$ and $\yb$ denote the control variables, $\xib$ and $\hb$ are the states. Note that the feasible trajectories of (LQ) are the critical directions in $\P_2$. Once the coercivity condition \eqref{Omega.P2.coersity.unique} is assumed, the unique optimal solution of (LQ) is $(\bar\xi, \ub, \yb, h) = 0$.

In order to prove that the derivative of the shooting function $\cals$ is injective at a weak minimum, we exploit the correspondence between solutions of (LQ) and solutions of the linearized system (LS) (see Lemma \ref{LS-LQS_equivalence} below).

Let $\chib$ and $\chib_h$ denote the costates associated with $\xib$ and $\hb$, respectively. The qualification condition for the original problem given in Assumption \ref{qualification.constraints} easily translates into an analogous constraint qualification for problem (LQ). Consequently, the weak minimizer $(\xib, \ub, \yb, \hb) = 0$ of (LQ) also has a unique multiplier, which we shall refer as $\lambda^{LQ} := \left(\chib, \chib_h, \beta^{LQ} \right)$.

Define the pre-Hamiltonian for problem (LQ) and the endpoint Lagrangian as
{\small 
\begin{multline*}
		\H(\xib, \ub, \yb,\chib):= \chib (f_x\bar{\xi} + f_u\ub + B\yb) 
		 \\+ \half\bar{\xi}^TH_{xx}\bar{\xi}
		   + \ub^TH_{ux}\bar{\xi}
		 + \yb^TM\bar{\xi} + \half\ub^TH_{uu}\ub + \yb^TE\ub + \half\yb^TR\yb,\end{multline*}}
{\small 
\begin{equation*}
	\ell^{LQ}\left(\xib_0, \xib_T, \hb, \beta^{LQ}\right) := \half g(\xib_0, \xib_T, \hb)  
	+ \sum_{j = 1}^{d_\eta} \beta^{LQ}_jD \eta_j\left(\bar{\xi}_0, \bar{\xi}_T + f_v(T)\hb\right),
\end{equation*}}
respectively,
where $g$ was defined in \eqref{function.g}. The costate dynamics becomes
\begin{equation}
	\label{LQ.costatedynamics}
	-\dot{\chib} = \frac{\partial\H}{\partial \xib} = \chib f_x + \xib^TH_{xx} + \ub^TH_{ux} + \yb^TM, 
\end{equation}
with transversality conditions
{\small\begin{align}
	\label{LQ.costateinicial}
		\chib(0) & \disp 
				 = -\xib^T(0)D^2_{x_0^2}\ell + (\xib(T) + f_v(T)\hb)^TD^2_{x_0x_T}\ell + \sum_{j = 1}^{d_{\eta}}D_{x_0}\eta_j,\\
	\label{LQ.costatefinal}
		\chib(T) & \disp 
		=\xib^T(T)D^2_{x_T^2}\ell + \xib^T(0)D^2_{x_0x_T}\ell + \hb^TH_{vx}(T) + \sum_{j = 1}^{d_{\eta}}D_{x_T}\eta_j.
\end{align}}
The costate variable $\chib_h$ vanishes identically since $\dot{\chib}_h = 0$ and $\chib_h(0) = 0 $.
Finally, the stationarity of the Hamiltonian gives
\begin{align}
	\label{LQ.stationary_u}
	0 = \H_{\ub} &= \chib f_u + \xib^TH_{xu}^T + \ub^TH_{uu} + \yb^TE,\\
	\label{LQ.stationary_y}
	0 = \H_{\yb} &= \chib B + \xib^TM^T + \ub^TE^T + \yb^TR.
\end{align}
The set of equations \eqref{LQ.dynamics}-\eqref{LQ.constraints}, \eqref{LQ.costatedynamics}-\eqref{LQ.costatefinal} and \eqref{LQ.stationary_u}-\eqref{LQ.stationary_y} will be referred as the Linear Quadratic System (LQS). Notice that for this system, the matrix of the Legendre-Clebsch condition takes the form
\begin{equation}
	D_{(\ub, \yb)^2}^2\H =
	\left(
	\begin{array}{cc}
		H_{uu} & E^T\\
		E      & R
	\end{array}
	\right).
\end{equation}
Hence, if we assume coercivity for the original problem,  Corollary \ref{ColrecoverLCconditions} implies that $D_{(\ub, \yb)^2}^2\H$ is uniformly positive definite and then, solving the linear quadratic optimal control problem (LQ) is equivalent to solving its optimality condition (LQS).

\subsection{Linking the auxiliary problem with the optimality system}
Define the mapping
\begin{equation}
(\xb, \ub, \vb, \pb, \beta) \mapsto \left(\xib, \ub, \yb, \hb, \chib, \chib_h, \beta^{LQ}\right)
\end{equation}
through the equations
\begin{equation}
\label{mappingLS-LQS}
\begin{split}
\disp \yb(t) := \int_0^t\vb(s)\dd s, \qquad \xib := \xb - f_v\yb,\qquad \chib := \pb + \yb^{T}H_{vx},\\
\chib_h := 0, \qquad \hb := \yb(T), \qquad \beta^{LQ} := \beta.
\end{split}
\end{equation}
This Goh-type transformation is clearly one-to-one.
Recalling the linearization (LS) of the optimality system \eqref{optimality.system}, we show that this transformation maps solutions of (LS) into solutions of (LQS). Afterwards we shall use this property and the coercivity condition \eqref{Omega.P2.coersity.unique} to deduce the uniqueness of solution of (LS).

\begin{lemma}
	\label{LS-LQS_equivalence}
	If $\wh$ is a weak minimum of \textnormal{(OC)}, the injective mapping $(\xb, \ub, \vb, \pb, \beta) \mapsto (\xib, \ub, \yb, \hb, \chib, \chib_h, \beta^{LQ})$ defined in \eqref{mappingLS-LQS} converts solutions of \textnormal{(LS)} into solutions of \textnormal{(LQS)}.
\end{lemma}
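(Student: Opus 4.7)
The argument is an equation-by-equation verification that the Goh-type transformation \eqref{mappingLS-LQS} maps every solution of (LS) to a solution of (LQS); together with the obvious fact that the transformation is one-to-one, this gives the claim. I take an arbitrary $(\xb, \ub, \vb, \pb, \beta)$ satisfying (LS), set $(\xib, \ub, \yb, \hb, \chib, \chib_h, \beta^{LQ})$ by \eqref{mappingLS-LQS}, and check each equation of (LQS) in turn.

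\textbf{Primal side and transversality.} Differentiating $\xib = \xb - f_v \yb$ and inserting \eqref{statedynamics.linearized} together with $\xb = \xib + f_v \yb$ and the definition \eqref{matrixB} of $B$ recovers \eqref{LQ.dynamics}; the initial condition $\xib(0)=\xb(0)$ is immediate since $\yb(0)=0$. The equation $\dot{\hb}=0$ is trivial, $\chib_h\equiv 0$ is built into the definition, and the endpoint constraints \eqref{LQ.constraints} follow from \eqref{endpoint.linearized.constraints} upon writing $\xb(0)=\xib(0)$ and $\xb(T)=\xib(T)+f_v(T)\hb$. For the costate dynamics, differentiating $\chib = \pb + \yb^T H_{vx}$ and using \eqref{costatedynamics.linearized} (with $H_{xp}=f_x$) shows the $\vb^T H_{vx}$ terms cancel; regrouping with $\xb = \xib + f_v \yb$, $\pb = \chib - \yb^T H_{vx}$, and the definition \eqref{matrixM} of $M$, I obtain \eqref{LQ.costatedynamics}. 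The transversality identities \eqref{LQ.costateinicial} and \eqref{LQ.costatefinal} are derived from \eqref{transversality_0.linearized}-\eqref{transversality_T.linearized} using $\chib(0)=\pb(0)$ (from $\yb(0)=0$) and $\chib(T)=\pb(T)+\hb^T H_{vx}(T)$.

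\textbf{Stationarity.} Condition \eqref{LQ.stationary_u} is obtained by substituting the inverse transformation into \eqref{linHu}: the combination $f_v^T H_{ux}^T - H_{vx} f_u$ is exactly $E^T$ by \eqref{matrixM}. For \eqref{LQ.stationary_y}, I first note that equations \eqref{linHv}-\eqref{lindotHv0} of (LS) force $\textnormal{Lin}\,\dot{H}_v \equiv 0$ on $[0,T]$ (integrate $\textnormal{Lin}\,\ddot H_v = 0$ from the boundary condition $\textnormal{Lin}\,\dot H_v(0)=0$). Rewriting $\dot H_v = -pB$ along the nominal trajectory, linearizing this identity and then substituting $\pb = \chib - \yb^T H_{vx}$ and $\xb = \xib + f_v \yb$, the coefficients of $\chib$, $\xib^T$, $\ub^T$ and $\yb^T$ assemble into precisely $\chib B + \xib^T M^T + \ub^T E^T + \yb^T R$. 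The matrix $R$ from \eqref{matrixR} emerges because the time derivatives of $f_v$ and $H_{vx}$ produced by $\textnormal{Lin}(pB)$ repackage into $\dot S$, while Goh's condition (Proposition \ref{Goh.condition}) wipes out the antisymmetric contribution $G$ that would otherwise appear.

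\textbf{Main obstacle.} The only non-routine piece is the assembly of \eqref{LQ.stationary_y}: it requires careful tracking of time derivatives of $f_v$ and $H_{vx}$ and identifying them with $M$, $E$ and, most delicately, $R$. Two structural identities are crucial here, both already at our disposal: the Legendre-Clebsch consequence $H_{uv}=0$ from \eqref{LegendreClebsch.mixed.equivalent}, which discards the $\dot{\uh}$-terms that a priori appear in the time derivatives, and Goh's condition, which removes the $G$-contribution. These are exactly the same algebraic facts invoked in passing from $\Omega$ to $\Omega_{\P_2}$ via \eqref{quadraticforms.relation}, so in essence the lemma is the Hamiltonian-side shadow of that transformation.
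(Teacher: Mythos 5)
Your proposal is correct and follows essentially the same route as the paper's proof: an equation-by-equation verification of (LQS) under the Goh-type substitution, using $\yb(0)=0$ for the initial/transversality conditions, the symmetry of $H_{vx}f_v$ (from Goh's condition) and $H_{uv}=0$ for the cancellations, and the definitions of $M$, $E$, $B$, $R$ to assemble the costate dynamics and stationarity conditions. The only cosmetic difference is in \eqref{LQ.stationary_y}: the paper substitutes into $\textnormal{Lin}\,H_v$ and differentiates once in time, whereas you integrate $\textnormal{Lin}\,\ddot H_v=0$ to get $\textnormal{Lin}\,\dot H_v\equiv 0$ and linearize $\dot H_v=-pB$ directly — the same computation organized in a different order.
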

The proof of this lemma is left for the Appendix \ref{appendix_Goh.computations}.

\if{
\begin{proof}
	We must check that given a solution $(\xb, \ub, \vb, \pb, \beta)$ of (LS), the corresponding transformed variables $(\xib, \ub, \yb, \hb, \chib, \chib_h, \beta^{LQ})$ solve (LQS).
	
	Starting with the state $\xib$, we recall the dynamics of the linearized variable $\xb$ given in \eqref{statedynamics.linearized} so that one has $\dot{\xib} = \dot{\xb} - \dot{f}_v\yb - f_v\dot{\yb} = f_x\xib + f_u\ub + B\yb,$ retrieving the dynamics in \eqref{LQ.dynamics}. The initial conditions are trivially satisfied since $\yb(0) = 0$. The dynamics for $\hb$ are satisfied by the definition. 
	For the costate dynamics we recall the dynamics of the linearized costates from \eqref{costatedynamics.linearized} and the definition of the matrix $M$ in \eqref{matrixM}. We get
	\begin{align*}
		-\dot{\chib} &= - \dot{\pb} - \dot{\yb}^TH_{vx} - \yb^T\dot{H}_{vx}\\
					 &= \underbrace{(\pb + \yb^TH_{vx})}_{=\chib}f_x + \underbrace{(\xb - f_v\yb)^T}_{=\xib^T}H_{xx} + \yb\underbrace{(f_v^TH_{xx} - \dot{H}_{vx} - H_{vx}f_x)}_{=M}\\
					 &= \chib f_x + \xib^TH_{xx} + \yb^TM.
	\end{align*}
	Hence the dynamics of $\chib$ matches \eqref{LQ.costatedynamics}. From equation \eqref{mappingLS-LQS} we obtain $\chib(0) = \pb(0)$ and deduce \eqref{LQ.costateinicial}. For the final conditions one substitutes the expressions for $\xb(T)$ and $\pb(T)$ into \eqref{linHvT} and conclude since $S = H_{vx}f_v = f^T_vH_{vx}^T,$	which is a consequence of the Goh conditions \eqref{Goh.condition}.This way we recover the transversality condition for $\chib(T)$. 
	
	Finally we must check the stationarity \eqref{LQ.stationary_u} and \eqref{LQ.stationary_y} of the Hamiltonian for (LQS). Starting from \eqref{linHu} and \eqref{mappingLS-LQS}, we obtain
	\begin{align*}
		0 &= (\bar\chi - \yb^TH_{vx})f_u + (\bar\xi + f_v\yb)^TH_{ux}^T +   \ub^TH_{uu}\\
		  &= \bar\chi f_u + \bar\xi^TH_{ux}^T + \ub^TH_{uu} + \yb^T(\underbrace{f_v^TH_{ux}^T - H_{vx}f_u}_{=E}),
 	\end{align*}
	which corresponds to the stationarity with respect to $\ub$. On the other hand, the same substitutions applied to \eqref{linHv} yield $0 = \chib f_v + \xib^TH_{vx}^T.$
	Differentiating with respect to time and using the definitions of $B$ in \eqref{matrixB} and $E$ in \eqref{matrixM}, we recover the stationarity \eqref{LQ.stationary_y} with respect to $\yb$. This shows that the tuple $(\xib, \ub, \yb, \hb, \chib, \chib_h, \beta^{LQ})$ is a solution of (LQS) and concludes the proof.
\end{proof}
}\fi 

\subsection{Convergence of the shooting algorithm}
We are in position to prove the convergence of the shooting algorithm given in \eqref{nu.update}-\eqref{linear.regression.solution}. We will use the following result on the behavior of the Gauss-Newton algorithm.
\begin{proposition}[\cite{bonnans2006numerical,fletcher2013practical}]
	\label{convergence_criteria_GN}
	If the matrix $\mathcal{S}'(\nuh)$ is injective, then the Gauss-Newton algorithm \eqref{nu.update}-\eqref{linear.regression.solution} is locally convergent. If in addition $\mathcal{S}'$  is Lipschitz continuous, then the algorithm converges locally quadratically.
\end{proposition}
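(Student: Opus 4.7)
The plan is to exploit the zero-residual property $\cals(\hat\nu) = 0$, which holds because $\hat\nu$ encodes the values at $t=0$ of the optimal extremal satisfying \eqref{optimality.system}. At a zero of $\cals$, the Gauss-Newton iteration coincides asymptotically with a Newton iteration for the nonlinear equation $\cals(\nu)=0$, so the standard Newton convergence theory applies once we show that the "pseudo-inverse" built from $\cals'$ is well-defined and uniformly bounded near $\hat\nu$.

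More concretely, I would proceed in three steps. First, set $M(\nu) := (\cals'(\nu)^T\cals'(\nu))^{-1}\cals'(\nu)^T$. Injectivity of $\cals'(\hat\nu)$ makes $\cals'(\hat\nu)^T\cals'(\hat\nu)$ non-singular; continuity of $\cals'$ (which follows from smoothness of the data via classical dependence-on-initial-conditions results for the ODE \eqref{dynamics.substituted}) then gives a neighborhood $\mathcal{N}$ of $\hat\nu$ on which $M(\nu)$ is well-defined, continuous, and bounded by some constant $C>0$. Note $M(\nu)\cals'(\nu) = I$ on $\mathcal{N}$.

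Second, writing $e_k := \nu_k - \hat\nu$ and using $\cals(\hat\nu)=0$, a direct manipulation of \eqref{nu.update}--\eqref{linear.regression.solution} gives the error recursion
\begin{equation*}
e_{k+1} = e_k - M(\nu_k)\bigl[\cals(\nu_k) - \cals(\hat\nu)\bigr]
        = M(\nu_k)\bigl[\cals'(\nu_k)e_k - \cals(\nu_k) + \cals(\hat\nu)\bigr],
\end{equation*}
where I used $M(\nu_k)\cals'(\nu_k)e_k = e_k$. The bracketed term is the standard Taylor remainder of $\cals$ at $\nu_k$ evaluated at $\hat\nu$. Continuity of $\cals'$ makes it $o(\|e_k\|)$, which together with the uniform bound on $M$ yields local linear (in fact superlinear) convergence once $\nu_0$ is taken sufficiently close to $\hat\nu$ so that all iterates remain in $\mathcal{N}$. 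If moreover $\cals'$ is Lipschitz, the remainder is bounded by $L\|e_k\|^2/2$ for some constant $L$, and the same estimate then gives $\|e_{k+1}\| \le (CL/2)\|e_k\|^2$, i.e., local quadratic convergence.

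Third, I would verify that $\cals$ is indeed of class $\C^1$ (and $\cals'$ Lipschitz under Assumption \ref{datafunctions.lipschitz}). This reduces to differentiability of the flow map associated with \eqref{dynamics.substituted} with respect to $(x_0,p_0)$; the feedback laws $U,V$ are $\C^1$ by Theorem \ref{OC-OSequivalence} and the right-hand side of \eqref{dynamics.substituted} inherits the regularity of $f_0,\dots,f_m$ and their derivatives, so classical ODE theory gives the required smoothness. The main obstacle, and the only non-routine point, is keeping the iterates inside $\mathcal{N}$; this is handled by an elementary induction using the invertibility of $M$ on $\mathcal{N}$ and a sufficiently small initial radius, exactly as in the classical Kantorovich-type argument for Gauss-Newton found in \cite{bonnans2006numerical,fletcher2013practical}.
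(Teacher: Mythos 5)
Your argument is correct. Note that the paper itself gives no proof of this proposition --- it is quoted from \cite{bonnans2006numerical,fletcher2013practical} --- so what you have written is a self-contained reconstruction of the standard zero-residual Gauss--Newton analysis, and it is sound. The one genuinely essential ingredient, which you correctly identify and use, is that $\hat\nu$ is an exact root, $\cals(\hat\nu)=0$: injectivity of $\cals'(\hat\nu)$ alone would not give quadratic (or even guaranteed local) convergence for a nonzero-residual least-squares problem, since the neglected curvature term $\sum_i \cals_i(\nu)\cals_i''(\nu)$ would then enter the error recursion. Your error recursion $e_{k+1}=M(\nu_k)\bigl[\cals'(\nu_k)e_k-\cals(\nu_k)+\cals(\hat\nu)\bigr]$ is exactly right, the identity $M(\nu_k)\cals'(\nu_k)=I$ is what makes the pseudo-inverse behave like a true inverse on the relevant subspace, and the Taylor-remainder bounds ($o(\|e_k\|)$ under continuity of $\cals'$, $\tfrac{L}{2}\|e_k\|^2$ under Lipschitz continuity) deliver the two conclusions of the proposition. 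Your third step, checking that $\cals$ is $\C^1$ with locally Lipschitz derivative via smooth dependence of the flow of \eqref{dynamics.substituted} on $(x_0,p_0,\beta)$ together with the $\C^1$ feedback laws of Theorem \ref{OC-OSequivalence}, is precisely the regularity the paper invokes (via Assumption \ref{datafunctions.lipschitz}) when it applies the proposition in Theorem \ref{convergence.unconstrained-problem}.
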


The main result of this article is the theorem below that states a sufficient condition for the local quadratic convergence of the shooting algorithm.

\begin{theorem}[Convergence of the shooting algorithm]
	\label{convergence.unconstrained-problem}
	Let $\hat w$ be a feasible trajectory satisfying the \textnormal{PMP} that verifies the coercivity condition \eqref{Omega.P2.coersity.unique}. Then the shooting algorithm is locally quadratically convergent.
\end{theorem}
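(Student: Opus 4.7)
My plan is to invoke Proposition \ref{convergence_criteria_GN} and reduce the theorem to verifying its two hypotheses: Lipschitz continuity and injectivity of $\cals'(\hat\nu)$. The Lipschitz regularity of $\cals'$ is essentially a smoothness bookkeeping exercise. By Assumption \ref{datafunctions.lipschitz}, the data functions have Lipschitz second derivatives; the feedback laws $U,V$ inherit $C^{1,1}$-regularity from the Implicit Function Theorem (applicable thanks to \eqref{LC-like.mixedcontrols}, which itself follows from the coercivity hypothesis via Proposition \ref{coercivity_implies_LC}); and standard ODE dependence-on-initial-data results then imply that the flow associated with \eqref{dynamics.substituted}, its sensitivity with respect to $\nu$, and hence $\cals'$ are Lipschitz in a neighbourhood of $\hat\nu$.

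The crux is therefore the injectivity of $\cals'(\hat\nu)$. The plan is to interpret a direction $\bar\nu=(\xb_0,\pb_0,\bar\beta)$ in $\ker\cals'(\hat\nu)$ as an initial datum for the linearized system (LS): integrating \eqref{dynamics.substituted} along $\bar\nu$ yields a quadruple $(\xb,\ub,\vb,\pb)$, together with $\bar\beta$, solving the full linearized optimality system \eqref{statedynamics.linearized}--\eqref{lindotHv0}, because the vanishing of $\cals'(\hat\nu)\bar\nu$ in \eqref{shootingfunction.derivative.explicit} precisely encodes the linearized endpoint conditions and the boundary conditions on $H_v$ and $\dot H_v$. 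I would then apply the one-to-one Goh-type change of variables \eqref{mappingLS-LQS} and invoke Lemma \ref{LS-LQS_equivalence} to produce a solution $(\xib,\ub,\yb,\hb,\chib,\chib_h,\beta^{LQ})$ of the linear-quadratic optimality system (LQS).

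Now the coercivity hypothesis \eqref{Omega.P2.coersity.unique} of Theorem \ref{SOSC} ensures that the linear-quadratic problem (LQ) is strictly convex on its feasible set $\P_2$, so its unique minimizer is the trivial one. By Corollary \ref{ColrecoverLCconditions} the Legendre--Clebsch matrix in (LQ) is uniformly positive definite, so (LQS) is a nondegenerate necessary and sufficient optimality system for (LQ); consequently its unique solution is $(\xib,\ub,\yb,\hb,\chib,\chib_h,\beta^{LQ})=0$. Running the transformation \eqref{mappingLS-LQS} backwards (which is well defined because it is one-to-one) yields $(\xb,\ub,\vb,\pb,\beta)=0$, and in particular $\xb_0=\pb_0=0$ and $\bar\beta=0$, i.e.\ $\bar\nu=0$.

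Thus $\cals'(\hat\nu)$ is injective and Lipschitz continuous at $\hat\nu$, and Proposition \ref{convergence_criteria_GN} delivers the local quadratic convergence claim. The main subtlety in my view lies in the transference between (LS) and (LQS): one must justify that every kernel direction of $\cals'(\hat\nu)$ really does generate a \emph{full} solution of (LS)—including the linearized endpoint constraints on $\bar\beta$ and the linearized $H_v$/$\dot H_v$ relations—so that Lemma \ref{LS-LQS_equivalence} applies, and conversely that the trivial solution of (LQS) pulls back across \eqref{mappingLS-LQS} to force the shooting variables to vanish. Once this correspondence is in place, the argument is essentially a repackaging of the sufficient second-order condition as a nondegeneracy statement for the Newton linearization.
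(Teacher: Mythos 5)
Your argument is correct and follows essentially the same route as the paper's proof: identify $\ker\cals'(\hat\nu)$ with the solutions of (LS), push them through the Goh-type map \eqref{mappingLS-LQS} via Lemma \ref{LS-LQS_equivalence} into (LQS), use the coercivity \eqref{Omega.P2.coersity.unique} to conclude that (LQS) has only the trivial solution, pull back by injectivity of the transformation, and invoke Proposition \ref{convergence_criteria_GN}. The only detail worth adding is the paper's explicit first step, namely that \eqref{Omega.P2.coersity.unique} together with Theorem \ref{SOSC} makes $\wh$ a weak minimum, which is the standing hypothesis of Lemma \ref{LS-LQS_equivalence} and of the well-posedness of (OS) via Theorem \ref{OC-OSequivalence}.
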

\begin{proof}
	From Theorem \ref{SOSC}, the trajectory $\wh$ is a weak minimum for problem (OC). From Corollary \ref{ColrecoverLCconditions} and Proposition \ref{coercivity_implies_LC}, \eqref{LC-like.mixedcontrols} holds. Consequently, Theorem \ref{OC-OSequivalence} implies that (OS) is well-posed  so that we can properly formulate the shooting algorithm. Therefore, consider some solution $(\xb, \ub, \vb, \pb, \beta)$ of (LS), and the associated transformed process $(\xib, \ub, \yb, \hb, \chib, \chib_h, \beta^{LQ})$ given by \eqref{mappingLS-LQS}. The latter is a solution of (LQS) in view of Lemma \ref{LS-LQS_equivalence}.
	However, once we assume condition \eqref{Omega.P2.coersity.unique}, the unique solution to (LQS) is the null trajectory and, since the transformation \eqref{mappingLS-LQS} is one-to-one, the solution to (LS) is also null.
	But from equation \eqref{shootingfunction.derivative.explicit}, the vectors $\nub$ in the kernel of $\cals'(\nuh)$ are precisely the solutions of (LS). We conclude that $\cals'(\nuh)$ is injective. In addition $\cals'$ is Lipschitz continuous due to Assumption \ref{datafunctions.lipschitz}. The  claim follows from Proposition \ref{convergence_criteria_GN}.
\end{proof}

\section{Control-constrained Problems}
\label{simpler_control_constraints}
In this section we extend the proposed algorithm to problems where the controls are subject to bounds. We denote by \textnormal{(CP)} the problem obtained by adding the following control constraints to \textnormal{(OC)}:
\begin{equation}
	\label{constraint.control}
	\begin{split}
		u(t) \in U, & \text{ a.e. on $[0,T],$ }\\
		0 \le v_i(t) \le 1,& \text{ a.e. on $[0,T],$ for $i = 1, \cdots, m$},
	\end{split}
\end{equation}
where $U$ is an open subset of $\mathbb{R}^l$. The choice of the bounds $0$ and $1$ was made for clarity of the exposition since it simplifies the notation, however all the results here presented hold for controls satisfying $a_i \le v_i \le b_i$ for any pair of bounds such that $a_i < b_i$.  Consider the following definition. 
\begin{definition}
	The component $\vh_i$ is said to have a {\em singular arc} in an interval $I$, whenever $0 < \vh_i(t) < 1$ a.e. on $I$. 
	If a component $\vh_i$ assumes the value $1$ (resp. $0$) a.e. on an interval $I$, it is said to have an {\em upper bang arc} (resp. {\em lower bang arc}) on this interval.  
	If $\vh_i$ has either an upper or a lower bang arc on $I$ then we can say, shortly, that it has a {\em bang arc} on $I$.\\
\end{definition}
\begin{assumption}
	We assume the following hypotheses on the optimal $(\uh, \vh)$.
\begin{itemize}
	\item[(i)] Each linear control $\vh_i$, with  $i=1,\dots,m$, presents a {\em bang-singular structure}, i.e. $\vh_i$ is a finite concatenation of {\em bang and singular arcs}.
	
	\item[(ii)] The bang-singular structure of $\vh$ induces a partition of the time interval $[0,T]$, that we write as
	\begin{equation*}
	\{0 := \Th_0 < \Th_1 < \Th_2 < \cdots < \Th_{N-1} < \Th_N := T\}.
	\end{equation*}
At each interval $\Ih_k := [\Th_k, \Th_{k+1}]$, every component $\vh_i$ is either bang or singular, and at $\Th_k$ some control $\vh_i$ switches its arc type, and presents a discontinuity of first kind. Hence, defining the sets
	\begin{equation*}
		\begin{split}
			S_k &:= \{ 1\le i \le m: \vh_i \text{ is singular on }\Ih_k \},\\
			A_k &:= \{ 1\le i \le m: \vh_i = 0 \text{ a.e. on }\Ih_k \},\\
			B_k &:= \{ 1\le i \le m: \vh_i = 1 \text{ a.e. on }\Ih_k \},
		\end{split}
	\end{equation*}
	 there must exist some $\rho' > 0$ such that 
	\begin{equation}
	\label{control_discontinuities}
	\rho' < \vh_i(t) < 1-\rho', \quad \text{ for all }i \in S_k,
	\text{ a.e. on $t \in \Ih_k$.}
	\end{equation}
	In addition, we assume that the nonlinear control satisfies
	\begin{equation}
		\label{control_set.rho-Ball}
		\uh([0,T]) + \rho'\mathbb{B} \subset U.
	\end{equation}
	
	\item[(iii)] \label{LC-constrained}	
	For each $k = 1, \dots, N$,  let $v_{S_k}$ denote the vector with components $v_i$ with $i \in S_k$. To obtain a feedback representation in a similar manner as done in Section \ref{differential_algebraic}, we assume that 
	\begin{equation}
	\label{controls_continuity}
	\begin{array}{c}
		\text{$\uh$ is continuously differentiable in $[0,T]$,}\\
		\text{$\vh_{S_k}$ is continuous in  $\Ih_k$, for $k=1,\dots,N$,}
	\end{array}
	\end{equation}
	and that, on each interval $\Ih_k$, the following form of the generalized strengthened Legendre-Clebsch conditions holds
	\begin{equation*}
		H_{uu}(\wh, \ph) \succ 0, \quad -\frac{\partial \ddot{H}_{v_{S_k}}}{\partial v_{S_k}}(\wh,\ph) \succ 0.
	\end{equation*}
\end{itemize}
\end{assumption}
Assumptions (i) and (ii) can be justified, in the context of control-affine problems, by the theory of {\em junction conditions}. More precisely, in \cite{mcdanell1971necessary}, McDanell and Powers established that when a singular arc has odd order $q$, the singular controls are obtained from the $2q$-{\em th} time derivative of the switching function, and the junction between singular and bang arcs is either $\mathcal{C}^1$ or discontinuous. Many examples found in the literature fall in the latter category (see {\em e.g.} \cite{Mau76,LEDSCH2014}), as well as the two examples treated in the Section \ref{Section_examples} of this work. For further details and examples concerning junction conditions, we refer to \cite{bell1993optimality}. The regularity of the controls assumed in (iii) comes as a consequence of the feedback representation derived in Section \ref{differential_algebraic}.

As a consequence of the minimization of the Hamiltonian given by the PMP, if a component $v_i$ is singular in some interval $I$, then $H_{v_i}(t) = 0$ a.e. on $I$ additionally to $H_u(t)=0.$ Hence, as done in Section \ref{differential_algebraic}, we can use the system
\begin{equation}
\label{statcons}
\left(\begin{array}{c}
H_u\\
-\ddot{H}_{v_{S_k}}
\end{array}\right) = 0,\quad \text{a.e. on } \Ih_k.
\end{equation}
along with item (iii) from Assumption \ref{LC-constrained} to write the controls $\uh$ and $\vh_{S_k}$ in feedback form, which we represent as
\begin{equation}
	\label{control_feedback_mappings}
	\uh = U(\xh,\ph), \quad \vh_{S_k} = V_{S_k}(\xh, \ph), \text{ for $k = 1, \dots, N.$}
\end{equation}

\subsection{The transformed problem}
Given a feasible control $(\uh,\vh)$, we call {\em control structure} the configuration of bang and singular arcs of $\vh$. In (CP), there may be feasible trajectories with a bang-singular structure different from the one of $(\uh,\vh).$ However, if $(\uh,\vh)$ is a local solution for (CP), it will also be a local solution for a problem with a fixed control structure. We assume {\em a priori} knowledge of the optimal control structure to formulate a new unconstrained problem whose feasible controls correspond to controls of the original problem that have such fixed structure. This is achieved by a reparametrization from $[0,T]$ to the interval $[0,1]$ as described next.

In this new unconstrained problem, for each switching time we associate a state variable $T_k$ having null dynamics, keeping the convention that $T_0 = 0$ and $T_N = T$. Such variables are initialized in the algorithm as a rough estimate of the optimal switching times, that will be iteratively tunned by the shooting scheme. For each interval $I_k := [T_k, T_{k+1}]$, we also associate a state variable $x^k$, that is the reparametrization of the state restricted to $I_k$ to the interval $[0,1]$.

The control variables of the new problem are defined as follows. For each interval $I_k$ of the partition we define a control variable $u^k\colon I_k\to \cR^l$ that appears nonlinearly  and an affine control $v^k\colon I_k \to \cR^{|S_k|}$. This way, each $v^k$ has as many entries as the number of singular components of $\vh$ in $\Ih_k$. The bang components of $v$ appear as constants and not as control variables, {\em i.e.} are fixed to either $0$ or to $1$. 

The trajectories of the transformed problem have the form
{\small
\begin{equation}
\label{Wh}
W := \left(\left(x^k\right)_{k=1}^N, \left(u^k\right)_{k=1}^N, \left(v^k\right)_{k=1}^N, \left(T_k\right)_{k=0}^{N}\right),
\end{equation}}
and the transformed problem, denoted as (TP), is the following:
{ \small 
\begin{align*}
\min 
&\,\phi(x^1(0), x^N(1)) \\
\text{s.t. } &  \dot{x}^k = (T_k - T_{k-1})\left(\sum_{i\in B_k\cup \{0\}}f_i(x^k,u^k) + \sum_{i\in S_k} v^k_if_i(x^k,u^k)\right), \ \ k = 1,\cdots,N ,\\
&\dot{T}_k = 0, \ \ k = 1,\cdots,N-1 ,\\
&\eta(x^1(0), x^N(T)) = 0, \ \ \\
&x^k(1) = x^{k+1}(0), \ \ k = 1,\cdots,N-1.
\end{align*}
}
Note that given some admissible trajectory $(x,u,v)$ of (CP), and its associated switching times $\left(T_k\right)$, we can obtain a feasible trajectory for (TP) via the following transformation
\begin{equation}\label{transform.CP-TP}
\begin{split}
x^k(s) &:= x\left(T_{k-1} + (T_k - T_{k-1})s\right),\\
u^k(s) &:= u\left(T_{k-1} + (T_k - T_{k-1})s\right),\\
v^k(s) &:= v\left(T_{k-1} + (T_k - T_{k-1})s\right).
\end{split}
\qquad \text{ for $s \in [0,1]$}.
\end{equation}
In fact, we discuss below that we can derive the weak optimality of a solution for (TP) from the optimality, in an appropriate sense, of a solution for (CP). To do this, consider the definition of {\em Pontryagin minimum} \cite{MR1641590} given next.
\begin{definition}
	\label{pontryagin_minimum}
	A feasible trajectory $\wh \in \W$ is a {\em Pontryagin minimum} of \textnormal{(CP)} if, for any positive $N$, there exists some $\varepsilon_N > 0$ such that $\wh$ is a minimum in the set of feasible trajectories $w = (x,u,v) \in \W$ satisfying
	\begin{equation*}
	\norm{x - \xh}_{\infty} < \varepsilon_N, \ \norm{(u,v) - (\uh,\vh)}_{1} < \varepsilon_N, \ \norm{(u,v) - (\uh,\vh)}_{\infty} < N.
	\end{equation*}
\end{definition}
\begin{lemma}
	\label{CP-TP-relation}
	If $\wh$ is a Pontryagin minimum of \textnormal{(CP)}, then $\Wh$ obtained from $\wh$ using transformation \eqref{transform.CP-TP} is a weak minimum of \textnormal{(TP)}.
\end{lemma}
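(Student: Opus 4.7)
The plan is to reverse-engineer a competitor trajectory of (CP) from any nearby competitor of (TP) and then invoke the Pontryagin minimality of $\wh$. Concretely, fix a feasible trajectory $W = \left((x^k), (u^k), (v^k), (T_k)\right)$ of (TP) with $\|W - \Wh\|_\infty < \varepsilon$. Define $w = (x,u,v)$ on $[0,T]$ by inverting \eqref{transform.CP-TP}: on each interval $I_k = [T_{k-1}, T_k]$, set $x(t) := x^k(s), u(t) := u^k(s), v(t):= v^k(s)$ with $s = (t - T_{k-1})/(T_k - T_{k-1})$, and assign to the bang components of $v$ on $I_k$ their constant values $0$ or $1$ dictated by the sets $A_k,B_k$ used to build (TP). The junction conditions $x^k(1) = x^{k+1}(0)$ together with the feasibility of $W$ ensure that $w \in \W$ and satisfies the dynamics and endpoint constraints of (CP), so $w$ is admissible. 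Moreover $\phi(x(0), x(T)) = \phi(x^1(0), x^N(1))$ and similarly for $\wh$ and $\Wh$, so the desired inequality in (TP) is equivalent to $\phi$ evaluated at $w$ being no smaller than at $\wh$.

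The core work is then a distance estimate: I need to show that if $\varepsilon$ is small enough, $w$ lies in the Pontryagin-neighborhood of $\wh$ from Definition \ref{pontryagin_minimum}. First, since $|T_k - \Th_k| < \varepsilon$ and $x^k, \xh^k$ are uniformly Lipschitz (from Assumption \ref{datafunctions.lipschitz} plus boundedness of the right-hand sides), a direct computation gives $\|x - \xh\|_\infty \le C\varepsilon$. Second, $u$ and $v$ remain bounded in $L^\infty$ uniformly in $\varepsilon$ (hence $\|(u,v) - (\uh,\vh)\|_\infty$ is controlled by a constant $N$). Third, on the subintervals of $[0,T]$ that lie simultaneously in the $k$-th structural interval for both $w$ and $\wh$ — that is, on $[\max(T_{k-1},\Th_{k-1}), \min(T_k, \Th_k)]$ — the reparametrizations $u^k, \uh^k$ and $v^k, \vh^k$ agree at parameters that differ by $O(\varepsilon)$ with uniformly bounded derivatives (using \eqref{controls_continuity} and the feedback representations \eqref{control_feedback_mappings}), yielding $L^\infty$ closeness there. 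The bang components agree identically on these common subintervals. On the symmetric differences near each $\Th_k$, of total measure $O(\varepsilon)$, the controls $u,v$ and $\uh,\vh$ may differ by $O(1)$, but this contributes only $O(\varepsilon)$ to $\|(u,v) - (\uh,\vh)\|_1$.

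Combining these estimates, for $\varepsilon$ sufficiently small we obtain $\|x - \xh\|_\infty < \varepsilon_N$, $\|(u,v) - (\uh,\vh)\|_1 < \varepsilon_N$, and $\|(u,v) - (\uh,\vh)\|_\infty < N$, exactly the conditions in Definition \ref{pontryagin_minimum}. Since $\wh$ is a Pontryagin minimum of (CP), $\phi(x(0), x(T)) \ge \phi(\xh(0), \xh(T))$, which, translated back through the correspondence $w \leftrightarrow W$, $\wh \leftrightarrow \Wh$, gives the weak minimality of $\Wh$ for (TP).

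The main obstacle is the control-structure mismatch near the switching times: when $T_k \ne \Th_k$, the reconstructed $v$ takes its singular value on an interval where $\vh$ is bang (or vice versa), so we cannot hope for $L^\infty$ closeness of the controls on all of $[0,T]$. This is precisely why weak minimality of $\Wh$ in the reparametrized problem is available only from Pontryagin — not weak — minimality of $\wh$ in (CP), and it is the reason the $L^1$ norm appears in Definition \ref{pontryagin_minimum}. Once this point is identified, the remaining estimates are routine consequences of Lipschitz continuity of the data and of the assumed regularity of $\uh$ and $\vh_{S_k}$ on each $\Ih_k$.
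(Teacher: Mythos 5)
Your proposal is correct and follows essentially the same route as the paper: invert the reparametrization \eqref{transform.CP-TP} to turn any (TP)-competitor near $\Wh$ into a (CP)-competitor, check it lands in the Pontryagin neighborhood of $\wh$ (with the structural mismatch near the switching times controlled only in $L^1$), and invoke the Pontryagin minimality. The one step you gloss over is the verification that the reconstructed controls actually satisfy the constraints \eqref{constraint.control} — the paper does this explicitly by taking $\bar\varepsilon$ below the margin $\rho'$ from \eqref{control_discontinuities}--\eqref{control_set.rho-Ball} — but this is routine and already implicit in your estimates.
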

The proof of this lemma follows as a direct extension of a similar result for the totally control-affine case given in \cite{aronna2013shooting}. For the sake of completeness, we included the proof in Appendix \ref{appendix_Goh.computations}.
\if{
\begin{proof}
Since $\wh$ is a Pontryagin minimum of (CP), from Definition \ref{pontryagin_minimum}, there exists $\varepsilon > 0$ such that
\begin{equation}
\label{ineqw}
	\norm{x - \xh}_{\infty} < \varepsilon, \ \norm{(u,v) - (\uh,\vh)}_{1} < \varepsilon, \ \norm{(u,v) - (\uh,\vh)}_{\infty} < 1.
\end{equation}
Let $\Wh$ be the transformation of $\wh$ through \eqref{transform.CP-TP}. We now prove that $\Wh$ is weakly optimal for (TP). Hence we search appropriate $\bar\delta, \bar\varepsilon$ for which all feasible trajectories $W = \big((x^k), (u^k), (v^k), (T_k)\big)$ of (TP) that satisfy
\begin{equation}
\label{weak_optimality_W}
\left| T_k - \Th_k\right|<\bar\delta, \ \norm{(u^k,v^k) - (\uh^k,\vh^k)}_{\infty} < \bar\varepsilon, \ \text{ for all } k = 1, \cdots, N 
\end{equation}
will be mapped into neighborhood of $\wh$ where it is optimal. Such mapping of $W \mapsto w$ is done as follows
\begin{gather}
	\label{transform.TP-CP.xu}
	x(t) := x^k\left(\frac{t - T_{k-1}}{T_k - T_{k-1}}\right), \quad u(t) := u^k\left(\frac{t - T_{k-1}}{T_k - T_{k-1}}\right),\quad \text{ for $t \in I_k$},\\
	\label{transform.TP-CP.v}
	v_i(t) :=
	\left\{
	\begin{array}{cc}
		0,& \text{if $t \in I_k$ and $i\in A_k,$}\\ 
		v_i^k\left(\frac{t - T_{k-1}}{T_k - T_{k-1}}\right),& \text{if $t \in I_k$ and $i\in S_k,$}\\
		1,& \text{if $t \in I_k$ and $i\in B_k.$}\\
	\end{array}
	\right.
\end{gather}

The dynamics \eqref{state.dynamics} are clearly satisfied by $(x,u,v)$ obtained from \eqref{transform.TP-CP.xu}-\eqref{transform.TP-CP.v}. The end-point constraints in \eqref{initial-final.constraints} are also easy to verify since $x(0) = x^1(0)$ and $x(T) = x^N(1)$ along with the feasibility of $W$.

The last step to check feasibility of $w$ are the control constraints. For the nonlinear controls, note that since $\norm{u^k - \uh^k}_{\infty} < \bar\varepsilon$, we have that $\norm{u - \uh}_{\infty} < \bar\varepsilon$. Recalling $\rho$ given in \eqref{control_discontinuities}-\eqref{control_set.rho-Ball}, if we choose $\bar\varepsilon < \rho$, then $u\left([0,T]\right) \subset U$.
To discuss the feasibility of the linear controls, from equation \eqref{control_discontinuities}, we can choose $\bar\varepsilon$ so that, whenever $t \in I_k$ and $i \in S_k,$
\begin{equation}
0 < \rho - \bar\varepsilon \le v_i(t) \le 1 -\rho + \bar\varepsilon < 1.
\end{equation}
On the other hand, for $i \in A_k \cup B_k$, we know that $v_i(t) \in \{0,1\}$ in view of \eqref{transform.TP-CP.v}, so that the control constraints are still satisfied. This concludes the proof of the feasibility of $(x,u,v)$.

In the sequel, we find $\bar\delta$ and $\bar\eps$ so that, if $W$ satisfies \eqref{weak_optimality_W}, then the transformed $w$ verifies \eqref{ineqw} for the given $\eps.$ The analysis is analogous for both controls $u$ and $v$, hence we will conduct the calculations only for $u$. We have
{\small
\begin{equation}
	\begin{array}{cc}
		\disp \int_{I_k\cap \Ih_k} |u_i(t) - \uh_i(t)|\dd t & \disp \le
		\int_{I_k\cap \Ih_k} \left| u_i^k\left(\frac{t - T_{k-1}}{T_k - T_{k-1}}\right) - \uh_i^k\left(\frac{t - T_{k-1}}{T_k - T_{k-1}}\right)\right|\dd t \\
		
		& \disp + \int_{I_k\cap \Ih_k} \left| \uh_i^k\left(\frac{t - T_{k-1}}{T_k - T_{k-1}}\right) - \uh_i^k\left(\frac{t - \Th_{k-1}}{\Th_k - \Th_{k-1}}\right)\right|\dd t.
	\end{array}
\end{equation}}
The first integral in the r.h.s. of latter display is bounded by $\bar\varepsilon|I_k\cap \Ih_k|$ in view of \eqref{weak_optimality_W}. For the second term, recall that $\uh$ is continuous on $[0,T]$ and so are the components of $\uh^k$ over $\Ih_k$, so that they are uniformly continuous over these intervals. Therefore, for each $k = 1, \cdots, N,$ we can find some $\bar\delta_k>0$ such that, if $|T_k - \Th_k| < \bar\delta_k,$ then
\begin{equation*}
\left| \uh_i^k\left(\frac{t - T_{k-1}}{T_k - T_{k-1}}\right) - \uh_i^k\left(\frac{t - \Th_{k-1}}{\Th_k - \Th_{k-1}}\right) \right| < \bar\varepsilon
\end{equation*}
for every component of $\uh^k$. Hence we only need to choose $\disp \bar\delta:=\min_{k = 1, \cdots, N} \bar\delta_k$. We proved that
\be
\label{estimate_u_1}
\disp \int_{I_k\cap \Ih_k} |u_i(t) - \uh_i(t)|\dd t  \leq 2\bar\varepsilon|I_k\cap \Ih_k|.
\ee
Next, we need to estimate the integral outside the intersection $I_k\cap \Ih_k.$ We assume w.l.o.g. that $T_k < \Th_k$ hence, in view of \eqref{weak_optimality_W},
\begin{equation}
\label{estimate_u_2}
\int_{T_k}^{\Th_k}|u_i(t) - \uh_i(t)|\dd t \le \bar\delta\bar\varepsilon.
\end{equation}
Adding up all the terms, we get from \eqref{estimate_u_1}-\eqref{estimate_u_2}, that
$$\norm{u_i - \uh_i}_1 < \bar\varepsilon(2T + (N-1)\bar\delta).
$$ 
An analogous estimate can be obtained for $\|v-\vh\|_1.$ Finally, taking into account all the control components  $m$ of the linear controls and $l$ from the nonlinear controls, we get that, if
\begin{equation*}
\bar\varepsilon(2T + (N-1)\bar\delta) < \frac{\varepsilon}{m + l},
\end{equation*}
then $\norm{u - \uh}_1 < \varepsilon$, as desired.
\end{proof}

}\fi

\subsection{The shooting algorithm for the transformed problem} In order to have an algorithm suitable to solve control constrained problems, our final step is to define a proper shooting function and apply the procedure described in Section \ref{shooting_algorithm}.

We start by stating the PMP for this unconstrained problem (TP).
Define the endpoint Lagrangian
\begin{equation}
	\tilde{\ell}:= \phi(x^1(0), x^N(1)) + \sum_{j = 1}^{d_{\eta}}\beta_j\eta_j(x^1(0), x^N(T)) + \sum_{k = 1}^{N-1}\theta^k\left(x^k(1)-x^{k+1}(0)\right).
\end{equation}
Note that each multiplier $\beta_j$ is associated with the endpoint constraints that come from the original problem and each $\theta^k$ is associated with the additional constraints of continuity of the state from (TP). The pre-Hamiltonian of (TP) is given by
\begin{equation*}
\begin{split}
	\tilde H &:= \sum_{k = 1}^{N}(T_k - T_{k-1})H^k, 
\end{split}
\end{equation*}
where {\small $H^k := \disp p^k\cdot \left(\sum_{i\in B_k\cup \{0\}}f_i(x^k,u^k) + \sum_{i\in S_k} v^k_if_i(x^k,u^k)\right).$}
Hence, from the PMP, the costates follow the dynamics
\begin{equation}
	\dot{p}^k = - (T_k - T_{k-1})D_{x^k}H^k,
\end{equation}
with transversality conditions
\begin{gather}
	p^1(0) = -D_{x^1_0}\phi - \sum_{j -1}^{d_{\eta}}\beta_jD_{x^1_0}\eta_j(x^1(0), x^N(T)),\\
	\label{costates_TP_continuity}
	\begin{array}{ll}
		p^k(1) = \theta^k, & \text{ for $k = 1, \cdots, N-1,$}\\
		p^k(0) = \theta^{k-1}, & \text{ for $k = 2, \cdots, N,$}
	\end{array}\\
	p^N(1) = D_{x_1^N}\phi + \sum_{j -1}^{d_{\eta}}\beta_jD_{x_1^N}\eta_j(x^1(0), x^N(T)).
\end{gather}
Note that equation \eqref{costates_TP_continuity} can be replaced by
\begin{equation}
	\label{costates_TP_continuity2}
	p^k(1) = p^{k+1}(0), \quad\text{for $k = 1, \cdots, N-1$},
\end{equation}
hence, eliminating the multipliers $\theta^k$.
We must also address the costates $p^{T_k}$ associated with the switching times, which satisfy
{\small 
\begin{equation}
\label{costatesTk}
	\begin{array}{cccc}
		\dot{p}^{T_k} = -H^k + H^{k+1}, & p^{T_k}(0) = 0,& p^{T_k}(1) = 0, & \text{ for $k = 1, \cdots, N-1$}. 
	\end{array}
\end{equation}}
Combining all conditions from \eqref{costatesTk}, we obtain
\begin{equation}
	\label{condition.pTk}
	\int_{0}^{1}\left(H^{k+1} -H^k \right)\dd t = p^{T_k}(0) - p^{T_k}(1) = 0.
\end{equation}
Since the dynamics are autonomous, the Hamiltonian is constant for the optimal trajectory and we equivalently express the conditions \eqref{condition.pTk} for $p^{T_k}$ as 
\begin{equation}
	H^k = H^{k+1}, \quad \text{ for $k = 1, \cdots, N-1.$}
\end{equation}
Now we are in position to adapt the shooting scheme for solving (TP). Following the steps from Section \ref{shooting_algorithm} we start by finding the feedback form for the controls. It suffices to use the representation given in equation \eqref{control_feedback_mappings} 
\begin{equation}
	u^k = U\left(x^k, p^k\right), \quad v^k = V_{S_k}\left(x^k, p^k\right),\quad \text{ for $k = 0, \cdots, N.$}
\end{equation}
By Lemma \ref{CP-TP-relation} such controls must also be feasible for (TP) and when the feedback arguments $\xh^k$ and $\ph^k$ correspond to the nominal trajectory, we obtain optimal controls.

We must also define an appropriate shooting function that will express the stationarity of the Hamiltonian, the initial-final constraints and transversality conditions. Stationarity with respect to the nonlinear controls is equivalent to the feedback representation for $u$ given in equation \eqref{control_feedback_mappings}. For the linear controls, the feedback form is equivalent to $\ddot{H}_{v_{S_k}} = 0$. Hence we must also impose $H^k_{v^k_i}(0) = 0$ and $\dot{H}^k_{v^k_i}(0) = 0$ to ensure the stationarity $H_{v_k} = 0$.

Note that we can choose to include the constraints related the continuity of the states and costates or integrate each $x^k$ and $p^k$ using the final values of $x^{k-1}$ and $p^{k-1}$ as initial conditions. The clear advantage of the latter strategy is the smaller number of shooting variables, i.e. the initial conditions for states and costates at the switching times can be omitted. On the other hand, explicitly including these constraints makes the algorithm more stable numerically and favors the parallelization of computational implementations, see \cite{stoer2013introduction}.

The following is the shooting function associated to (TP) with the full set of shooting variables
\begin{equation}
\label{shooting.function.TP}
	\begin{array}{c}
		\mathcal{S} \colon \cR^{Nn}\times\cR^{Nn,*}\times\mathbb{R}^{N-1}\times\mathbb{R}^{d_{\eta}} \to  \cR^{(N-1)n + d_{\eta}}\times \cR^{(N+1)n + N-1 +2\sum |S_k|,*}\\
		\nu \mapsto \mathcal{S}(\nu) :=
		\left(
		\begin{array}{c}
			\eta(x^1(0), x^N(1)) \\
			\left( x^k(1) - x^{k+1}(0)\right)_{k = 1,\cdots,N-1}\\
			\left( p^k(1) - p^{k+1}(0)\right)_{k = 1,\cdots,N-1}\\
			p^1(0) + D_{x_0^1}\tilde \ell\left(x^1(0), x^N(1)\right)\\
			p^N(1) - D_{x_1^N}\tilde \ell\left(x^1(0), x^N(1)\right)\\
			\left( H^k(1) - H^{k+1}(0)\right)_{k = 1,\cdots,N-1}\\
			\left( p^i\cdot f_i\left(x^i, U^i\right)(0) \right)_{\tiny
			\begin{array}{l}
				\tiny i \in S_k\\
				\tiny k = 1,\dots, N
			\end{array}
			}\\
			\disp
			\left(p^i\cdot [f_0, f_i]^x(0)\right)_{\tiny
				\begin{array}{l}
				\tiny i \in S_k\\
				\tiny k = 1,\dots, N
				\end{array}
			}
		\end{array}
		\right)
	\end{array}
\end{equation}
where we define the vector of shooting arguments as
{\small 
\begin{equation}
	\nu := \left(\left(x^k(0)\right)_{k=1}^{N}, \left(p^k(0)\right)_{k=1}^{N}, \left(T_k\right)_{k=1}^{N-1}, \beta\right).
\end{equation}}
We recall equation \eqref{switching.function.D1} that gives a concise analytical form for $\dot{H}_{v_i}$ and was used in the formulation of the last component of the above shooting function. 

Since the new problem (TP) falls in the same category of unconstrained problem (OC), we join Lemma \ref{CP-TP-relation} and Theorem \ref{convergence.unconstrained-problem} in the following result. 

\begin{theorem}
	\label{convergence.constrained_problem}
	If $\wh$ is a Pontryagin minimum of \textnormal{(CP)} such that $\Wh$ given in \eqref{Wh} satisfies the coercivity condition \eqref{Omega.P2.coersity.unique} for problem \textnormal{(TP)}, then the shooting algorithm for \textnormal{(TP)} is locally quadratically convergent.
\end{theorem}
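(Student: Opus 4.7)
The plan is to reduce the constrained problem to the framework already settled for the unconstrained case, exploiting the two tools the authors have placed immediately before this theorem: Lemma \ref{CP-TP-relation}, which turns a Pontryagin minimum of (CP) into a weak minimum of (TP), and Theorem \ref{convergence.unconstrained-problem}, which guarantees local quadratic convergence of the shooting algorithm for any problem in the partially control-affine class (OC) at a weak minimum where the coercivity condition \eqref{Omega.P2.coersity.unique} holds. So the skeleton is: (i) apply Lemma \ref{CP-TP-relation} to promote $\wh$ to a weak minimum $\Wh$ of (TP); (ii) verify that (TP) fits the abstract template (OC); (iii) invoke Theorem \ref{convergence.unconstrained-problem} using the coercivity hypothesis we are given by assumption.

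The first step I would carry out is to check that (TP) genuinely falls within the class (OC) for which the convergence theorem was proved. Its dynamics on each interval $[0,1]$ read
\begin{equation*}
\dot{x}^k = (T_k - T_{k-1})\Bigl(\sum_{i\in B_k\cup\{0\}}f_i(x^k,u^k) + \sum_{i\in S_k}v_i^k f_i(x^k,u^k)\Bigr),\quad \dot{T}_k=0,
\end{equation*}
which is partially control-affine in the enlarged state $(x^k,T_k)$ with nonlinear control $u^k$ and affine control $v^k_{S_k}$; the regularity Assumption \ref{datafunctions.lipschitz} is inherited from the data of (CP); the continuity constraints $x^k(1)=x^{k+1}(0)$ are smooth equality endpoint constraints of the form required by (OC); and the qualification Assumption \ref{qualification.constraints} for (TP) follows from the qualification of $\eta$ in (CP) together with the trivial surjectivity of the matching constraints (which have full rank w.r.t.\ the initial values $x^{k+1}(0)$). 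This ensures in particular uniqueness of the multiplier for (TP).

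Next, I would confirm that the nominal $\Wh$ is a weak minimum of (TP) enjoying the coercivity hypothesis. Weak optimality is exactly the content of Lemma \ref{CP-TP-relation}, and coercivity of $\Omega_{\P_2}$ for (TP) at $\Wh$ is granted by assumption. Hence all the hypotheses of Theorem \ref{convergence.unconstrained-problem} are met with (OC) replaced by (TP): by Corollary \ref{ColrecoverLCconditions} and Proposition \ref{coercivity_implies_LC} the strengthened Legendre--Clebsch conditions \eqref{LC-like.mixedcontrols} hold on each arc, so the feedback representations \eqref{control_feedback_mappings} obtained via the IFT make the shooting function \eqref{shooting.function.TP} well-defined and $\mathcal{C}^1$. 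Applying Theorem \ref{convergence.unconstrained-problem} verbatim to (TP) yields injectivity of $\mathcal{S}'(\hat\nu)$ and Lipschitz continuity of $\mathcal{S}'$, from which Proposition \ref{convergence_criteria_GN} gives local quadratic convergence of the Gauss--Newton iterations.

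The main obstacle I would expect is the bookkeeping in step (ii), rather than any genuinely new analytical difficulty: one must verify that the extra shooting equations specific to (TP), namely the continuity conditions $x^k(1)-x^{k+1}(0)=0$, $p^k(1)-p^{k+1}(0)=0$, the Hamiltonian-matching $H^k(1)=H^{k+1}(0)$, and the switching conditions $p^i\!\cdot\! f_i(0)=0$ and $p^i\!\cdot\![f_0,f_i](0)=0$, all arise as the first-order necessary conditions for (TP) in the form already treated in Theorem \ref{OC-OSequivalence}; equivalently, that the linearized optimality system for (TP) is still mapped, via the Goh-type transformation \eqref{mappingLS-LQS} applied arc by arc together with the identity transformation on the switching-time variables, onto the linear-quadratic system associated with the coercive quadratic form $\Omega_{\P_2}$ of (TP). Once that correspondence is in place, the argument of Theorem \ref{convergence.unconstrained-problem} forces the kernel of $\mathcal{S}'(\hat\nu)$ to be trivial and the conclusion follows.
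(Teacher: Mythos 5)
Your proposal is correct and follows essentially the same route as the paper, which simply observes that (TP) is an instance of the unconstrained class (OC), uses Lemma \ref{CP-TP-relation} to obtain weak optimality of $\Wh$, and then invokes Theorem \ref{convergence.unconstrained-problem} under the assumed coercivity. The additional bookkeeping you flag in step (ii) is left implicit in the paper, so your write-up is if anything more complete than the original.
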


\section{Examples}
\label{Section_examples}
\subsection{Degenerate Linear Quadratic Problem}
\label{degen_lin_quad}
In this section we discuss the application of the shooting algorithm to a toy problem. We consider the following partially-affine problem, inspired by the examples in \cite{dmitruk2010analysis,aronna2013convergence}.
\begin{equation}
\label{degen_lin_quad_prob}
\begin{array}{cl}
	\text{minimize}   & \disp -2x_2(2) + \int_{0}^{2}\left( x_1^2 + x_2^2 + u^2 + 10x_2v\right)\dd t \\
	\text{subject to} & \dot{x}_1 = x_2 + u,\\
					  &\dot{x}_2 = v, \\
					  & 0\le v(t) \le 0.5, \quad \text{a.e. on $[0,T]$},\\
					  & x_1(T) = 1,\\
					  & x_1(0) = x_2(0) = 0.
\end{array}
\end{equation}
We start by obtaining an estimate for the optimal control structure. This was done by using the BOCOP package \cite{bonnans2012bocop}, where we found that the optimal solution presents a {\em bang-singular-bang} structure.

Figure \ref{part_affine_all} shows a comparison of the solutions of our shooting algorithm and the one obtained from BOCOP. The latter already shows a good approximation of the singular control, however it has poorer performance around the switching times. Another interesting numerical phenomenon usually observed in direct methods, is the fact that the control variables have a tendency to have a slower convergence than the state and costate variables, see \cite{hager2000runge}. This can be perceived in the comparison graph of the nonlinear controls (see also Figure \ref{part_affine_all}). Since the shooting algorithm uses the analytical expression of the optimal controls in feedback form, we can expect more accurate results. 
\if{
\begin{figure}[h!] 
	\indent{\centering\includegraphics[width=1.2\linewidth, center]{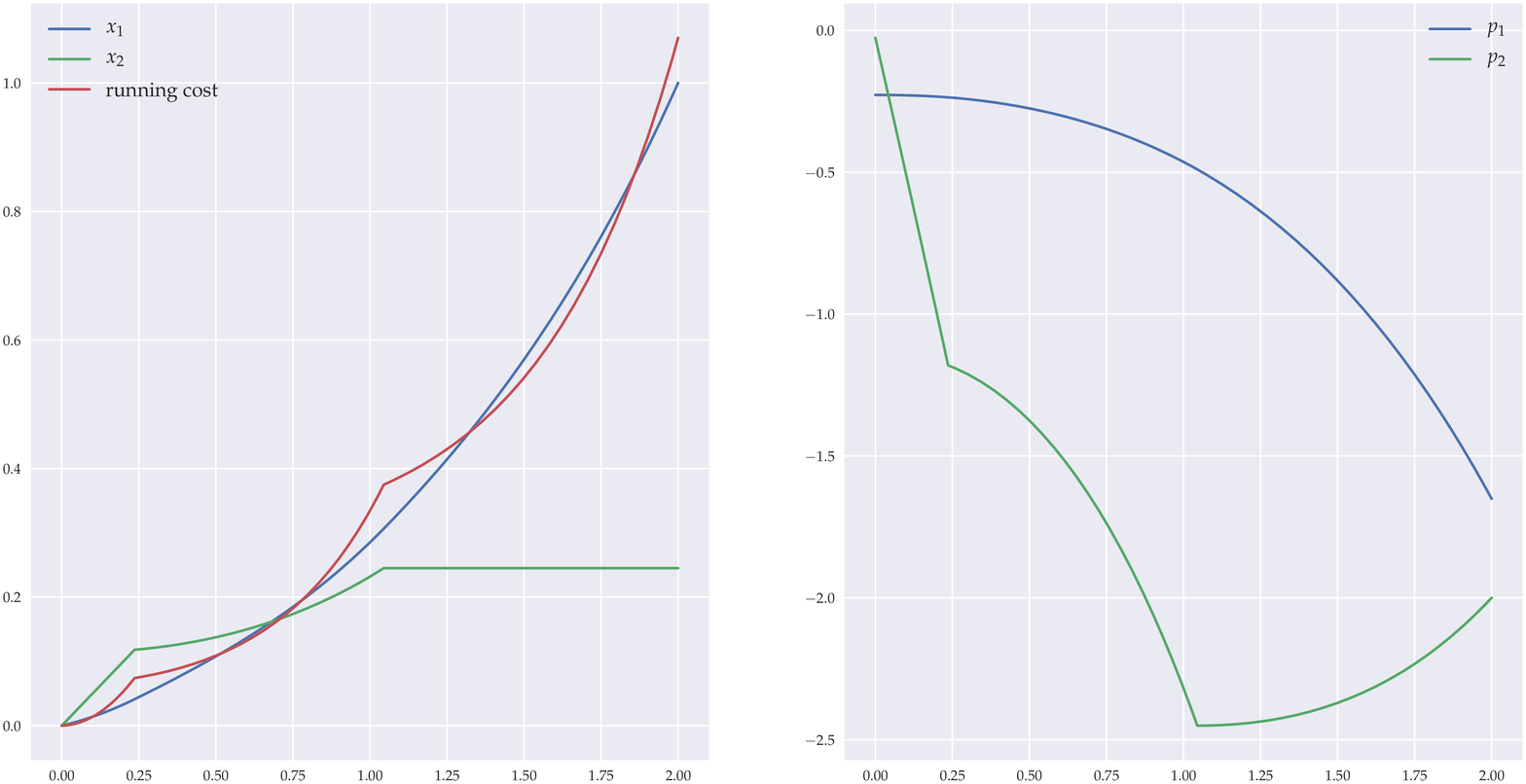}}\\
	\indent 
	\caption[]{}
	\label{part_affine_states}
\end{figure} 
\begin{figure}[h!] 
	\indent{\centering\includegraphics[width=1.4\linewidth, center]{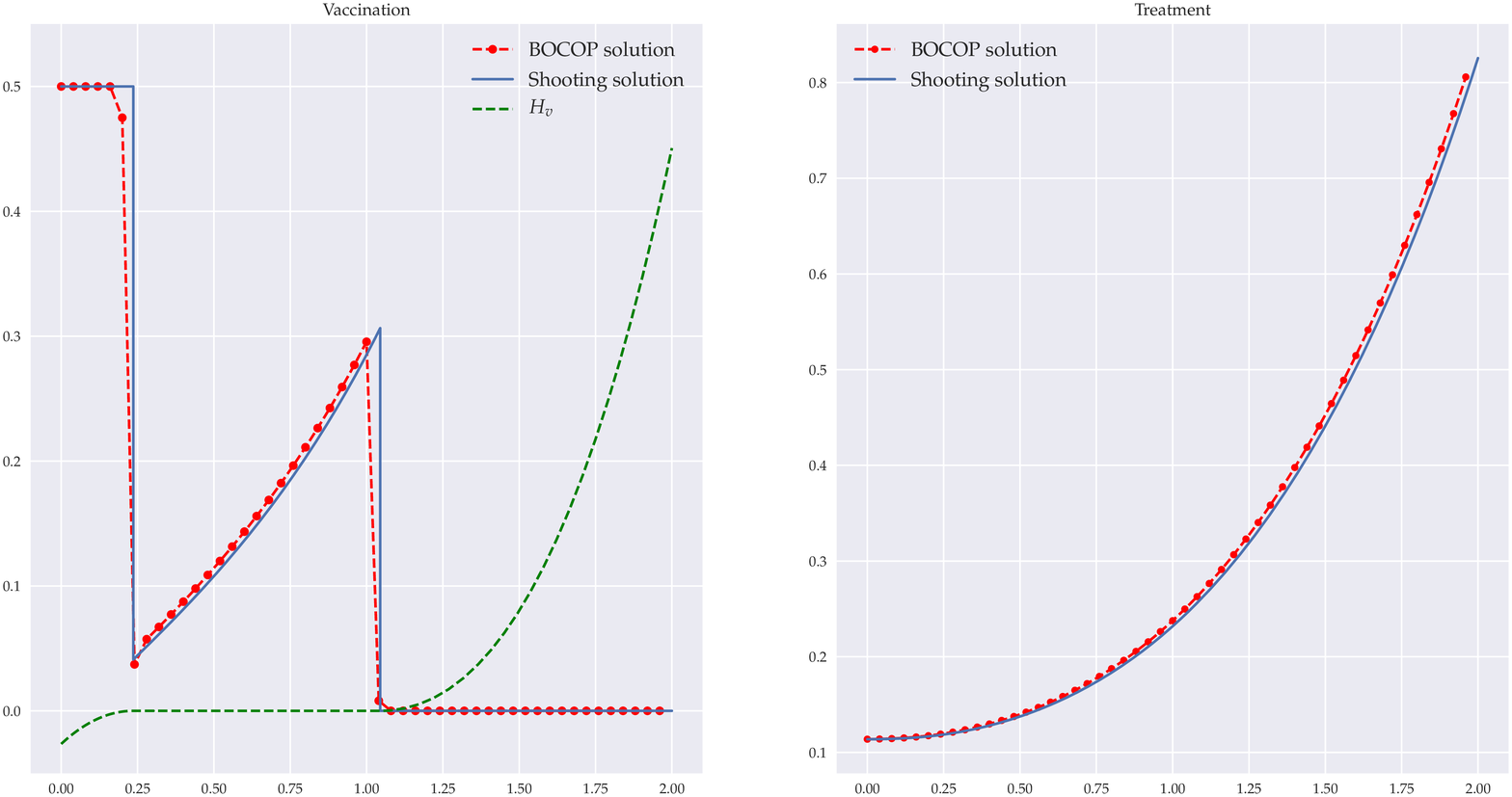}}\\
	\indent 
	\caption[]{}
	\label{part_affine_controls}
\end{figure} 
}\fi

\begin{figure}[h] 
	\indent{\centering\includegraphics[width=1.1\linewidth, center]{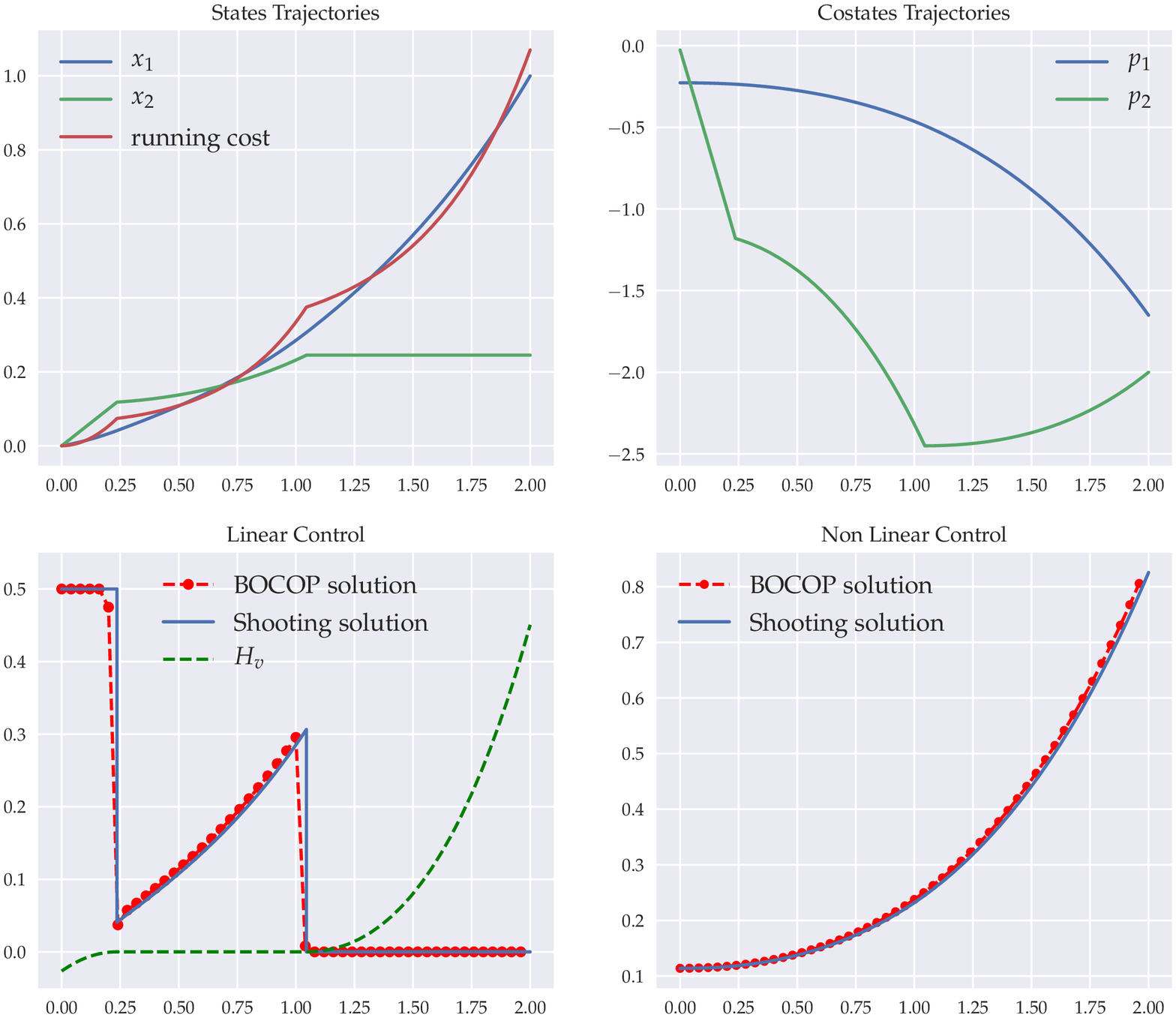}}\\
	\indent 
	\caption[]{Optimal trajectories and controls for problem \eqref{degen_lin_quad_prob}}
	\label{part_affine_all}
\end{figure}

\subsection{Optimal Control of an SIRS Epidemiological Model}
\label{SIRS_OC}
In this section we follow \cite{gaff2009optimal, ledzewicz2011optimal} where the authors discuss problems regarding the optimal control of various SIR (susceptible-infected-recovered) models used to describe the spread of an epidemic in some demographic population. The control is performed either through a term $v$ representing vaccination of susceptible individuals $S$, leading them to the recovered, and temporarily immune, class $R$; or through the treatment of infected individuals that is represented by a second control variable $u$, taking individuals from the infected compartment $I$ to $R$. In this article, we consider the variation known as SIRS model, which takes into account the effect of temporary immunity of recovered individuals $R$, gradually reintroducing them into the susceptible class $S$. This brief discussion is encapsulated in the system below, the description of the involved parameters being given in Table \ref{Parameter table}:
\begin{equation}
\label{NSI}
\begin{array}{rl}
\dot{N} &= F(N) - \delta I - \mu N, \\
\dot{S} &= F(N) - \beta \frac{IS}{N} -vS + \omega (N - S - I) - \mu S, \\
\dot{I} &= \beta \frac{IS}{N} - (\gamma + \delta + u)I - \mu I.
\end{array}
\end{equation}
Here $N$ represents the total number of individuals in the population, $N = S + I + R$, the function $F:[0,K) \to \mathbb{R}_+$ is the population growth function assumed to be logistic of the form $F(N) = \alpha N(1 - N/K)$.

\begin{table}[!h]
	\centering
	\begin{tabular}{cll}
		\hline
		{\bf Parameter} & {\bf Biological Meaning} & {\bf Values} \\
		\hline
		\hline
		$N_0$    & initial total population       & 5000 humans    \\
		$S_0$    & initial susceptible population & 4500 humans    \\
		$I_0$    & initial infected population    & 499  humans    \\
		$\alpha$ & population growth rate         & $4 \times 10^{-5}$  days$^{-1}$  \\
		$K$      & carrying capacity              & 5000           \\
		$\mu$    & natural death rate of population & $10^{-5}$  days$^{-1}$  \\
		$\beta$  & incidence rate                 & 0.5 days$^{-1}$      \\
		$\omega$ & waning rate                    & 0.01 days$^{-1}$     \\
		$\gamma$ & recovery rate                  & 0.1 days$^{-1}$      \\ 
		$\delta$ & death rate due to disease      & 0.1 days$^{-1}$      \\
		$B_1$    & cost per infection             & 1              \\
		$B_2$    & cost per vaccination           & 50             \\
		$B_3$    & cost per treatment             & 1000           \\
		$v_{\max}$& maximum vaccination rate  	  & 0.25       \\
		$T$      & horizon of analysis   		  & 100 days       \\
		\hline
	\end{tabular}
	\caption{Biologically feasible parameters.}
	\label{Parameter table}
\end{table}

Our goal is to minimize the amount of ill individuals with the lowest cost of vaccination and treatment over a time window, hence we choose the cost function
\begin{equation}
	C(t) := \label{cost_function_SIR}
	\int_{0}^{t} \left(B_1 I(s) + B_2v(s) + B_3u^2(s) \right)\dd s 
\end{equation}
The choice of the terms $B_1I$ and $B_3u^2$ follows \cite{gaff2009optimal}. When compared to treatment policies, vaccination is more easily implemented and hence appears we choose to make it appear linearly in the cost, as done in \cite{ledzewicz2011optimal}. The linear dependence on the vaccination might result in bang-bang optimal controls as in \cite{behncke2000optimal}, however, the parameters values in Table \ref{Parameter table} were chosen to favor the appearance of singular arcs among realistic parameters given in \cite{gaff2009optimal}.

Now we introduce the optimal control problem in Mayer form
\begin{equation}
\label{SIR_problem}
\begin{array}{cl}
\text{minimize} & C(T) \\
\text{subject to} & \eqref{NSI},\\
& 0\le v(t) \le v_{\max},\,\,\, 0\le u(t) \quad \text{a.e. on $[0,T]$}\\
& N(0) = N_0, \quad S(0) = S_0, \quad I(0) = I_0, \quad C(0) = 0.
\end{array}
\end{equation}
We show below that the restriction of non negativity on the nonlinear control $u$ is redundant, since a ``negative treatment" is never optimal. 
\begin{proposition}
	If $(\uh,\vh)$ is optimal for \eqref{SIR_problem}, then $\uh \geq 0 $ a.e. on $[0,T].$
	\label{non_negative_treatment}
\end{proposition}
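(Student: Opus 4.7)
The plan is to argue by contradiction via a direct comparison. Assume for contradiction that the set $A := \{t \in [0,T] : \uh(t) < 0\}$ has positive Lebesgue measure. I would introduce the competitor control $\tilde u := \max\{\uh,0\}$, which coincides with $\uh$ off $A$, vanishes on $A$, and satisfies $\tilde u \geq \uh$ pointwise with strict inequality on $A$. Keeping the linear control $\vh$ unchanged, the pair $(\tilde u,\vh)$ is feasible for \eqref{SIR_problem}, and I would denote by $(\tilde N,\tilde S,\tilde I)$ the trajectory of \eqref{NSI} driven by $(\tilde u,\vh)$ from the same initial data as $(\hat N,\hat S,\hat I)$.

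The core of the proof is the monotonicity comparison $\tilde I(t) \leq \hat I(t)$ for every $t \in [0,T]$: biologically, raising the treatment rate in the dissipative term $-uI$ of $\dot I$ can only shrink the infected compartment. To justify this rigorously, I would exploit the multiplicative structure $\dot I = I \cdot [\beta S/N - (\gamma+\delta+\mu) - u]$ and analyze the coupled differences $\Phi := \hat I - \tilde I$, $\Psi := \tilde S - \hat S$, $X := \tilde N - \hat N$, all vanishing at $t=0$, via either a Gronwall-type estimate on the linear system they satisfy or an invariant-region argument checking that the vector field of the difference system points into the orthant $\{\Phi,\Psi,X \geq 0\}$ along its boundary. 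Establishing this comparison for the coupled SIRS dynamics, where $S/N$ and $N$ also react to changes in $I$, is the main obstacle.

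With $\tilde I \leq \hat I$ in hand, the cost difference between the trajectories associated to $(\tilde u,\vh)$ and $(\uh,\vh)$ is
\[
\int_0^T B_1 \big(\tilde I - \hat I\big)\,dt + B_3 \int_0^T \big(\tilde u^2 - \uh^2\big)\,dt.
\]
The first integral is non-positive by the comparison, while the integrand of the second vanishes on $A^c$ and equals $-\uh^2 < 0$ on $A$, so the second integral is strictly negative. Thus the cost strictly decreases when $(\uh,\vh)$ is replaced by $(\tilde u,\vh)$, contradicting the optimality of $(\uh,\vh)$. This yields $\uh \geq 0$ almost everywhere, confirming that the bound $u \geq 0$ in \eqref{SIR_problem} is indeed redundant.
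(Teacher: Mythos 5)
Your strategy coincides with the paper's: truncate to $\tilde u := \max\{\uh,0\}$, keep $\vh$ fixed, show the infected population does not increase, and conclude that the $B_3u^2$ term strictly decreases on $A$ while the $B_1 I$ term does not increase, contradicting optimality. The one substantive issue is that you leave the comparison $\tilde I \le \hat I$ as a declared obstacle with two candidate strategies, rather than proving it; as written, the central lemma of your argument is asserted, not established. The paper closes this step with a scalar differential inequality: setting $z := \tilde I - I$ and using $\tilde u = \uh\chi_{\{\uh\ge 0\}}$, one computes
\[
\dot z \;=\; \beta\tilde I\frac{\tilde S}{\tilde N} - \beta I\frac{S}{N} - (\gamma+\delta+\mu)\,z - \uh\chi_{\{\uh\ge 0\}}\,z + \uh I\chi_{\{\uh<0\}},
\]
bounds the right-hand side by $a(t)z + b(t)$ with forcing term $b = \uh I\chi_{\{\uh<0\}} \le 0$, and applies Gronwall with $z(0)=0$ to conclude $z\le 0$; no coupled system is invoked. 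That said, the obstacle you identify is not imaginary: absorbing $\beta\tilde I\tilde S/\tilde N - \beta I S/N$ into a term $\beta c(t)z$ requires controlling the leftover $\beta I\bigl(\tilde S/\tilde N - S/N\bigr)$, which the paper's phrase ``a continuous function $c(t)$ depending on $\tilde S,\tilde N,S,N$'' glosses over; a fully airtight version does need the coupled difference (or invariant-region) argument you sketch for $(\tilde I-\hat I,\ \tilde S-\hat S,\ \tilde N-\hat N)$. So: right approach and a correct diagnosis of where the difficulty lies, but you must actually execute one of your two candidate arguments --- either reproduce the paper's scalar Gronwall estimate (accepting its implicit handling of the $S/N$ coupling) or carry out the coupled comparison --- before the proof is complete.
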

\begin{proof}
	Suppose an optimal solution $(\uh, \vh)$ is such that $\uh$ presents negative values in a set of positive measure. Define a new control strategy, where $\vh$ remains unchanged and exchange $\uh$ by $\ut := \max\{\uh, 0\}$. The cost associated with $\vh$ is unaffected, the term depending on the treatment, $\int_{0}^{T}u^2(s)\dd s$, is clearly less expensive for $\ut$ and it remains to be checked the influence on the cost associated with the amount of infected individuals of this strategy. 
	
	With this in mind, let $(N,S,I)$ and $(\tilde N, \St, \tilde I)$ be the solutions for \eqref{NSI} with the control strategies $(\uh, \vh)$ and $(\ut, \vh)$, respectively. To conclude our argument, it suffices to show that the quantity $z:= \tilde I - I$ is non positive. Note that
	\begin{align*}
	\dot{z} = \dot{\tilde I} - \dot{I} = \beta \tilde I \frac{\St}{\tilde N} - \beta I \frac{S}{N} - (\gamma + \delta + \mu)(\tilde I - I) + \uh I\chi_{\{\uh < 0\}}.
	\end{align*}
	Hence, we can define a continuous function $c(t)$, depending on $\St, \tilde N, S, N$, such that
	\begin{equation*}
	\dot{z} \le \left(\beta c(t) - \left(\gamma + \delta + \mu + \uh \xi_{\{\uh >0\}} \right)\right)z + \uh I\chi_{\{\uh < 0\} }.
	\end{equation*}
	Setting $a(t) := \beta c(t) - \left(\gamma + \delta + \mu + \uh \chi_{\{\uh >0\}} \right)$ and $b(t) := \uh I\chi_{\{\uh < 0\} }$, by Gronwall's lemma, we have that
	\begin{equation*}
	z(t) \le z(0)\exp\left(\int_{0}^t a(s) \dd s\right) + \int_{0}^{t}b(s)\exp\left(\int_{0}^s a(\sigma) \dd \sigma\right)\dd s.
	\end{equation*}
	By definition, $z(0) = 0$ and $b \le 0$, thus $z\le 0$, this is $\tilde I \le I$. 
\end{proof}
With the aid of the previous Proposition \ref{non_negative_treatment}, our control problem \eqref{SIR_problem} satisfies all assumptions from Section \ref{simpler_control_constraints}, since the constraint $u\ge 0$ can be removed, and we can apply our algorithm. The singular vaccination strategies are obtained using the expression for $\ddot{H}_v$ derived in \eqref{kernel_equation.prev1}. The complete analytical computation can be found in Appendix \ref{appendix}, however, our computational implementation relies on SymEngine - a {\em Computer Algebra System} (CAS), see \cite{certiksymengine} - that automates this laborious task and other computations necessary to formulate our algorithm.

As done for the previous example, we used BOCOP \cite{bonnans2012bocop} to get an estimate of the shooting parameters and switching times in order to initialize our algorithm. The results are shown in Figures \ref{trajectories} and \ref{controls}.

\begin{figure}[h!]
	\centering
	\includegraphics[width=1.4\linewidth, center]{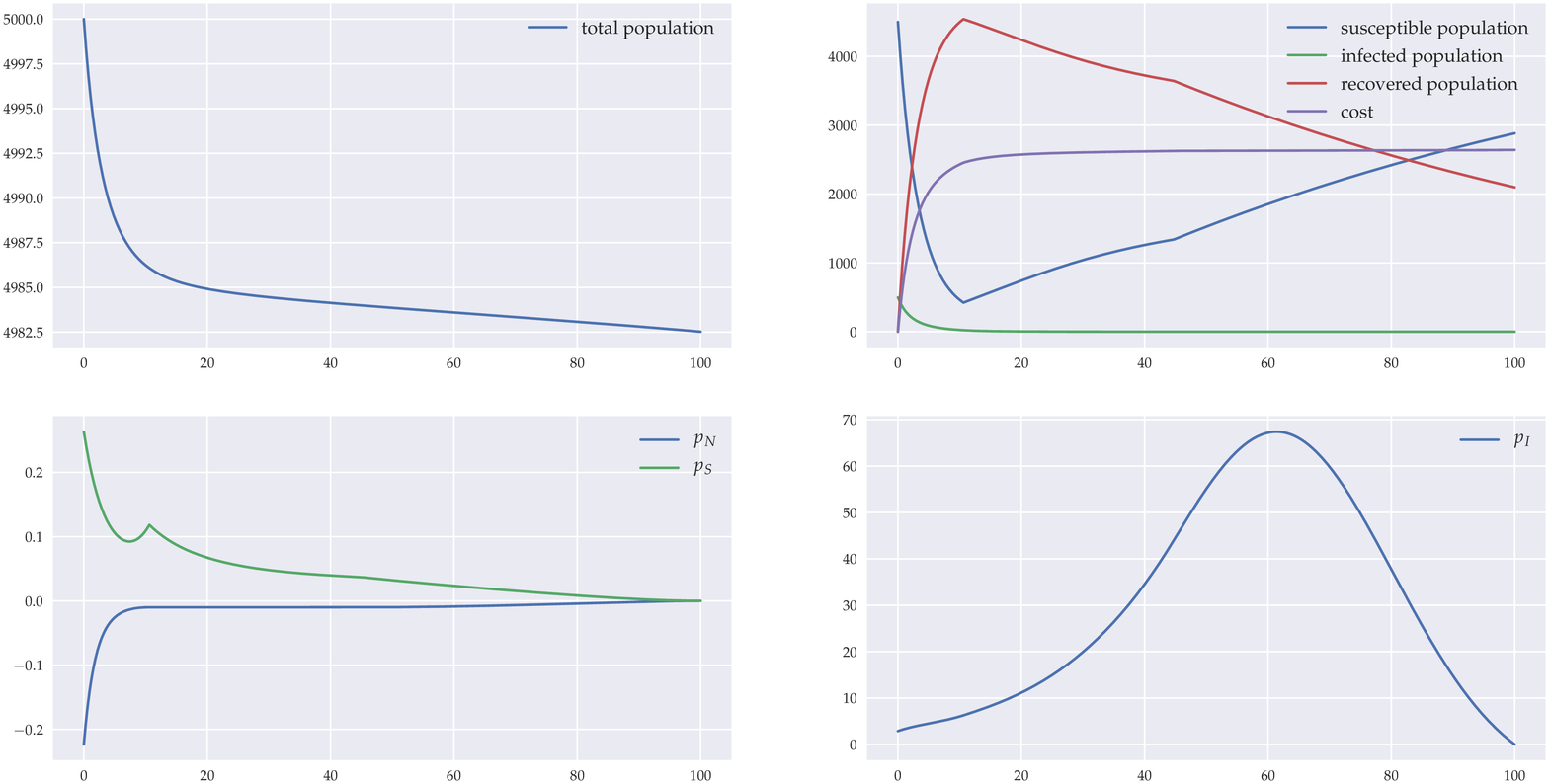}\\
	\indent 
	\caption[]{Optimal trajectories for problem \eqref{SIR_problem}.}
	\label{trajectories}
\end{figure} 

\begin{figure}[h!] 
	\indent{\centering\includegraphics[width=1.4\linewidth, center]{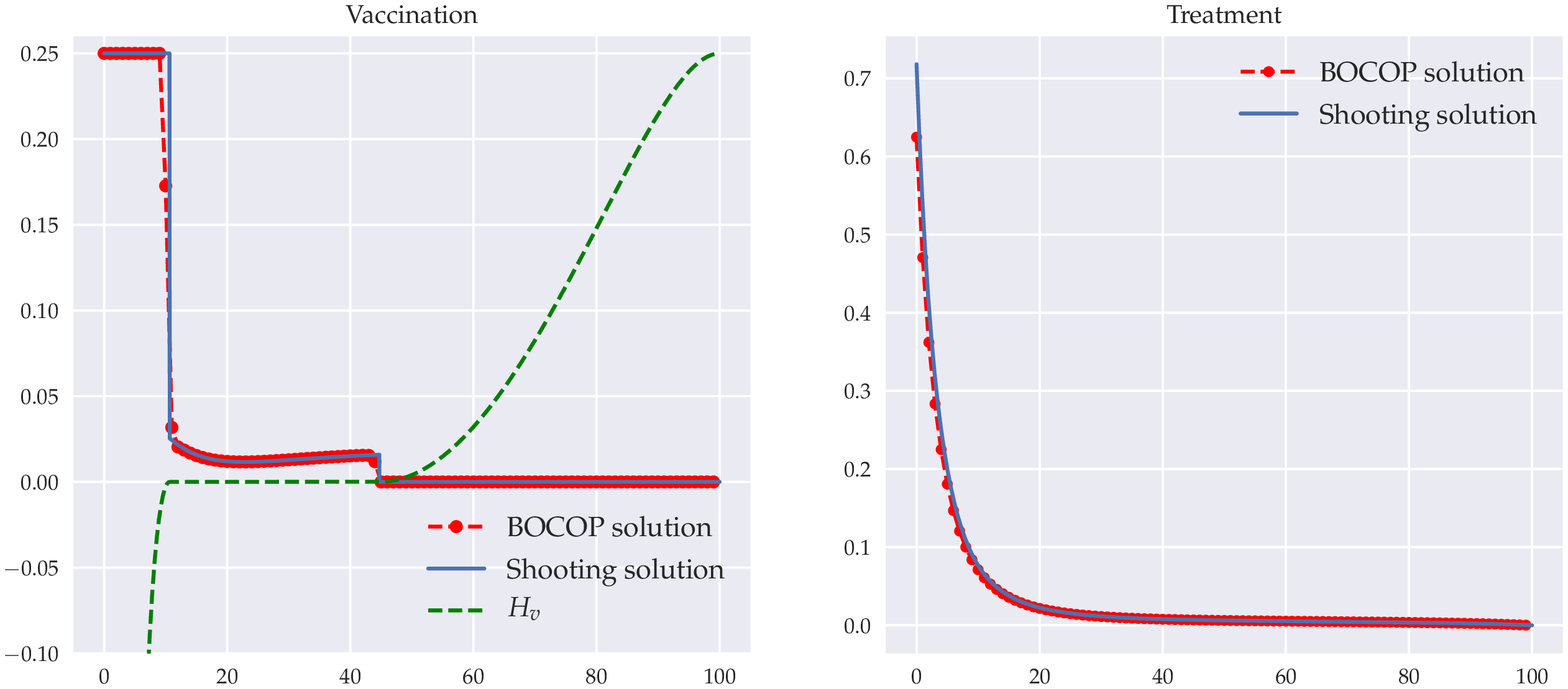}}\\
	\indent 
	\caption[]{Optimal controls for problem \eqref{SIR_problem}.}
	\label{controls}
\end{figure}

\section{Conclusion}
\label{conclusion}
In this article we have studied the shooting algorithm for partially-affine optimal control problems, this is, problems where some control components appear linearly and others non linearly in the Hamiltonian. Many of the results here discussed are extensions of previous works concerning the totally-affine case. Such extensions were only possible after the development of no-gap second order necessary and sufficient conditions for weak optimality (given in \cite{Aronna2018}). We have also revised second order analysis results that enabled us to provide a more detailed characterization of singular controls. Additionally, we were able to relate the mentioned sufficient conditions to the well-known strengthened generalized Legendre-Clebsch conditions. Concerning the implementation of the shooting algorithm, we were able to automate lengthy and tedious computations necessary for its formulation.

The case with control constraints is also tackled, by means of a transformation that reduces this case to the unconstrained one. Both this transformation and the computation of singular controls are automated in our implementation, which is demonstrated by two numerical examples. The first one was chosen to illustrate how even simple problems can become fairly large once we introduce the associated transformed problem. The second example discusses the optimal control problem of an SIRS epidemiological model with vaccination and treatment acting as controls. This problem requires lengthy computations to obtain the analytical expressions of the singular arcs and serves as a proof of usefulness of our automated implementation. 

\appendix

\section{Computation of Singular Vaccination Strategies}
\label{appendix}
In this appendix we develop the computations of the singular vaccination strategies from Section \ref{SIRS_OC} in full detail. To shorten notation, we define the state vector $x := (N, S, I, C)^T$ and rewrite the dynamics as
\begin{equation}
	\begin{array}{cc}
		\dot{x} &= f_0(x,u) + vf_1(x)\\
	\end{array}
\end{equation}
where 
\begin{equation}
	f_0(x, u) = \left(
	\begin{array}{l}
		F(N) - \delta I - \mu N\\
		F(N) - \beta\frac{IS}{N} + \omega R - \mu S\\
		\beta\frac{IS}{N} - (\delta + \gamma + u + \mu)I\\
		B_1I + B_3 u^2
	\end{array}
	\right), \quad 
	f_1(x) = \left(
	\begin{array}{c}
		0\\
		-S\\
		0\\
		B_2
	\end{array}
	\right)
\end{equation}

Following the arguments from Section \ref{differential_algebraic}, the singular arcs for the linear control, {\em i.e.} vaccination, satisfy the following expression
\begin{equation}
	\gamma_{01} + v_{\rm sing}\gamma_{11} = 0.
\end{equation}
Let us compute the quantities $\gamma_{01}$ and $\gamma_{11}$, as defined in \eqref{kernel_equation.prev1}. Initially note that 
{\small 
\begin{gather*}
	Df_0 = \left(
	\begin{array}{cccc}
		F'(N) & 0 & -\delta & 0\\
		F'(N) + \omega + \beta\frac{SI}{N^2} &  -\omega - \beta \frac{I}{N} & -\omega -\beta\frac{S}{N} & 0\\
		-\beta\frac{SI}{N^2} & \frac{I}{N} & \beta\frac{S}{N} - (\delta + \gamma + u + \mu) & 0\\
		0 & 0 & B_1 & 0
	\end{array}
	\right), \\
	Df_1 = \left(
	\begin{array}{cccc}
		0 & 0 & 0 & 0\\
		0 & -1 & 0 & 0\\
		0 & 0 & 0 & 0\\
		0 & 0 & 0 & 0
	\end{array}
	\right).
\end{gather*}}
In order to compute $\gamma_{01}$ and $\gamma_{11}$, we start with the Lie bracket $[f_0, f_1]$:
\begin{align*}
	[f_0, f_1] = Df_1f_0 - Df_0f_1
	=
	\left(
	\begin{array}{c}
		0\\
		-\left(F(N) + \omega(N-I)\right)\\
		\beta \frac{SI}{N}\\
		0
	\end{array}
	\right).
\end{align*}
Notice that $[f_0,f_1]$ does not depend on the nonlinear control $u$, hence the expressions for $\gamma_{01}$ and $\gamma_{11}$ become $p\cdot[f_0, [f_0, f_1]]$ and $p\cdot[f_1, [f_0, f_1]]$, respectively. For $p\cdot[f_1, [f_0, f_1]]$ we have
{\small
\begin{align*}
	[f_1, [f_0, f_1]] & = D[f_0, f_1]f_1 - Df_1[f_0, f_1]\\
	& 
	=\left(
	\begin{array}{cccc}
		0 & 0 & 0 & 0 \\
		-F'(N) - \omega & 0 & \omega & 0\\
		-\beta\frac{SI}{N^2} & \beta\frac{I}{N} & \beta\frac{S}{N} & 0\\
		0 & 0 & 0& 0
	\end{array}
	\right)
	\left(
	\begin{array}{c}
		0\\
		-S\\
		0\\
		B_2
	\end{array}
	\right)
	\\
	&\quad- \left(
	\begin{array}{cccc}
		0 & 0 & 0 & 0\\
		0 & -1 & 0 & 0\\
		0 & 0 & 0 & 0\\
		0 & 0 & 0 & 0
	\end{array}
	\right)
	\left(
	\begin{array}{c}
		0\\
		-\left(F(N) + \omega(N-I)\right)\\
		\beta \frac{SI}{N}\\
		0
	\end{array}
	\right)\\
	& 
	=\left(
	\begin{array}{c}
		0\\
		0\\
		-\beta\frac{SI}{N}\\
		0
	\end{array}
	\right)
	-
	\left(
	\begin{array}{c}
		0\\
		F(N) + \omega(N-I)\\
		0\\
		0
	\end{array}
	\right) = -2\left(
	\begin{array}{c}
	0\\
	0\\
	\beta\frac{SI}{N}\\
	0
	\end{array}
	\right) + [f_0, f_1].
\end{align*}}
Using the Goh conditions \eqref{Goh.condition} and the fact that $\dot{H}_v = 0$, we obtain
\begin{equation}
	\gamma_{11} = p\cdot[f_1, [f_0, f_1]] = -2\beta\frac{SIp_I}{N}.
\end{equation}
Moving on to $[f_0, [f_0, f_1]]$, after some algebraic simplifications, we have
{\small
\begin{align*}
	&[f_0, [f_0, f_1]] = D[f_0, f_1]f_0 - Df_0[f_0, f_1]\\
	&= \left(
	\begin{array}{cccc}
	0 & 0 & 0 & 0 \\
	-F'(N) - \omega & 0 & \omega & 0\\
	-\beta\frac{SI}{N^2} & \beta\frac{I}{N} & \beta\frac{S}{N} & 0\\
	0 & 0 & 0& 0
	\end{array}
	\right)
	\left(
	\begin{array}{l}
	F(N) - \delta I  - \mu N\\
	F(N) - \beta\frac{IS}{N} + \omega R - \mu S\\
	\beta\frac{IS}{N} - (\delta + \gamma + u + \mu)I\\
	B_1I + B_3\tau^2
	\end{array}
	\right)\\
	& - \left(
	\begin{array}{cccc}
	F'(N) & 0 & -\delta & 0\\
	F'(N) + \omega + \beta\frac{SI}{N^2} &  -\omega - \beta \frac{I}{N} & -\omega -\beta\frac{S}{N} & 0\\
	-\beta\frac{SI}{N^2} & \frac{I}{N} & \beta\frac{S}{N} - (\delta + \gamma + u + \mu) & 0\\
	0 & 0 & B_1 & 0
	\end{array}
	\right)
	[f_0,f_1]\\
	\if{
	&= \left(
	\begin{array}{c}
		0\\
		-(F'(N) + \omega)(F(N) - \delta I) + \omega \left(\beta\frac{SI}{N} - (\delta + \gamma + u) I\right)\\
		-\beta\frac{SI}{N^2}(F(N) - \delta I) + \beta\frac{I}{N}(F(N) - \omega (N-I)) + {\color{red}\beta\frac{SI}{N}( -\beta\frac{I}{N} - \omega)} + {\color{blue}\beta \frac{S}{N}\left(\beta\frac{SI}{N} - (\delta + \gamma + u + \mu) I\right)}\\
		0
	\end{array}
	\right) \\& \quad-
	\left(
	\begin{array}{c}
		-\delta\beta \frac{SI}{N}\\
		{\color{red} \left(\omega +\beta \frac{I}{N}\right)(F(N) + \omega(N-I))} - \left(\omega + \beta \frac{S}{N}\right)\beta\frac{SI}{N}\\
		-(F(N)+\omega(N-I))\beta\frac{I}{N} + {\color{blue}\beta\frac{S}{N}\left(\beta\frac{SI}{N} - (\delta + \gamma + u + \mu)I\right)}\\
		B_1\beta\frac{SI}{N}
	\end{array}
	\right)\\ }\fi
	&= \beta\frac{SI}{N}w_1 +  w_2+\left(\beta\frac{I}{N} + \omega\right)[f_1, [f_0, f_1]],
\end{align*}}
where the vectors $w_1$ and $w_2$ are given by
{\small 
\begin{gather*}
	w_1 := \left(
	\begin{array}{c}
	\delta\\
	2\omega + \beta\frac{S}{N}\\
	-(F(N) - \delta I)/N + 2\beta\frac{I}{N}\frac{p_I}{p_S}\\
	-B_1
	\end{array}
	\right), \\
	w_2 :=
	\left(
	\begin{array}{c}
	0\\
	-(F'(N) + \omega)(F(N) - \delta I) - \omega I(\delta + \gamma + u)\\
	0\\
	0
	\end{array}
	\right).
\end{gather*}
}
Notice that the appearance of the term $[f_1, [f_0, f_1]]$ simplifies the final expression of the singular controls since this term cancels out with the denominator $\gamma_{11}$. Hence, the expression for the singular control becomes
\begin{align}
	v_{\rm sing} = - \frac{\gamma_{01}}{\gamma_{11}} = -\left(\omega + \beta\frac{I}{N}\right)+\frac{p}{2p_I}\cdot \left(w_1 + \frac{N}{\beta SI}w_2\right), 
\end{align}

\section{Proof of Technical Lemmas}
\label{appendix_Goh.computations}
\subsection{Proof of Lemma \ref{goh_computations}}
In this section we prove the following identity
\begin{equation}
\label{appendix_identity}
	E = -\frac{\partial \dot{H}_v}{\partial u} \quad \text{ and } - \frac{\partial \ddot{H}_v}{\partial v} = R - EH_{uu}^{-1}E^T,
\end{equation}
that are relevant in the recovery of the strengthened Legendre-Clebsch conditions \eqref{LC-like.mixedcontrols} from the sufficient conditions stated in Theorem \ref{SOSC}. Our strategy will be to establish the equality of the matrices involved entry wise. 

The first identity in \eqref{appendix_identity} is easily obtained with the definition of $E$ in \eqref{matrixM}. Before proceeding to the second one, let us establish some conventions that will make the computations clearer. Many conditions throughout the text state that some quantity $Q$ is null when evaluated along the optimal trajectories. For instance, we can recall the Goh conditions $\ph\cdot [f_i, f_j](\wh) = 0$. We want to stress out a distinction from the case that some other quantity $N$ identically assumes the value $0$, as is the case for $H_{vv} \equiv 0$. We will make a distinction of these two cases with the following notation
\begin{equation}
	Q = 0, \quad N \equiv 0.
\end{equation}
Naturally, if we take the time derivative of some quantity $Q = 0$, this property is maintained and we obtain $\dot{Q} = 0$. However, this is not true when we take partial derivatives, this is, $\partial_v Q$ is not necessarily null. With this in mind we recall the expressions from \eqref{kernel_equation.prev1} that were used to obtain the linear controls. While these expressions are suitable for this task, we cannot use them to compute the partial derivatives $\partial_v \ddot{H}_v$ since we have removed terms that vanish due to the Goh conditions in Proposition \eqref{Goh.condition} or as a consequence of the Legendre-Clebsch conditions \eqref{LegendreClebsch.mixed.equivalent}.

The full expressions we are interested in are still easily obtainable by using formula \eqref{genericvectorfield.D1}. We get,
\begin{align}
	\if{
	\label{Hv}
	H_{v_i} &= \ph\cdot f_i,\\
	\label{Hvdot}
	\dot{H}_{v_i} &= \ph\cdot[f_0, f_i] + \underbrace{ \sum_{k = 1}^m \vh_k\ph\cdot[f_k,f_i]}_{\textnormal{  $= 0,$ Goh  conditions}} + \underbrace{ H_{v_iu}\dot{\uh}, }_{\textnormal{  $= 0$, LC  conditions}}\\
	}\fi
	\label{Hvddot}
	\begin{split}
	\ddot{H}_{v_i} &=  
	\ph\cdot[f, [f_0, f_i]] + \ph\cdot D_u [f_0,f_i] \dot{\uh}\\
					&+ \sum_{k = 1}^m\left\{ \dot{\vh}_k\ph\cdot[f_k,f_i] + \vh_k \frac{\dd}{\dd t} \ph\cdot[f_k,f_i]\right\} + \frac{\dd}{\dd t}\left(H_{v_iu}\right)\dot{\uh} + H_{v_iu}\ddot{\uh}. 
	\end{split}
\end{align}
Notice that the coefficient of $\ddot{\uh}$ is zero, so we do not require further regularity for $\uh$. Taking the partial derivative w.r.t. $v_j$ in \eqref{Hvddot} yields
\begin{equation}
\label{DvddotHv}
\begin{split}
	&\frac{\partial \ddot{H}_{v_i}}{\partial v_j} =  \ph\cdot[f_j, [f_0,f_i]] + \ph\cdot D_u [f_0,f_i]\frac{\partial \dot{\uh}}{\partial v_j}\\
	& + \sum_{k = 1}^m\left\{ \frac{\partial \dot{\vh}_k}{\partial v_j}\underbrace{\ph\cdot[f_k,f_i]}_{ = 0} + \dot{\vh}_k \ph\cdot\underbrace{\frac{\partial }{\partial v_j}[f_k,f_i]}_{\equiv 0} + \frac{\partial \vh_k}{\partial v_j}\underbrace{\frac{\dd}{\dd t} \ph\cdot[f_k,f_i]}_{= 0}+ \vh_k \underbrace{\frac{\partial}{\partial v_j}\frac{\dd}{\dd t} \ph\cdot[f_k,f_i]}_{=: A_k}
	\right\}\\
	& + \underbrace{\frac{\partial}{\partial v_j}\frac{\dd}{\dd t}\left(H_{v_iu}\right)\dot{\uh}}_{=: B} + \underbrace{\frac{\dd}{\dd t}\left(H_{v_iu}\right)}_{= 0}\frac{\partial \dot{\uh}}{\partial v_j} + \underbrace{\frac{\partial}{\partial v_j}H_{v_iu}}_{\equiv 0}\ddot{\uh} + \underbrace{H_{v_iu}}_{= 0}\frac{\partial \ddot{\uh}}{\partial v_j}.
\end{split}
\end{equation}
Once again, the coefficients of $\dot{\vh}$ and $\ddot{\uh}$ vanish so we do not require any further regularity on the optimal controls. By computing the remaining time derivatives, we obtain the expressions
\begin{align}
	A_k = \frac{\partial}{\partial v_j}\frac{\dd}{\dd t} \ph\cdot[f_k,f_i]  &= \ph\cdot[f_j,[f_k,f_i]] + \ph\cdot D_u[f_k,f_i] \frac{\partial \dot{\uh}}{\partial v_j},\\
	B = \frac{\partial}{\partial v_j}\frac{\dd}{\dd t}\left(H_{v_iu}\right)\dot{\uh} & = \ph\cdot \left(
	\frac{\partial^2 f_i}{\partial x\partial u}f_j - \frac{\partial f_j}{\partial x}\frac{\partial f_i}{\partial u}
	\right)\dot{\uh} + \dot{\uh}^T H_{v_iuu}\frac{\partial \dot{\uh}}{\partial v_j}.
\end{align}
\if{
and our expression for $\partial_{v_j} \ddot{H}_{v_i}$ becomes
\begin{equation}
	\frac{\partial \ddot{H}_{v_i}}{\partial v_j} = p\cdot[f_j, [f,f_i]] +  p\cdot D_u[f,f_i] \frac{\partial \dot{u}}{\partial v_j} + \frac{\partial}{\partial v_j}\frac{\dd}{\dd t}\left(H_{v_iu}\right)\dot{u}.
\end{equation}
}\fi
The proof of identity \eqref{appendix_identity} is organized in the following 3 claims.
\begin{claim}
	\label{claim1}
	The entries of the matrix $R = f_v^TH_{xx}f_v - \left(H_{vx}B + (H_{vx}B)^T\right) - \dot{S}$, given in equation \eqref{matrixM}, satisfy 
	\begin{equation*}
		R_{ij} = -\left\{ \ph\cdot[f_j, [f, f_i]] + \ph\cdot \left(
		\frac{\partial^2 f_i}{\partial x\partial u}f_j - \frac{\partial f_j}{\partial x}\frac{\partial f_i}{\partial u}
		\right)\dot{\uh} \right\}.
	\end{equation*}
\end{claim}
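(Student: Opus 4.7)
The plan is to verify the identity entrywise by expanding the three pieces of $R_{ij}$---namely $(f_v^T H_{xx} f_v)_{ij}$, $-((H_{vx}B)_{ij} + (H_{vx}B)_{ji})$, and $-\dot{S}_{ij}$---and collecting terms into Lie bracket structures by repeated use of Goh's conditions from Proposition \ref{Goh.condition} and their time derivatives.

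First, I would rewrite the $j$-th column of $B$ in Lie bracket form. Using $f_x = D_x f_0 + \sum_k \vh_k D_x f_k$ together with $\frac{d}{dt} f_j = D_x f_j \dot{\xh} + D_u f_j \dot{\uh}$ and $\dot{\xh} = f_0 + \sum_k \vh_k f_k$, a direct rearrangement yields
\begin{equation*}
B_{\cdot j} = [f_0, f_j] + \sum_k \vh_k [f_k, f_j] - D_u f_j \dot{\uh}.
\end{equation*}
Consequently $(H_{vx} B)_{ij} = \ph D_x f_i [f_0, f_j] + \sum_k \vh_k \ph D_x f_i [f_k, f_j] - \ph D_x f_i D_u f_j \dot{\uh}$, with an analogous expression for $(H_{vx} B)_{ji}$.

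Next, I would differentiate $S_{ij} = \tfrac{1}{2}\ph\cdot(D_x f_i \cdot f_j + D_x f_j \cdot f_i)$ along the trajectory, using the adjoint equation $\dot{\ph} = -\ph f_x$ to replace $\dot{\ph}$ and the state equation to replace $\dot{\xh}$. The $\dot{\uh}$-contributions produce mixed terms of the form $\ph\cdot\partial^2_{xu}f_i(f_j,\dot{\uh})$ and $\ph \cdot D_x f_i \cdot D_u f_j \dot{\uh}$ (and $i \leftrightarrow j$), while the $D_x f \cdot \dot{\xh}$-contributions combine with $(f_v^T H_{xx} f_v)_{ij} = \ph\cdot D^2_{xx}f(f_i, f_j)$ and the Lie brackets produced by the $B$ expansion. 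Using $[f_j, [f, f_i]] = D_x[f, f_i]\cdot f_j - D_x f_j \cdot [f, f_i]$ from \eqref{Lie.Brackets} and expanding $[f, f_i] = D_x f_i \cdot f - D_x f \cdot f_i$, these reassemble into $\ph\cdot[f_j, [f, f_i]]$. The $\vh_k$-dependent residuals are killed by Goh's identity $\ph\cdot[f_k, f_i] = 0$ and its total time derivative $\tfrac{d}{dt}(\ph\cdot[f_k, f_i]) = 0$.

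The main obstacle I anticipate is the bookkeeping: $\dot{S}$ spawns roughly a dozen distinct terms, and the target expression $\ph\cdot[f_j,[f, f_i]] + \ph\cdot(\partial^2_{xu}f_i\cdot f_j - D_x f_j \cdot D_u f_i)\dot{\uh}$ is manifestly asymmetric in $(i,j)$ while $R_{ij}$ is not. The hidden symmetry of the right-hand side is a consequence of Jacobi's identity applied to the time derivative $\tfrac{d}{dt}(\ph\cdot[f_i, f_j]) = 0$, which expresses $\ph\cdot[f, [f_i, f_j]]$ as $-\ph\cdot\partial_u[f_i, f_j]\dot{\uh}$; this must be used to reconcile the two asymmetric forms, and so one has to retain Goh's identity in its full Leibniz-differentiated form and avoid substituting $\ph\cdot[f_k, f_i] = 0$ prematurely. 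A minor point is that $\dot{\uh}$ appears in $\dot{S}$ and must be well defined, which is guaranteed by Assumption \ref{regularity.controls}.
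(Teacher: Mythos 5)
Your plan follows essentially the same route as the paper's proof: expand $R_{ij}$ entrywise, write the columns of $B$ as a Lie bracket plus a $D_uf_j\,\dot{\uh}$ term, differentiate $S$ along the Hamiltonian flow, and let Goh's conditions and their time derivatives absorb the residuals. Two remarks. First, with the paper's bracket convention \eqref{Lie.Brackets} one gets $B_{(:,j)}=-[f,f_j]-D_uf_j\,\dot{\uh}$, i.e.\ the opposite sign on the bracket part from what you wrote; since the target identity is sign-sensitive and asymmetric in $(i,j)$, this must be corrected, although it does not change the structure of the argument. Second, the paper sidesteps the symmetry reconciliation you anticipate as the main obstacle: invoking Goh's conditions it replaces the symmetrized $S$ by $H_{vx}f_v$ at the outset and differentiates the single product $\ph\cdot\frac{\partial f_i}{\partial x}f_j$, which, after adding and subtracting $\ph\cdot\frac{\partial^2 f}{\partial x^2}f_if_j$ and substituting $\ph\cdot\frac{\partial f}{\partial x}f_i=\ph\cdot\frac{\partial f_i}{\partial x}f+\ph\cdot[f_i,f]$ (keeping the vanishing bracket term explicit), produces the asymmetric right-hand side directly, with no appeal to the Jacobi identity. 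Your symmetrized version works as well but, as you correctly note, costs an extra application of $\frac{\dd}{\dd t}\bigl(\ph\cdot[f_i,f_j]\bigr)=0$ at the end to pass from the symmetric average to the stated asymmetric form.
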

\begin{claim}
	\label{claim2}
	It holds 
	\begin{equation*}
		\frac{\partial \dot{\uh}}{\partial v_j} = - H_{uu}^{-1}E^T_{(:,j)},
	\end{equation*}
	where the matrix $E = f_v^TH^T_{ux} - H_{vx}f_u$ was introduced in \eqref{matrixM}.
\end{claim}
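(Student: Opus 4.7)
The plan is to exploit the identity \eqref{H_udot}, already recorded in the paper, which expresses the vanishing of the total time derivative of $H_u(\wh,\ph) = 0$ along the optimal trajectory. Since the standing condition \eqref{LC-like.mixedcontrols} makes $H_{uu}$ invertible, \eqref{H_udot} solves for $\dot{\uh}$ explicitly:
$$\dot{\uh} = -H_{uu}^{-1}\bigl(H_{ux}\dot{\xh} + H_{up}\dot{\ph}\bigr).$$
The required partial derivative with respect to $v_j$ will then be obtained by varying only the velocities $\dot{\xh}, \dot{\ph}$, whose dependence on $v$ is explicit through the Hamiltonian dynamics.

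Concretely, from $\dot{\xh} = f_0 + \sum_k v_k f_k$ and $\dot{\ph} = -\ph\cdot\bigl(f_{0,x} + \sum_k v_k f_{k,x}\bigr)$ one reads off $\partial \dot{\xh}/\partial v_j = f_j$ and $\partial \dot{\ph}/\partial v_j = -\ph\cdot f_{j,x}$. Substituting these yields
$$\frac{\partial \dot{\uh}}{\partial v_j} = -H_{uu}^{-1}\bigl(H_{ux}f_j - H_{up}(\ph\cdot f_{j,x})^T\bigr).$$
It then remains to identify the bracket with $E^T_{(:,j)}$. Transposing the definition from \eqref{matrixM} gives $E^T = H_{ux}f_v - (H_{vx}f_u)^T$; clearly $H_{ux}f_j$ is the $j$-th column of $H_{ux}f_v$, so the only thing to verify is that $H_{up}(\ph\cdot f_{j,x})^T$ equals the $j$-th column of $(H_{vx}f_u)^T$. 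An entry-wise inspection, using $H_{up} = f_u^T$ (immediate from $H = \ph\cdot f$), confirms this, and the claim follows.

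The only real obstacle is notational: since $\ph$ is a row vector while $H_{ux}$, $H_{up}$ and $f_u$ are handled as matrices with mixed linear-form and matrix conventions, the transpositions must be tracked carefully in the final identification step. Beyond that, the proof is a short exercise once the invertibility of $H_{uu}$ granted by \eqref{LC-like.mixedcontrols} is invoked to legitimise solving \eqref{H_udot} for $\dot{\uh}$, and no new analytical ingredient is needed beyond those already introduced in the paper.
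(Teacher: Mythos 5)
Your final formula is right and your starting point --- solving \eqref{H_udot} for $\dot{\uh}$ using the invertibility of $H_{uu}$ --- is the same as the paper's. But there is a genuine gap in the differentiation step: you declare that $\partial\dot{\uh}/\partial v_j$ is ``obtained by varying only the velocities $\dot{\xh},\dot{\ph}$'', treating $H_{uu}^{-1}$, $H_{ux}$ and $H_{up}=f_u^T$ as constants in $v_j$. They are not: since $H=\ph\cdot\bigl(f_0+\sum_i v_i f_i\bigr)$, each of $H_{uu}$, $H_{ux}$ and $f_u$ depends explicitly on $v$ (e.g.\ $\partial H_{uu}/\partial v_j=\ph\cdot\partial^2_{uu}f_j$, which has no reason to vanish --- the Legendre--Clebsch identity kills $H_{uv_j}=\ph\cdot\partial_u f_j$ along the extremal, but not its $u$-derivative). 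Differentiating $\dot{\uh}=-H_{uu}^{-1}\bigl(H_{ux}f-f_u^TH_x^T\bigr)$ with respect to $v_j$ therefore produces, besides the term you keep, the contributions $-H_{uu}^{-1}\bigl(H_{v_jux}f-f_{j,u}^TH_x^T\bigr)$ and $-\partial_{v_j}(H_{uu}^{-1})\bigl(H_{ux}f-f_u^TH_x^T\bigr)$. The paper's proof is devoted precisely to showing that these two extra terms cancel: the first equals $H_{uu}^{-1}H_{uuv_j}\dot{\uh}$ because $\tfrac{\dd}{\dd t}H_{uv_j}=0$ along the extremal (a consequence of \eqref{LegendreClebsch.mixed.equivalent}), the second equals $\partial_{v_j}(H_{uu}^{-1})H_{uu}\dot{\uh}$, and their sum is $\partial_{v_j}\bigl(H_{uu}^{-1}H_{uu}\bigr)\dot{\uh}=0$. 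Omitting this cancellation is not a notational issue; it is the substantive content of the lemma.

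A clean way to repair your argument without reproducing the paper's bookkeeping is to differentiate the implicit relation $H_{ux}\dot{\xh}+H_{up}\dot{\ph}+H_{uu}\dot{\uh}=0$ with respect to $v_j$ \emph{before} inverting $H_{uu}$: the terms coming from the $v_j$-dependence of the coefficients assemble into $H_{uxv_j}\dot{\xh}+H_{upv_j}\dot{\ph}+H_{uuv_j}\dot{\uh}=\tfrac{\dd}{\dd t}H_{uv_j}=0$, and what remains is exactly $H_{ux}f_j+f_u^T\partial_{v_j}\dot{\ph}+H_{uu}\,\partial_{v_j}\dot{\uh}=0$, which gives the claim after your (correct) identification of $H_{ux}f_j-f_u^T(\ph\cdot f_{j,x})^T$ with $E^T_{(:,j)}$. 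Either way, the appeal to $H_{uv}\equiv 0$ along the extremal must be made explicitly.
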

\begin{claim}
	\label{claim3}
	For the matrix $E$ given in \eqref{matrixM}, the following expression holds
	\begin{equation*}
		- E_{(i,:)} = {\ph\cdot D_u [f,f_i] + \dot{\uh}^TH_{v_iuu}}.
	\end{equation*}
\end{claim}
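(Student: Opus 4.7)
The plan is to verify Claim \ref{claim3} by a direct component-wise expansion of both sides, with the only conceptual input being that the Legendre-Clebsch identity $H_{uv}\equiv 0$ is conserved in time along the Hamiltonian flow and its time derivative supplies the remaining cancellation.

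First I would expand the $k$-th entry of $E_{(i,:)}=f_i^TH_{ux}^T-H_{v_ix}f_u$ in coordinates. Since $(H_{ux})_{kj}=\ph\cdot\partial^2f/\partial u_k\partial x_j$ and $H_{v_ix}=\ph\cdot D_xf_i$, the computation collapses neatly into a Lie bracket, giving $(E_{(i,:)})_k = \ph\cdot[f_i,f_{u_k}]$ and hence $-(E_{(i,:)})_k = \ph\cdot[f_{u_k},f_i]$. On the other hand, differentiating $[f,f_i]=D_xf_i\cdot f-D_xf\cdot f_i$ in $u_k$ by the product rule and regrouping the four resulting terms yields
\begin{equation*}
\ph\cdot(D_u[f,f_i])_k \;=\; \ph\cdot[f,f_{i,u_k}] + \ph\cdot[f_{u_k},f_i].
\end{equation*}
The second summand matches $-(E_{(i,:)})_k$, so the claim reduces to showing the cancellation
\begin{equation*}
\ph\cdot[f,f_{i,u_k}] + (\dot{\uh}^TH_{v_iuu})_k = 0.
\end{equation*}

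The final step, which I expect to be the main (and only non-routine) point, is to establish this identity. The Legendre-Clebsch condition \eqref{LegendreClebsch.mixed.equivalent} gives $H_{v_iu_k}=\ph\cdot f_{i,u_k}\equiv 0$ along the extremal. Differentiating in time and using $\dot{\ph}=-\ph\cdot f_x$, $\dot{\xh}=f$, and $\tfrac{\dd}{\dd t}f_{i,u_k}=D_xf_{i,u_k}\cdot f+\sum_j f_{i,u_ku_j}\dot{\uh}_j$, the Lie bracket emerges naturally:
\begin{equation*}
0 \;=\; \frac{\dd}{\dd t}\big(\ph\cdot f_{i,u_k}\big) \;=\; \ph\cdot[f,f_{i,u_k}] + \sum_j\dot{\uh}_j(H_{v_iuu})_{kj},
\end{equation*}
which, after using symmetry of $H_{v_iuu}$ to rewrite the last sum as $(\dot{\uh}^TH_{v_iuu})_k$, is the required cancellation. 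The difficulty is purely bookkeeping of indices and transposes; the analytical content is just the time differentiation of the Legendre-Clebsch identity.
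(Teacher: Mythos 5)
Your proposal is correct and follows essentially the same route as the paper: both split $\ph\cdot D_u[f,f_i]$ by the product rule into a group identified with $-E_{(i,:)}$ and a remainder, and both kill the remainder by differentiating the Legendre--Clebsch identity $H_{v_iu}=0$ in time along the extremal to produce the $-\dot{\uh}^TH_{v_iuu}$ term. The only difference is presentational (entry-wise Lie-bracket bookkeeping versus the paper's matrix form), so no further comment is needed.
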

\noindent
{\em Proof of Claim \ref{claim1}.}
	In our case, where we assume uniqueness of multipliers, the matrix $S$ given in \eqref{matrixG} takes the form $S = H_{vx}f_v$, since $H_{vx}f_v$ is symmetric due to Goh conditions. For $i,j=1,\dots,m,$ we obtain
	\begin{equation}\label{dotS}
	\begin{split}
	\dot{S}_{ij} = \frac{\dd}{\dd t}\left(\ph\cdot\frac{\partial f_i}{\partial x}f_j\right) 
	& = \ph\cdot\left[f, \frac{\partial f_i}{\partial x}f_j\right] + D_u \ph\cdot \frac{\partial f_i}{\partial x}f_j \dot{\uh}\\
	& = \ph\cdot\left[f, \frac{\partial f_i}{\partial x}f_j\right] + \ph\cdot\left( \frac{\partial f_i}{\partial x}\frac{\partial f_j}{\partial u} + \frac{\partial^2 f_i}{\partial x\partial u}f_j\right) \dot{\uh}.
	\end{split}
	\end{equation}
	We will make use of the following expression that comes directly from the definition of Lie brackets:
	$ \disp 
	\ph\cdot\frac{\partial f}{\partial x}f_i =\ph\cdot \frac{\partial f_i}{\partial x}f +\ph\cdot [f_i, f],\text{ for $i = 1,\dots, m$}.
	$ Clearly, the additional term $\ph\cdot[f_i, f]$ vanishes, however, as we have discussed, we cannot neglect it once we take partial derivatives
	Summing and subtracting the term $\disp \ph\cdot \frac{\partial^2 f}{\partial x^2}f_if_j$ from the expression for $\disp \ph\cdot  \left[f, \frac{\partial f_i}{\partial x}f_j\right]$ we obtain
	\begin{align*}
	\ph\cdot  \left[f, \frac{\partial f_i}{\partial x}f_j\right] &= \ph\cdot\left(\frac{\partial}{\partial x}\left(\frac{\partial f_i}{\partial x}f_j \right)f - \frac{\partial f}{\partial x}\frac{\partial f_i}{\partial x}f_j \pm \frac{\partial^2f}{\partial x^2}f_if_j\right)\\
	\if{
	&=  \ph\cdot\left(\frac{\partial}{\partial x}\left(\frac{\partial f_i}{\partial x}f_j \right)f - \frac{\partial}{\partial x}\left(\frac{\partial f}{\partial x}f_i \right)f_j + \frac{\partial^2 f}{\partial x^2}f_if_j\right)\\
	}\fi
	&= \ph\cdot\left(\frac{\partial}{\partial x}\left(\frac{\partial f_i}{\partial x}f_j \right)f - \frac{\partial}{\partial x}\left(\frac{\partial f_i}{\partial x}f + [f_i,f] \right)f_j + \frac{\partial^2 f}{\partial x^2}f_if_j\right)\\
	&= \ph\cdot\left( \frac{\partial f_i}{\partial x}[f,f_j]   + \frac{\partial}{\partial x}[f,f_i] f_j + \frac{\partial^2 f}{\partial x^2}f_if_j\right)
	\end{align*}
	Hence, from \eqref{dotS}, we have
	\begin{align*}
		\dot{S}_{ij} = \left(f_v^TH_{xx}f_v\right)_{ij} + \ph\cdot \frac{\partial f_i}{\partial x}[f,f_j]   + \ph\cdot\frac{\partial}{\partial x}[f,f_i] f_j + \ph\cdot\left( \frac{\partial f_i}{\partial x}\frac{\partial f_j}{\partial u} + \frac{\partial^2 f_i}{\partial x\partial u}f_j\right) \dot{\uh}.
	\end{align*}
	Moving on to the terms $\left(H_{vx}B\right)_{ij}$ and $\left(H_{vx}B\right)^T_{ij} = \left(H_{vx}B\right)_{ji}$, and recalling the definition of $B = f_xf_v - \frac{\dd}{\dd t}f_v$, given in \eqref{matrixB}, we obtain that the column of index $j$ for this matrix assumes the form
	$
		B_{(:,j)} = -\left([f,f_j] + \frac{\partial f}{\partial u}\dot{\uh}\right),
	$ 
	so that
	\[
		H_{vx(i,:)}B_{(:,j)} = - \ph\cdot\frac{\partial f_i}{\partial x}\left([f,f_j] + \frac{\partial f}{\partial u}\dot{\uh}\right).
	\]
	Summing all terms to get the matrix $R$, we obtain the desired identity.
\findem 

\noindent
{\em Proof of Claim \ref{claim2}.}
	To obtain an expression for $\disp \frac{\partial \dot{\uh}}{\partial v_j}$, we start by solving the equation $\dot{H}_u = 0$ for $\dot{\uh}$. We obtain 
	\if{
	\begin{align*}
		0 = \dot{H}_u &= H_{ux}\dot{x} + H_{up}\dot{p} + H_{uu}\dot{u}\\
		&= H_{ux}f -\frac{\partial f^T}{\partial u}H_x^T + H_{uu}\dot{u},
	\end{align*}
	obtaining }\fi
	\begin{equation*}
		\dot{\uh} = -H_{uu}^{-1}\left(H_{ux}f -\frac{\partial f^T}{\partial u}H_x^T\right).
	\end{equation*}
	Taking the partial derivative w.r.t. $v_j$ in the latter equation yields
	\begin{align*}
		\frac{\partial \dot{\uh}}{\partial v_j} =& -H_{uu}^{-1}\underbrace{\left(H_{ux}f_j -\frac{\partial f^T}{\partial u}H_{xv_j}^T \right) }_{ = E^T_{(:,j)}} - H_{uu}^{-1}\underbrace{\left(H_{v_jux}f -\frac{\partial f_j^T}{\partial u}H_x^T\right)}_{ = \frac{\partial}{\partial x}H_{v_ju}\dot{\xh} + \frac{\partial}{\partial p}H_{v_ju}\dot{\ph} = -H_{uuv_j}\dot{\uh}}\\
		&  -\frac{\partial H_{uu}^{-1}}{\partial v_j}\underbrace{\left(H_{ux}f_j - \ph\cdot \frac{\partial f_j}{\partial x}f_u\right)}_{ = - H_{uu}\dot{\uh}}\\
		=& -H_{uu}^{-1}E^T_{(:,j)}+ \underbrace{\left(H_{uu}^{-1}\frac{\partial H_{uu}}{\partial v_j} + \frac{\partial H_{uu}^{-1}}{\partial v_j}H_{uu} \right)}_{= \frac{\partial }{\partial v_j}H_{uu}^{-1}H_{uu} = 0 }\dot{\uh}
		= -H_{uu}^{-1}E^T_{(:,j)}.
	\end{align*}
\findem 

\noindent
{\em Proof of Claim \ref{claim3}.}
	Let us expand $D_u\left(\ph\cdot [f, f_i]\right)$:
	\begin{align*}
		D_u\left(\ph\cdot [f, f_i]\right) &= \frac{\partial}{\partial u}\left(\ph\frac{\partial f_i}{\partial x}f - \ph\frac{\partial f}{\partial x}f_i\right)\\
		&= \underbrace{\ph\cdot\frac{\partial f_i}{\partial x}\frac{\partial f}{\partial u} - f_i^TH_{xu}}_{= -E_{(i,:)}} + \underbrace{f^TH_{v_ixu} - H_x\frac{\partial f_i}{\partial u}}_{= \left(\frac{\partial H_{v_iu}}{\partial x}\dot{\xh} + \frac{\partial H_{v_iu}}{\partial p}\dot{\ph}\right)^T}= - E_{(i,:)} - \dot{\uh}^TH_{v_iuu}.
	\end{align*}
\findem 

Finally, we add the contributions of all these claims to prove Lemma \ref{goh_computations}.

\noindent{\em Proof of Lemma \ref{goh_computations}.}
It suffices to check the expression for $\partial v_j\ddot{H}_{v_i}$ in \eqref{DvddotHv}:
\begin{align*}
	\frac{\partial \ddot{H}_{v_i}}{\partial v_j} &= \underbrace{\ph\cdot[f_j, [f, f_i]] + \ph\cdot \left(
		\frac{\partial^2 f_i}{\partial x\partial u}f_j - \frac{\partial f_j}{\partial x}\frac{\partial f_i}{\partial u}
		\right)\dot{\uh}}_{-R_{ij}} + \underbrace{{\ph\cdot D_u [f,f_i] + \dot{\uh}^TH_{v_iuu}}}_{-E_{(i,:)}}\underbrace{\frac{\partial \dot{\uh}}{\partial v_j}}_{-H_{uu}^{-1}E^T_{(:,j)}}\\
	&= -\left(R - EH^{-1}_{uu}E^T\right)_{ij}.
\end{align*}
This concludes the proof. 
\findem

\subsection{}{\em Proof of Lemma \ref{LS-LQS_equivalence}}

	We must check that given a solution $(\xb, \ub, \vb, \pb, \beta)$ of (LS), the corresponding transformed variables $(\xib, \ub, \yb, \hb, \chib, \chib_h, \beta^{LQ})$ solve (LQS).
	
	Starting with the state $\xib$, we recall the dynamics of the linearized variable $\xb$ given in \eqref{statedynamics.linearized} so that one has $\dot{\xib} = \dot{\xb} - \dot{f}_v\yb - f_v\dot{\yb} = f_x\xib + f_u\ub + B\yb,$ retrieving the dynamics in \eqref{LQ.dynamics}. The initial conditions are trivially satisfied since $\yb(0) = 0$. The dynamics for $\hb$ are satisfied by the definition. 
	For the costate dynamics we recall the dynamics of the linearized costates from \eqref{costatedynamics.linearized} and the definition of the matrix $M$ in \eqref{matrixM}. We get
	\begin{align*}
	-\dot{\chib} &= - \dot{\pb} - \dot{\yb}^TH_{vx} - \yb^T\dot{H}_{vx}\\
	&= \underbrace{(\pb + \yb^TH_{vx})}_{=\chib}f_x + \underbrace{(\xb - f_v\yb)^T}_{=\xib^T}H_{xx} + \yb\underbrace{(f_v^TH_{xx} - \dot{H}_{vx} - H_{vx}f_x)}_{=M}\\
	&= \chib f_x + \xib^TH_{xx} + \yb^TM.
	\end{align*}
	Hence the dynamics of $\chib$ matches \eqref{LQ.costatedynamics}. From equation \eqref{mappingLS-LQS} we obtain $\chib(0) = \pb(0)$ and deduce \eqref{LQ.costateinicial}. For the final conditions one substitutes the expressions for $\xb(T)$ and $\pb(T)$ into \eqref{linHvT} and conclude since $S = H_{vx}f_v = f^T_vH_{vx}^T,$	which is a consequence of the Goh conditions \eqref{Goh.condition}.This way we recover the transversality condition for $\chib(T)$. 
	
	Finally we must check the stationarity \eqref{LQ.stationary_u} and \eqref{LQ.stationary_y} of the Hamiltonian for (LQS). Starting from \eqref{linHu} and \eqref{mappingLS-LQS}, we obtain
	\begin{align*}
	0 &= (\bar\chi - \yb^TH_{vx})f_u + (\bar\xi + f_v\yb)^TH_{ux}^T +   \ub^TH_{uu}\\
	&= \bar\chi f_u + \bar\xi^TH_{ux}^T + \ub^TH_{uu} + \yb^T(\underbrace{f_v^TH_{ux}^T - H_{vx}f_u}_{=E}),
	\end{align*}
	which corresponds to the stationarity with respect to $\ub$. On the other hand, the same substitutions applied to \eqref{linHv} yield $0 = \chib f_v + \xib^TH_{vx}^T.$
	Differentiating with respect to time and using the definitions of $B$ in \eqref{matrixB} and $E$ in \eqref{matrixM}, we recover the stationarity \eqref{LQ.stationary_y} with respect to $\yb$. This shows that the tuple $(\xib, \ub, \yb, \hb, \chib, \chib_h, \beta^{LQ})$ is a solution of (LQS) and concludes the proof.
\findem

\subsection{} {\em Proof of Lemma \ref{CP-TP-relation}.}
	Since $\wh$ is a Pontryagin minimum of (CP), from Definition \ref{pontryagin_minimum}, there exists $\varepsilon > 0$ such that
	\begin{equation}
	\label{ineqw}
	\norm{x - \xh}_{\infty} < \varepsilon, \ \norm{(u,v) - (\uh,\vh)}_{1} < \varepsilon, \ \norm{(u,v) - (\uh,\vh)}_{\infty} < 1.
	\end{equation}
	Let $\Wh$ be the transformation of $\wh$ through \eqref{transform.CP-TP}. We now prove that $\Wh$ is weakly optimal for (TP). Hence we search appropriate $\bar\delta, \bar\varepsilon$ for which all feasible trajectories $W = \big((x^k), (u^k), (v^k), (T_k)\big)$ of (TP) that satisfy
	\begin{equation}
	\label{weak_optimality_W}
	\left| T_k - \Th_k\right|<\bar\delta, \quad \norm{(u^k,v^k) - (\uh^k,\vh^k)}_{\infty} < \bar\varepsilon, \ \text{ for all } k = 1, \cdots, N 
	\end{equation}
	will be mapped into a neighborhood of $\wh$ where it is optimal. Such mapping $W \mapsto w$ is done as follows
	\begin{gather}
	\label{transform.TP-CP.xu}
	x(t) := x^k\left(\frac{t - T_{k-1}}{T_k - T_{k-1}}\right), \quad u(t) := u^k\left(\frac{t - T_{k-1}}{T_k - T_{k-1}}\right),\quad \text{ for $t \in I_k$},\\
	\label{transform.TP-CP.v}
	v_i(t) :=
	\left\{
	\begin{array}{cc}
	0,& \text{if $t \in I_k$ and $i\in A_k,$}\\ 
	v_i^k\left(\frac{t - T_{k-1}}{T_k - T_{k-1}}\right),& \text{if $t \in I_k$ and $i\in S_k,$}\\
	1,& \text{if $t \in I_k$ and $i\in B_k.$}\\
	\end{array}
	\right.
	\end{gather}
	
	The dynamics \eqref{state.dynamics} are clearly satisfied by $(x,u,v)$ obtained from \eqref{transform.TP-CP.xu}-\eqref{transform.TP-CP.v}. The end-point constraints in \eqref{initial-final.constraints} are also easy to verify since $x(0) = x^1(0)$ and $x(T) = x^N(1)$ along with the feasibility of $W$.
	
	The last step to check feasibility of $w$ are the control constraints. For the nonlinear controls, note that since $\norm{u^k - \uh^k}_{\infty} < \bar\varepsilon$, we have that $\norm{u - \uh}_{\infty} < \bar\varepsilon$. Recalling $\rho'$ given in \eqref{control_discontinuities}-\eqref{control_set.rho-Ball}, if we choose $\bar\varepsilon < \rho'$, then $u\left([0,T]\right) \subset U$.
	To discuss the feasibility of the linear controls, from equation \eqref{control_discontinuities}, we can choose $\bar\varepsilon$ so that, whenever $t \in I_k$ and $i \in S_k,$
	\begin{equation}
	0 < \rho' - \bar\varepsilon \le v_i(t) \le 1 -\rho' + \bar\varepsilon < 1.
	\end{equation}
	On the other hand, for $i \in A_k \cup B_k$, we know that $v_i(t) \in \{0,1\}$ in view of \eqref{transform.TP-CP.v}, so that the control constraints are still satisfied. This concludes the proof of the feasibility of $(x,u,v)$.
	
	In the sequel, we find $\bar\delta$ and $\bar\eps$ so that, if $W$ satisfies \eqref{weak_optimality_W}, then the transformed $w$ verifies \eqref{ineqw} for the given $\eps.$ The analysis is analogous for both controls $u$ and $v$, hence we will conduct the calculations only for $u$. We have
	{\small
		\begin{equation}
		\begin{array}{cc}
		\disp \int_{I_k\cap \Ih_k} |u_i(t) - \uh_i(t)|\dd t & \disp \le
		\int_{I_k\cap \Ih_k} \left| u_i^k\left(\frac{t - T_{k-1}}{T_k - T_{k-1}}\right) - \uh_i^k\left(\frac{t - T_{k-1}}{T_k - T_{k-1}}\right)\right|\dd t \\
		
		& \disp + \int_{I_k\cap \Ih_k} \left| \uh_i^k\left(\frac{t - T_{k-1}}{T_k - T_{k-1}}\right) - \uh_i^k\left(\frac{t - \Th_{k-1}}{\Th_k - \Th_{k-1}}\right)\right|\dd t.
		\end{array}
		\end{equation}}
	The first integral in the r.h.s. of latter display is bounded by $\bar\varepsilon|I_k\cap \Ih_k|$ in view of \eqref{weak_optimality_W}. For the second term, recall that $\uh$ is continuous on $[0,T]$ and so are the components of $\uh^k$ over $\Ih_k$, so that they are uniformly continuous over these intervals. Therefore, for each $k = 1, \cdots, N,$ we can find some $\bar\delta_k>0$ such that, if $|T_k - \Th_k| < \bar\delta_k,$ then
	\begin{equation*}
	\left| \uh_i^k\left(\frac{t - T_{k-1}}{T_k - T_{k-1}}\right) - \uh_i^k\left(\frac{t - \Th_{k-1}}{\Th_k - \Th_{k-1}}\right) \right| < \bar\varepsilon
	\end{equation*}
	for every component of $\uh^k$. Hence we only need to choose $\disp \bar\delta:=\min_{k = 1, \cdots, N} \bar\delta_k$. We proved that
	\be
	\label{estimate_u_1}
	\disp \int_{I_k\cap \Ih_k} |u_i(t) - \uh_i(t)|\dd t  \leq 2\bar\varepsilon|I_k\cap \Ih_k|.
	\ee
	Next, we need to estimate the integral outside the intersection $I_k\cap \Ih_k.$ We assume w.l.o.g. that $T_k < \Th_k$ hence, in view of \eqref{weak_optimality_W},
	\begin{equation}
	\label{estimate_u_2}
	\int_{T_k}^{\Th_k}|u_i(t) - \uh_i(t)|\dd t \le \bar\delta\bar\varepsilon.
	\end{equation}
	Adding up all the terms, we get from \eqref{estimate_u_1}-\eqref{estimate_u_2}, that
	$$\norm{u_i - \uh_i}_1 < \bar\varepsilon(2T + (N-1)\bar\delta).
	$$ 
	An analogous estimate can be obtained for $\|v-\vh\|_1.$ Finally, taking into account all the control components  $m$ of the linear controls and $l$ from the nonlinear controls, we get that, if
	\begin{equation*}
	\bar\varepsilon(2T + (N-1)\bar\delta) < \frac{\varepsilon}{m + l},
	\end{equation*}
	then $\norm{u - \uh}_1 < \varepsilon$, as desired.
	
\findem


\bibliographystyle{plain}
\bibliography{mixed_controls_bib}

\begin{thebibliography}{10}

\bibitem{aronna2013convergence}
M.S. Aronna.
\newblock Convergence of the shooting algorithm for singular optimal control
  problems.
\newblock In {\em Proceedings of the IEEE European Control Conference (ECC)},
  pages 215--220, July 2013.

\bibitem{Aronna2018}
M.S. Aronna.
\newblock Second order necessary and sufficient optimality conditions for
  singular solutions of partially-affine control problems.
\newblock {\em Discrete Contin. Dyn. Syst. - S}, 11(6):1179--1199, 2018.

\bibitem{ABDL12}
M.S. Aronna, J.F. Bonnans, A.V. Dmitruk, and P.A. Lotito.
\newblock Quadratic order conditions for bang-singular extremals.
\newblock {\em Numer. {A}lgebra, {C}ontrol {O}ptim., AIMS Journal, special
  issue dedicated to {P}rofessor {H}elmut {M}aurer on the occasion of his 65th
  birthday}, 2(3):511--546, 2012.

\bibitem{aronna2013shooting}
M.S. Aronna, J.F. Bonnans, and P.~Martinon.
\newblock A shooting algorithm for optimal control problems with singular arcs.
\newblock {\em Journal of Optimization Theory and Applications},
  158(2):419--459, 2013.

\bibitem{Azi05}
D.M. Azimov.
\newblock Active sections of rocket trajectories. {A} survey of research.
\newblock {\em Avtomat. i Telemekh.}, (11):14--34, 2005.

\bibitem{behncke2000optimal}
H.~Behncke.
\newblock Optimal control of deterministic epidemics.
\newblock {\em Optimal control applications and methods}, 21(6):269--285, 2000.

\bibitem{bell1993optimality}
D.J. Bell.
\newblock Optimality conditions at junctions of singular and nonsingular
  controls.
\newblock {\em Journal of optimization theory and applications}, 78(1):1--8,
  1993.

\bibitem{BelJac}
D.J. Bell and D.H. Jacobson.
\newblock {\em Singular Optimal Control Problems}.
\newblock Academic Press, 1975.

\bibitem{bonnans2006numerical}
J.F. Bonnans, J.C. Gilbert, C.~Lemar{\'e}chal, and C.A. Sagastiz{\'a}bal.
\newblock {\em Numerical optimization: theoretical and practical aspects}.
\newblock Springer Science \& Business Media, 2006.

\bibitem{bonnans2012bocop}
J.F. Bonnans, P.~Martinon, and V.~Gr{\'e}lard.
\newblock Bocop-a collection of examples.
\newblock 2012.

\bibitem{BPT02}
H.J. Bortolossi, M.V. Pereira, and C.~Tomei.
\newblock Optimal hydrothermal scheduling with variable production coefficient.
\newblock {\em Math. Methods Oper. Res.}, 55(1):11--36, 2002.

\bibitem{BrysonHo75}
A.E. Bryson and Y.-C. Ho.
\newblock {\em Applied optimal control}.
\newblock Hemisphere Publishing, New-York, 1975.

\bibitem{certiksymengine}
O.~Cert{\i}k, I.~Fernando, S.~Garg, T.~Rathnayake, et~al.
\newblock Symengine: A fast symbolic manipulation library, 2020.

\bibitem{ChoAbaPar93}
D.I. Cho, P.L. Abad, and M.~Parlar.
\newblock Optimal production and maintenance decisions when a system experience
  age-dependent deterioration.
\newblock {\em Optimal Control Appl. Methods}, 14(3):153--167, 1993.

\bibitem{Dmi77}
A.V. Dmitruk.
\newblock Quadratic conditions for a weak minimum for singular regimes in
  optimal control problems.
\newblock {\em Soviet Math. Doklady}, 18(2), 1977.

\bibitem{dmitruk2010analysis}
A.V. Dmitruk and K.K. Shishov.
\newblock Analysis of a quadratic functional with a partly degenerate legendre
  condition.
\newblock {\em Moscow University Computational Mathematics and Cybernetics},
  34(2):56--65, 2010.

\bibitem{fletcher2013practical}
R.~Fletcher.
\newblock {\em Practical methods of optimization}.
\newblock John Wiley \& Sons, 2013.

\bibitem{frankowska2013pointwise}
H.~Frankowska and D.~Tonon.
\newblock Pointwise second-order necessary optimality conditions for the mayer
  problem with control constraints.
\newblock {\em SIAM Journal on Control and Optimization}, 51(5):3814--3843,
  2013.

\bibitem{gaff2009optimal}
H.~Gaff and E.~Schaefer.
\newblock Optimal control applied to vaccination and treatment strategies for
  various epidemiological models.
\newblock {\em Math. Biosci. Eng}, 6(3):469--492, 2009.

\bibitem{Goddard}
R.H. Goddard.
\newblock {\em A Method of Reaching Extreme Altitudes}, volume 71(2) of {\em
  Smithsonian Miscellaneous Collections}.
\newblock Smithsonian institution, City of Washington, 1919.

\bibitem{Goh66}
B.S. Goh.
\newblock Necessary conditions for singular extremals involving multiple
  control variables.
\newblock {\em J. SIAM Control}, 4:716--731, 1966.

\bibitem{GohThesis}
B.S. Goh.
\newblock {\em Necessary Conditions for the Singular Extremals in the Calculus
  of Variations}.
\newblock PhD thesis, University of Canterbury, 1966.

\bibitem{goh1966second}
B.S. Goh.
\newblock The second variation for the singular {B}olza problem.
\newblock {\em J. SIAM Control}, 4(2):309--325, 1966.

\bibitem{goh2008optimal}
B.S. Goh.
\newblock Optimal singular rocket and aircraft trajectories.
\newblock In {\em 2008 Chinese Control and Decision Conference}, pages
  1531--1536. IEEE, 2008.

\bibitem{hager2000runge}
W.W. Hager.
\newblock Runge-kutta methods in optimal control and the transformed adjoint
  system.
\newblock {\em Numerische Mathematik}, 87(2):247--282, 2000.

\bibitem{Hul11}
D.G. Hull.
\newblock Optimal guidance for quasi-planar lunar ascent.
\newblock {\em J. Optim. Theory Appl.}, 151(2):353--372, 2011.

\bibitem{Law63}
D.F. Lawden.
\newblock {\em Optimal trajectories for space navigation}.
\newblock Butterworths, London, 1963.

\bibitem{ledzewicz2011optimal}
U.~Ledzewicz and H.~Sch{\"a}ttler.
\newblock On optimal singular controls for a general sir-model with vaccination
  and treatment.
\newblock {\em Discrete and continuous dynamical systems}, 2:981--990, 2011.

\bibitem{schattler2012geometric}
U.~Ledzewicz and H.~Sch{\"a}ttler.
\newblock {\em Geometric optimal control: theory, methods and examples},
  volume~38.
\newblock Springer Science \& Business Media, 2012.

\bibitem{LEDSCH2014}
U.~Ledzewicz and H.~Sch\"{a}ttler.
\newblock Optimal control for mathematical models of cancer treatment.
\newblock {\em Mathematical Oncology}, 2014.
\newblock to appear.

\bibitem{martinon2009numerical}
P.~Martinon, J.F. Bonnans, J.~Laurent-Varin, and E.~Tr{\'e}lat.
\newblock Numerical study of optimal trajectories with singular arcs for an
  ariane 5 launcher.
\newblock {\em Journal of Guidance, Control, and Dynamics}, 32(1):51--55, 2009.

\bibitem{Mau76}
H.~Maurer.
\newblock Numerical solution of singular control problems using multiple
  shooting techniques.
\newblock {\em J. Optim. Theory Appl.}, 18(2):235--257, 1976.

\bibitem{MauKimVos05}
H.~Maurer, J.-H. Kim, and G.~Vossen.
\newblock {\em On A State-Constrained Control Problem in Optimal Production and
  Maintenance}, pages 289--308.
\newblock Springer, 2005.

\bibitem{mcdanell1971necessary}
J.P. McDanell and W.F. Powers.
\newblock Necessary conditions joining optimal singular and nonsingular
  subarcs.
\newblock {\em SIAM Journal on Control}, 9(2):161--173, 1971.

\bibitem{MR1641590}
A.A. Milyutin and N.P. Osmolovskii.
\newblock {\em Calculus of variations and optimal control}, volume 180 of {\em
  Translations of Mathematical Monographs}.
\newblock American Mathematical Society, Providence, RI, 1998.
\newblock Translated from the Russian manuscript by Dimitrii Chibisov.

\bibitem{OberleThesis}
H.J. Oberle.
\newblock Numerische {B}ehandlung singul{\"a}rer {S}teuerungen mit der
  {M}ehrzielmethode am {B}eispiel der {K}limatisierung von {S}onnenh{\"a}usern.
\newblock {\em PhD thesis. {T}echnische {U}niversit{\"a}t {M}{\"u}nchen}, 1977.

\bibitem{Obe77}
H.J. Oberle.
\newblock On the numerical computation of minimum-fuel, {E}arth-{M}ars
  transfer.
\newblock {\em J. Optim. Theory Appl.}, 22(3):447--453, 1977.

\bibitem{oberle90}
H.J. Oberle.
\newblock Numerical computation of singular control functions in trajectory
  optimization problems.
\newblock {\em J. Guidance Control Dynam.}, 13(1):153--159, 1990.

\bibitem{ObeTau97}
H.J. Oberle and K.~Taubert.
\newblock Existence and multiple solutions of the minimum-fuel orbit transfer
  problem.
\newblock {\em J. Optim. Theory Appl.}, 95(2):243--262, 1997.

\bibitem{pontryagin1962mathematical}
L.S. Pontryagin, V.G. Boltyanskii, R.V. Gamkrelidze, and E.~Mishchenko.
\newblock The mathematical theory of optimal processes (international series of
  monographs in pure and applied mathematics).
\newblock {\em Interscience, New York}, 1962.

\bibitem{stoer2013introduction}
J.~Stoer and R.~Bulirsch.
\newblock {\em Introduction to numerical analysis}, volume~12.
\newblock Springer Science \& Business Media, 2013.

\bibitem{vinter2010optimal}
R.~Vinter.
\newblock {\em Optimal control}.
\newblock Springer Science \& Business Media, 2010.

\end{thebibliography}


\end{document}